\newtheorem{theorem}{Theorem}[section]
\newtheorem{lemma}[theorem]{Lemma}
\newtheorem{corollary}[theorem]{Corollary}
\theoremstyle{definition}
\newtheorem{prop}[theorem]{Proposition}
\theoremstyle{remark}
\newtheorem{remark}[theorem]{Remark}
\numberwithin{equation}{section}
\def\intslash{\rlap{\kern  .32em $\mspace {.5mu}\backslash$ }\int}
\def\qsl{{\rlap{\kern  .32em $\mspace {.5mu}\backslash$ }\int_{Q_x}}}
\def\R{{\mathbb R}}
\def\Z{{\mathbb Z}}
\def\M{{\mathcal M}}
\def\C{{\mathcal C}}
\def\F{{\mathcal F}}
\def\S{\mathbf S}
\def\N{\mathbb N}
\def\emph#1{{\it #1 }}
\def\pari{\partial}
\def\eg{{\it e.g. }}
\def\supp{{\text{\rm supp}}}
\def\inn#1#2{\langle#1,#2\rangle}
\def\rta{\rightarrow}
\def\card{\text{\rm card}}
\def\lc{\lesssim}
\def\pv{\text{\rm p.v.}}
\def\alp{\alpha}
\def\del{\delta}             
\def\eps{\varepsilon}
\def\tet{\theta}
\def\lam{\lambda}            \def\Lam{\Lambda}
\def\si{\sigma}
              \def\Om{\Omega}
\def\fr{\frac}
\newcommand{\Be}{\begin{equation}}
\newcommand{\Ee}{\end{equation}}
\newcommand{\Bes}{\begin{equation*}}
\newcommand{\Ees}{\end{equation*}}
\newcommand{\Bsp}{\begin{split}}
\newcommand{\Esp}{\end{split}}
\newcommand{\Bm}{\begin{multline}}
\newcommand{\Em}{\end{multline}}
\newcommand{\Bea}{\begin{eqnarray}}
\newcommand{\Eea}{\end{eqnarray}}
\newcommand{\Beas}{\begin{eqnarray*}}
\newcommand{\Eeas}{\end{eqnarray*}}
\newcommand{\Benu}{\begin{enumerate}}
\newcommand{\Eenu}{\end{enumerate}}
\newcommand{\Bi}{\begin{itemize}}
\newcommand{\Ei}{\end{itemize}}
\begin{document}

\title[Multilinear estimates for commutator]{Multilinear estimates for Calder\'on commutators}

\author{Xudong Lai}
\address{Xudong Lai: Institute for Advanced Study in Mathematics, Harbin Institute of Technology, Harbin, 150001, People's Republic of China}
\email{xudonglai@hit.edu.cn\ xudonglai@mail.bnu.edu.cn}
\thanks{The work is supported by China Postdoctoral Science Foundation (No. 2017M621253, No. 2018T110279) and the Fundamental Research Funds for the Central Universities.}

\subjclass[2010]{42B20}

\date{\today}


\keywords{Multilinear, endpoint, Calder\'on commutator, rough kernel}

\begin{abstract}
In this paper, we investigate the multilinear boundedness properties of the higher ($n$-th) order Calder\'on commutator for dimensions larger than two.
We establish all multilinear endpoint estimates for the target space $L^{\frac{d}{d+n},\infty}(\mathbb{R}^d)$, including that Calder\'on commutator maps the product of Lorentz spaces  $L^{d,1}(\mathbb{R}^d)\times\cdots\times L^{d,1}(\mathbb{R}^d)\times L^1(\mathbb{R}^d)$ to $L^{\frac{d}{d+n},\infty}(\mathbb{R}^d)$,
which is the higher dimensional nontrivial generalization of the endpoint estimate that the $n$-th order Calder\'on commutator maps $L^{1}(\mathbb{R})\times\cdots\times L^{1}(\mathbb{R})\times L^1(\mathbb{R})$ to $L^{\frac{1}{1+n},\infty}(\mathbb{R})$.
When considering the target space $L^{r}(\mathbb{R}^d)$ with $r<\frac{d}{d+n}$, some counterexamples are given to show that these multilinear estimates may not hold.
The method in the present paper seems to have a wide range of applications and it can be applied to establish the similar results for Calder\'on commutator with a rough homogeneous kernel.
\end{abstract}

\maketitle

\section{Introduction}
The study of multilinear Calder\'on-Zygmund operators was initiated by Coifman and Meyer (see \cite{CM75}, \cite{CM78}, \cite{MC97}). One of their motivations is to study the second order Calder\'on commutator (see \cite{CM75}).
Now a fruitful theory has grown around the multilinear Calder\'on-Zygmund operator and there are still many works on going, we refer to see the very nice exposition \cite[Chapter 7]{Gra250} and the references therein.
Despite of the intensive research of the multilinear Calder\'on-Zygmund theory, there are still some open problems related to Calder\'on commutators, the original model of multilinear Calder\'on-Zygmund operators. For example, there are no appropriate multilinear endpoint estimates of the higher order Calder\'on commutator for higher dimensions.

In this paper, we investigate the multilinear boundedness properties of the higher ($n$-th) order Calder\'on commutator for dimensions larger than two.
We establish all multilinear endpoint estimates for the target space $L^{\fr{d}{d+n},\infty}(\R^d)$, in which the endpoint estimates exist on a plane $\fr{1}{q_1}+\cdots+\fr{1}{q_n}+\fr{1}{p}=\fr{d+n}{d}$ with $(\fr{1}{q_1},\cdots,\fr{1}{q_n},\fr{1}{p})\in\R^{n+1}$ intersects $[0,1]^{n+1}$. Specially, these endpoint estimates include that Calder\'on commutator maps the product of Lorentz spaces  $L^{d,1}(\R^d)\times\cdots\times L^{d,1}(\R^d)\times L^1(\R^d)$ to $L^{\fr{d}{d+n},\infty}(\R^d)$,
which is the higher dimensional nontrivial generalization of the endpoint estimate that the $n$-th order Calder\'on commutator maps $L^{1}(\R)\times\cdots\times L^{1}(\R)\times L^1(\R)$ to $L^{\fr{1}{1+n},\infty}(\R)$. If the dimension $d=1$, the above endpoint estimates for the $n$-th order commutators on products of $L^1(\R)$ spaces have been obtained by C. P. Calder\'on \cite{CCal75} when $n=1$, by Coifman and Meyer \cite{CM75} when $n=1,2$ and by Duong, Grafakos and Yan \cite{DGY10} when $n\geq 1$. However when the dimension $d\geq 2$, things become more complicated since Calder\'on commutator in this case is a non standard multilinear Calder\'on-Zygmund operator. No appropriate multilinear Calder\'on-Zygmund theory can be applied to it directly.
Therefore it is interesting to establish the multilinear estimates of Cader\'on commutator for $d\geq2$ and the purpose of the present paper is to develop the theory in this respect.

Before stating our results, we give some notation and the background.
Define the higher ($n$-th) order Calder\'on commutator by
\Be\label{e:12com}
\mathcal{C}[\nabla A_1,\cdots,\nabla A_n,f](x)=\pv \int_{\R^d} K(x-y)\Big(\prod_{i=1}^n\fr{A_{i}(x)-A_i(y)}{|x-y|}\Big)\cdot f(y)dy,
\Ee
where $n$ is a positive integer and $K$ is the Calder\'on-Zygmund convolution kernel on $\mathbb{R}^d\setminus\{0\}\, (d\ge2)$ which means that $K$ satisfies the following three conditions:
\begin{equation}\label{e:12kb}
|K(x)|\lc|x|^{-d},
\end{equation}
\begin{equation}\label{e:12K_2}
\int_{R<|x|<2R}K(x)\Big(\fr{x}{|x|}\Big)^{\alp}dx=0, \ \forall R>0,\ \forall \alp\in\Z_+^d\ \text{with  $|\alp|=n$},
\end{equation}
\begin{equation}\label{e:12kr}
|K(x-y)-K(x)|\lc\frac{|y|^\del}{|x|^{d+\del}}\ \ \text{for some $0<\del\leq1$ if}\ \ |x|>2|y|.
\end{equation}

Such kind of commutator was first introduced by A. P. Calder\'on in \cite{Cal65} for the first order with $K(x)$ a homogeneous kernel
and also later in \cite{Cal77} \cite{Cal78} for the higher order one (see also \cite{CM75}, \cite{CM78}).
It is easy to see that $\mathcal{C}[\nabla A_1,\cdots,\nabla A_n,f](x)$ is well defined for $A_1$, $\cdots$, $A_n$, $f\in C_c^\infty(\mathbb{R}^d)$.
For its applications, let us look at the first order Calder\'on commutator \eqref{e:12com}.
Indeed $\mathcal{C}[\nabla A,f](x)$ is a generalization of
\Be\label{e:12comhilb}
\begin{split}
[A, S]f(x)=A(x)S(f)(x)-S(Af)(x)=
-\pv\fr{1}{\pi}\int_{\R}\fr{1}{x-y}\fr{A(x)-A(y)}{x-y}f(y)dy
\end{split}
\Ee
where $S=\fr{d}{dx}\circ H$ and $H$ denotes the Hilbert transform (one can deduce $S=\fr{d}{dx}\circ H$ just by taking a derivation into the kernel $\fr{1}{\pi x}$ or utilizing the Fourier transform for both sides).
It is well known that the commutator $[A, S]$ is a fundamental operator in harmonic analysis and plays an important role in the theory of the Cauchy integral along Lipschitz curve in $\mathbb{C}$, the boundary value problem of elliptic equation on non-smooth domain, and the Kato square root problem on $\R$ (see \eg \cite{Cal65}, \cite{Cal78}, \cite{Fef74}, \cite{MC97}, \cite{Gra250} for the details). Recently, there has been a renewed interest into the commutator $[A,S]$ and {\it d-commutator} introduced by M. Christ and J. Journ\'e (see \cite{CJ87}) since they have applications in the mixing flow problem (see \eg \cite{SSS15}, \cite{HSSS17}).

In this paper, we are interested in the following strong type multilinear estimate (or weak type estimate)
\Be\label{e:12mmulti}
\|\mathcal{C}[\nabla A_1,\cdots,\nabla A_n,f]\|_{L^r(\R^d)}\lc \Big(\prod_{i=1}^n\|\nabla A_i\|_{L^{q_i}(\R^d)}\Big)\|f\|_{L^p(\R^d)}
\Ee
where $\fr{1}{r}=\big(\sum_{i=1}^n\fr{1}{q_i}\big)+\fr{1}{p}$ with $1\leq q_i\leq\infty$, $(i=1,\cdots,n)$ and $1\leq p\leq\infty$. Our main results are as follows.

\begin{theorem}\label{t:12}
Let $d\geq2$ and $n$ be a positive integer. Suppose $K$ satisfies $(\ref{e:12kb}), (\ref{e:12K_2})$
and $(\ref{e:12kr})$.
Assume that  $\fr{1}{r}=\big(\sum_{i=1}^n\fr{1}{q_i}\big)+\fr{1}{p}$ with $1\leq q_i\leq\infty$ $(i=1,\cdots,n)$,  and $1\leq p\leq\infty$. We have the following conclusions:

{\rm (i).} If $\fr{d}{d+n}< r<\infty$, $1<q_i\leq\infty$ $(i=1,\cdots,n)$ and $1<p\leq\infty$, then the multilinear estimate \eqref{e:12mmulti} holds.

{\rm (ii).} If $\fr{d}{d+n}\leq r<\infty$ with $q_i=1$ for some $i=1,\cdots,n$; or $p=1$; or $r=\fr{d}{d+n}$, then there exists a constant $C>0$ such that
\Be\label{e:12muweak}
\|\mathcal{C}[\nabla A_1,\cdots,\nabla A_n,f]\|_{L^{r,\infty}(\R^d)}\leq C \Big(\prod_{i=1}^n\|\nabla A_i\|_{L^{q_i}(\R^d)}\Big)\|f\|_{L^p(\R^d)}
\Ee
and in this case, if $q_i=d$ for some $i=1,\cdots,n$, $L^{q_i}(\R^d)$ in the above inequality should be replaced by $L^{d,1}(\R^d)$, the standard Lorentz space. Specially, we have the following endpoint estimate
\Be\label{e:12mulendpoint}
\|\mathcal{C}[\nabla A_1,\cdots,\nabla A_n,f]\|_{L^{\fr{d}{d+n},\infty}(\R^d)}\leq C \Big(\prod_{i=1}^n\|\nabla A_i\|_{L^{d,1}(\R^d)}\Big)\|f\|_{L^1(\R^d)}.
\Ee

{\rm(iii).} If $0<r<\fr{d}{d+n}$, $1\leq q_i\leq\infty$ $(i=1,\cdots,n)$ and $1\leq p\leq\infty$, there exist functions $A_i$ for $i=1,\cdots,n$, and $f$ such that $\|\nabla A_i\|_{L^{q_i}(\R^d)}<\infty$ for $i=1,\cdots,n$, and $\|f\|_{L^p(\R^d)}<\infty$. But
\Bes
\begin{split}
\mathcal{C}[\nabla A_1,\cdots,\nabla A_n,f](x)=\infty\ \ \text{in a ball in $\R^d$.}
\end{split}
\Ees
\end{theorem}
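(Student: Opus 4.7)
The three parts of Theorem \ref{t:12} are addressed in the order (ii), (i), (iii). The core new content is the endpoint weak estimate \eqref{e:12mulendpoint}; from it, the remaining weak bounds in (ii) follow by scaling and by symmetrization among the factors $\nabla A_i$, and the open strong-type region of (i) is then obtained by multilinear Marcinkiewicz interpolation against a seed strong estimate such as the classical $L^2\times L^\infty\times\cdots\times L^\infty\to L^2$ bound for the Calder\'on commutator, valid under \eqref{e:12kb}--\eqref{e:12kr}. Part (iii) is then settled by an explicit power-type counterexample.

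\textbf{The endpoint \eqref{e:12mulendpoint} via Calder\'on--Zygmund decomposition.} Normalize $\|\nabla A_i\|_{L^{d,1}(\R^d)}=1$ and $\|f\|_{L^1(\R^d)}=1$, and aim, for each $\lambda>0$, at $|\{|\mathcal{C}[\nabla\vec A,f]|>\lambda\}|\lc\lambda^{-d/(d+n)}$. Decompose $f$ via the Calder\'on--Zygmund lemma at height $\alpha\sim\lambda^{d/(d+n)}$, giving $f=g+\sum_Q b_Q$ with $\|g\|_\infty\lc\alpha$, $\int b_Q=0$, $\supp b_Q\seq Q$, $\|b_Q\|_1\lc\alpha|Q|$ and $\sum_Q|Q|\lc\lambda^{-d/(d+n)}$. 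Dispose of $g$ by a strong $L^{p_0}$ bound for $\mathcal{C}$ with some $p_0$ slightly above $d/(d+n)$ (available independently of the endpoint by interpolating the seed $L^2$-bound with trivial $L^\infty$-type estimates), combined with $\|g\|_{p_0}^{p_0}\lc\alpha^{p_0-1}$. Setting $E^\ast=\bigcup_Q 2Q$, of measure $\lc\lambda^{-d/(d+n)}$, and applying Chebyshev, it remains to show
\Bes
\int_{(E^\ast)^c}\bigl|\mathcal{C}[\nabla A_1,\dots,\nabla A_n,b](x)\bigr|\,dx\lc\lambda^{n/(d+n)}.
\Ees

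\textbf{The bad-part estimate.} On $(2Q)^c$ use the telescoping identity
\Bes
\prod_{i=1}^n\bigl(A_i(x)-A_i(y)\bigr)=\sum_{S\seq\{1,\dots,n\}}(-1)^{|S|}\prod_{i\in S}\bigl(A_i(y)-A_i(y_Q)\bigr)\prod_{i\notin S}\bigl(A_i(x)-A_i(y_Q)\bigr),
\Ees
with $y_Q$ the center of $Q$. The term $S=\emptyset$ is independent of $y$; cancelled against $\int b_Q=0$ together with the H\"older regularity \eqref{e:12kr}, it loses a factor $\ell(Q)^\delta/|x-y_Q|^{d+n+\delta}$ on the kernel side, whose pairing with the polynomial-in-$(x-y_Q)$ growth of $\prod_i(A_i(x)-A_i(y_Q))$ is absorbed by the $n$-th order vanishing-moment condition \eqref{e:12K_2}. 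For $|S|\geq1$ the factors $|A_i(y)-A_i(y_Q)|$ (with $y\in Q$) are controlled by the dyadically $\ell^1$-summable modulus of continuity of $A_i$ provided by $\nabla A_i\in L^{d,1}(\R^d)$; this is precisely why $L^{d,1}$, and not $L^d$, is the correct endpoint space. Summing the geometric tails over dyadic annuli $|x-y_Q|\sim 2^k\ell(Q)$ and then over $Q$ produces the required $\lc\lambda^{n/(d+n)}$.

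\textbf{Counterexamples and main obstacle.} For (iii), on the unit ball take $\nabla A_i(x)=|x|^{-d/q_i}(\log(2/|x|))^{-1/q_i-\eps}\chi_{\{|x|\leq1\}}$ and $f(x)=|x|^{-d/p}(\log(2/|x|))^{-1/p-\eps}\chi_{\{|x|\leq1\}}$ (with obvious modifications when $q_i$ or $p$ equals $1$ or $\infty$); each lies in the stated Lebesgue space, and a direct scaling computation of the integrand in \eqref{e:12com} near $y=0$ produces, up to logarithms, a power behavior of order $|y|^{-d-n+d(\sum 1/q_i+1/p)}$ that fails to be integrable exactly when $r<d/(d+n)$, forcing $\mathcal{C}[\nabla\vec A,f]\equiv+\infty$ on a ball around the origin. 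The central obstacle throughout is the bad-part step: $\mathcal{C}$ is \emph{not} a standard multilinear Calder\'on--Zygmund form in $(\nabla A_1,\dots,\nabla A_n,f)$ because the $A_i$ enter through differences rather than pointwise, and the precise interlocking of the telescoping order, the order-$n$ vanishing moment of $K$, and the Lorentz continuity modulus furnished by $L^{d,1}$ is what forces $d/(d+n)$ as the sharp target exponent.
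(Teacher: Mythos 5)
Your proposal differs structurally from the paper's argument, and the divergence is not cosmetic: the paper never performs a Calder\'on--Zygmund decomposition of $f$. Instead, all the hard work goes into constructing an \emph{exceptional set} $G_\lambda$ built from the functions $A_i$ --- via the Mary Weiss maximal operator $\M$ (and its new $L^{d,1}\to L^{d,\infty}$ bound in Lemma \ref{l:11md}), the fractional maximal operator $\mathfrak M_{s_i}$ of Lemma \ref{l:12qleqd}, and a Calder\'on--Zygmund decomposition of $\nabla A_i$ when $q_i<d$ --- so that on $(G_\lambda)^c$ each $A_i$ is genuinely Lipschitz with constant $\lambda^{r/q_i}$. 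The splitting $f=f_1+f_2$ is then dictated by $G_\lambda$ (it is $f\chi_{(G_\lambda)^c}$ and $f\chi_{G_\lambda}$), with the Whitney cubes of $G_\lambda$ giving the geometry, and the telescoping $I+II+III(+IV)$ in \eqref{e:12axyqbigd2}, \eqref{e:12axyqbigd3} is controlled precisely because the Lipschitz extensions $\tilde A_i$ and the maximal functions evaluated at the Whitney centers $y_k\in(G_\lambda)^c$ are all $\lesssim\lambda^{r/q_i}$.

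Your bad-part step is where the proof breaks. You decompose $f=g+\sum_Q b_Q$ and then invoke a telescoping identity with base point $y_Q$, but the cubes $Q$ and their centers $y_Q$ are determined by the distribution of $f$ and carry no information about the $A_i$. Consequently the factors $|A_i(y)-A_i(y_Q)|$ for $y\in Q$ and $|A_i(x)-A_i(y_Q)|/|x-y_Q|$ for $x\notin 2Q$ are controlled by $\M(\nabla A_i)(y_Q)$ or $\M(\nabla A_i)(x)$, neither of which is bounded: with $\nabla A_i$ merely in $L^{d,1}$, the Mary Weiss maximal function can be arbitrarily large at a $y_Q$ that happens to be a density point of large values of $\nabla A_i$. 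Your phrase ``dyadically $\ell^1$-summable modulus of continuity provided by $\nabla A_i\in L^{d,1}$'' does not name a mechanism that produces a pointwise Lipschitz bound; what $L^{d,1}$ actually buys is a \emph{weak-type maximal inequality} for $\M$, and turning that into usable pointwise control is exactly what the exceptional-set construction accomplishes. Without that construction, the sum over $Q$ does not close.

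The good-part step is also problematic. Disposing of $g$ requires a strong-type bound $\|\mathcal C[\nabla A_1,\dots,\nabla A_n,g]\|_{L^{r_0}}\lesssim\prod_i\|\nabla A_i\|_{L^{d,1}}\|g\|_{L^{p_0}}$ for some $r_0$ slightly above $d/(d+n)$, hence $r_0<1$. But Proposition \ref{p:12strongr} (the Seeger--Smart--Street input) only gives the range $r\geq1$, and extending it to $d/(d+n)<r<1$ is precisely what the paper obtains \emph{after} the endpoint weak estimate, by multilinear interpolation. Using a sub-$1$ strong bound here is circular. The workaround of staying at $r_0\geq1$ with $q_i=d$ requires $n/d+1/p_0\leq1$, i.e.\ $p_0\geq d/(d-n)$, which is impossible whenever $n\geq d$. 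So even the easy half of your decomposition fails in general. The paper sidesteps all of this by handling $f_1$ on $(G_\lambda)^c$, where the $A_i$ are Lipschitz and the $q_i=\infty$ estimates (strong and weak $L^1$, Propositions \ref{p:12strongr}, \ref{p:12inftyweak}) apply directly.

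Two smaller points. Your moment-cancellation story conflates two roles: \eqref{e:12K_2} makes $K(x)(x/|x|)^\alpha$ a standard CZ kernel and is used once and for all in reducing to \cite{SSS15}; it does not ``absorb polynomial growth of $\prod_i(A_i(x)-A_i(y_Q))$'' --- no such growth appears in the paper because the $\tilde A_i$ are bounded-slope. And the counterexample as you describe it (radial $A_i$, $f$ singular at the origin, observation point in a small ball) has a sign-cancellation problem: $A_i(x)-A_i(y)$ changes sign as $y$ moves around $x$, so divergence of the principal-value integral is not evident. The paper's Proposition \ref{p:12exa} fixes this by supporting $A_i$ and $f$ in a cone and placing the observation point $x$ just \emph{outside} the cone, so $A_i(x)=0$, $|x-y|$ is bounded below and above, and the integrand has a single sign; the divergence is then purely the non-integrability of $\prod_i A_i(y)\cdot f(y)\sim|y|^{-d}$ at the vertex. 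If you adapt your power-law profiles to a cone in this way you recover the paper's example.
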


\begin{remark}
Notice that (i) gives strong type estimates \eqref{e:12mmulti} for $\fr{d}{d+n}< r<\infty$. (ii) gives all endpoint estimates for $\fr{d}{d+n}\leq r\leq1$, especially the case $r=\fr{d}{d+n}$ where the endpoints $\big(\fr{1}{q_1},\cdots,\fr{1}{q_n},\fr{1}{p}\big)$ exist in the intersection between the plane $\big(\sum_{i=1}^n\fr{1}{q_i}\big)+\fr{1}{p}=\fr{d+n}{d}$ and $[0,1]^{n+1}$, which is the most difficult part in our proof.
Here we point out that the condition $r\geq\fr{d}{d+n}$ is crucial in the proof of (i) and (ii) in Theorem \ref{t:12}, which will be emphasized further in the proof where we use this condition. Our basic strategy is first to show \eqref{e:12mmulti} for $1\leq r<\infty$ and (ii), then use the multilinear interpolation between \eqref{e:12mmulti} for $1\leq r<\infty$ and the result of (ii), to justify the rest part of \eqref{e:12mmulti} for $\fr{d}{d+n}<r<1$. All those will be clear in our proof.
Obviously, the conclusion of (iii) indicates that the requirement $r\geq\fr{d}{d+n}$ is a necessary condition to guarantee the strong type estimates (or weak type estimates) \eqref{e:12mmulti} hold,
thus our results in Theorem \ref{t:12} are optimal in this sense.
Some counterexamples will be constructed to prove conclusion (iii).
\end{remark}

\begin{remark}
Notice that $L^{1,1}(\R)=L^1(\R)$. Therefore when the dimension $d=1$, \eqref{e:12mulendpoint} turns out to be the $n$-th Calder\'on commutator mapping $L^{1}(\R)\times\cdots\times L^{1}(\R)\times L^1(\R)$ to $L^{\fr{1}{1+n},\infty}(\R)$, which has been previously proved by Duong, Grafakos and Yan \cite{DGY10}. To the best knowledge of the author, \eqref{e:12mulendpoint} is new when $d\geq 2$. Currently we still do not know whether $L^{d,1}(\R^d)$ in \eqref{e:12mulendpoint} could be replaced by $L^{d,1+\eps}(\R^d)$ for some $\eps>0$ when $d\geq 2$ and we will further explore this problem in our future
research.
\end{remark}

We next briefly introduce the methods employed and the main procedures in the proof of Theorem \ref{t:12}. We first establish the assertion (i) of Theorem \ref{t:12} in the case $1\leq r<\infty$ based on the recent deep result of A. Seeger, C. K. Smart and B. Street in \cite{SSS15}.
Next we show that if $q_i=\infty$ with $i=1,\cdots,n$ and $p=1$, i.e. $A_i$ is a Lipschitz function, then the weak type $L^{1,\infty}(\R^d)$ boundedness holds by the standard Calder\'on-Zygmund theory. We will devote to proving (ii), i.e. we need to give a weak type estimate. In the case of (ii), by our condition, $A_i$ satisfies $\nabla A_i\in L^{q_i}(\R^d)$. We will construct an {\it exceptional set\/} which satisfies the required weak type estimate. And on the complementary set of {\it exceptional set\/}, the function $A_i$ is a Lipschitz function. Then, roughly speaking, the strong type estimate in (i) and the weak type $L^{1,\infty}(\R^d)$ boundedness of $\mathcal{C}[\nabla A_1,\cdots,\nabla A_n, f](x)$ could be applied on the complementary set of {\it exceptional set\/}.
The idea partly comes from C. P. Calder\'on \cite{CCal75}, \cite{CCal79}. However we develop further more here.
Our argument works once we establish the strong type estimate \eqref{e:12mmulti} when $1<r<\infty$, $1<q_1,\cdots, q_n\leq\infty$, $1\leq p\leq\infty$ and weak type $L^{1,\infty}(\R^d)$ boundedness when $r=1$, $q_1=\cdots=q_n=\infty$, $p=1$.

The strategy to construct the {\it exceptional set\/} is as follows. Notice that the estimate $\|\nabla A\|_{L^q(\R^d)}$ is related to the Sobolev space $W^{1,q}(\R^d)$. When $1\leq q<d$, it is well known that Sobolev space $W^{1,q}(\R^d)$ is embedded into $L^{q*}(\R^d)$ with $\fr{1}{q^*}=\fr{1}{q}-\fr{1}{d}$. This property is crucial to help us establish a boundedness property of maximal operator (see Lemma \ref{l:12qleqd}). When $q>d$, {\it exceptional set\/} can be constructed by using the Mary Weiss maximal operator $\M$ (see Subsection \ref{s:1221} for its definition), which maps $L^q(\R^d)$ to $L^q(\R^d)$ (or $L^{q,\infty}(\R^d)$) only when $q>d$. But when $q=d$, the critical Sobolev $W^{1,d}(\R^d)$ is imbedded into an Orlicz space (see \cite{AF03}) which may be not useful to us.
This forces us to study the Mary Weiss maximal operator on $L^d(\R^d)$, which is quite challenging. Fortunately, we find a substitute that $\M$ maps the Lorentz space $L^{d,1}(\R^d)$ to $L^{d,\infty}(\R^d)$ which is enough to construct an {\it exceptional set\/}.
Base on this, we can establish the multilinear endpoint estimate that $\mathcal{C}[\nabla A_1,\cdots,\nabla A_n, f](x)$ maps $L^{d,1}(\R^d)\times\cdots\times L^{d,1}(\R^d)\times L^1(\R^d)$ to $L^{\fr{d}{d+n},\infty}(\R^d)$. Although we assume that $d\geq 2$ in our main results, the proof presented in this paper is also valid for $d=1$.
Therefore even when $d=1$, the proof of \eqref{e:12mulendpoint}  here is quite different from that by Duong, Grafakos and Yan \cite{DGY10}, thus we give a new proof of \eqref{e:12mulendpoint} for $d=1$.

As aforementioned, the above method built in this paper works as long as we establish the strong type estimate \eqref{e:12mmulti} when $1<r<\infty$ and weak type $L^{1,\infty}(\R^d)$ boundedness when $r=1$, $q_1=\cdots=q_n=\infty$, $p=1$.
Therefore we can use the method here to establish the similar multilinear estimates of Calder\'on commutator with a homogeneous rough kernel. Define the higher order Calder\'on commutator with a rough kernel by
\Bes
\mathcal{C}_\Om[\nabla A_1,\cdots,\nabla A_n,f](x)=\pv \int_{\R^d} \fr{\Om(x-y)}{|x-y|^d}\Big(\prod_{i=1}^n\fr{A_{i}(x)-A_i(y)}{|x-y|}\Big)\cdot f(y)dy,
\Ees
here $\Om$ is a function defined on $\R^d\setminus\{0\}$ which satisfies:
\begin{equation}\label{e:12Omho}
\Om(r\tet)=\Om(\tet)\ \ \text{for $r>0$, $\tet\in\mathbf{S}^{d-1}$;}\ \
\Om(-\tet)=(-1)^{n+1}\Om(\tet)\footnote{One may also consider the case $\Om(-\tet)=(-1)^{n}\Om(\tet)$ with some other moment cancelation conditions, we refer to see Remark \ref{r:12cancelation} for further discussion.}
\end{equation}
and $\Om\in L^1(\mathbf{S}^{d-1})$. $\mathbf{S}^{d-1}$ is the unit sphere in $\R^d$. Similar to those in Theorem \ref{t:12}, we have the following result.
\begin{theorem}\label{t:12r}
Suppose $\Om$ satisfies $(\ref{e:12Omho})$
and $\Om\in L\log^+L(\S^{d-1})$ for $d\geq2$.
Then all the results in Theorem \ref{t:12} also hold for $\mathcal{C}_\Om[\nabla A_1,\cdots,\nabla A_n,f](x).$
\end{theorem}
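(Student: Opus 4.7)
The plan is to reduce Theorem \ref{t:12r} to the same two building blocks that were identified after Theorem \ref{t:12} as sufficient for the entire smooth-kernel scheme to close: (a) the strong-type bound \eqref{e:12mmulti} for $\mathcal{C}_\Omega$ in the range $1<r<\infty$ with $1<q_i\leq\infty$ and $1\leq p\leq\infty$; and (b) the weak-type $L^{1,\infty}(\R^d)$ endpoint for $\mathcal{C}_\Omega$ at $q_1=\cdots=q_n=\infty$, $p=1$. Once (a) and (b) are in hand for $\mathcal{C}_\Omega$, the construction of the exceptional set (via Sobolev embedding when some $q_i<d$, and via the Mary Weiss maximal operator mapping $L^{d,1}\to L^{d,\infty}$ when $q_i=d$), the splitting of each $A_i$ on the good set into a Lipschitz piece plus a negligible remainder, and the final multilinear interpolation all transfer verbatim from the proof of Theorem \ref{t:12}, since none of those steps uses information about the kernel beyond the two a priori bounds. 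The counterexamples of part (iii) likewise do not rely on smoothness of $K$ and apply to $\mathcal{C}_\Omega$ without modification.

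For (a), I would decompose $\Omega$ dyadically by size on the sphere, writing $\Omega=\sum_{j\geq 0}\Omega_j$ with $\Omega_j=\Omega\cdot\chi_{\{2^{j-1}<|\Omega|\leq 2^j\}}$ for $j\geq 1$ and $\Omega_0=\Omega\cdot\chi_{\{|\Omega|\leq 1\}}$. Each piece can be arranged to retain the parity $\Omega_j(-\theta)=(-1)^{n+1}\Omega_j(\theta)$, which forces $\int_{\S^{d-1}}\Omega_j(\theta)\theta^\alpha\,d\theta=0$ for every multi-index $|\alpha|=n$; after a mild angular regularization this yields condition \eqref{e:12K_2} at every dyadic scale for the smooth surrogate of $\Omega_j(\theta)|\theta|^{-d}$, so that Theorem \ref{t:12}(i) applies to each piece. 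The resulting operator norm is polynomial in $j$ times $\|\Omega_j\|_{L^1(\S^{d-1})}$, and summing the resulting series converges precisely because $\Omega\in L\log^+L(\S^{d-1})$, yielding (a).

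For (b), with each $A_i$ Lipschitz the operator $\mathcal{C}_\Omega[\nabla A_1,\ldots,\nabla A_n,\cdot]$ becomes a \emph{linear} rough singular integral whose kernel $\Omega(x-y)|x-y|^{-d}\prod_{i=1}^n\frac{A_i(x)-A_i(y)}{|x-y|}$ is bounded pointwise by $\prod_i\|\nabla A_i\|_\infty\cdot|\Omega(x-y)||x-y|^{-d}$. Its $L^2\to L^2$ bound follows from (a) applied with $r=p=2$ and $q_i=\infty$, and the desired weak $(1,1)$ estimate then reduces to Seeger's theorem on rough homogeneous singular integrals (whose sharp hypothesis is $\Omega\in L\log^+L$), provided one verifies that the standard Calder\'on--Zygmund decomposition of $f$ is compatible with the bounded but non-translation-invariant Lipschitz weights $(A_i(x)-A_i(y))|x-y|^{-1}$ and that the cancellation of the angular pieces of $\Omega$ used in Seeger's argument is preserved on each annulus $R<|x-y|<2R$.

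I expect the main obstacle to be exactly this compatibility check in step (b): one has to carry out a Seeger-type frequency decomposition of $\Omega$ while keeping track of how the Lipschitz factors interact with the angular pieces, and to verify that the resulting H\"ormander-type constants at each scale combine into a summable series under the $L\log^+L$ hypothesis. This amounts to extending Seeger's rough singular integral machinery from the convolution operator $T_\Omega$ to its commutator-weighted cousin, and bookkeeping the Taylor expansion of each $A_i$ at the center of a Whitney cube so that the cancellation of $\Omega$ against polynomials of degree at most $n$ is preserved. Modulo this technical verification, the plan reduces Theorem \ref{t:12r} to a faithful imitation of the smooth-kernel proof of Theorem \ref{t:12} and requires no additional ideas.
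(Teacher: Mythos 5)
Your structural reduction is exactly right and matches the paper: once (a) the strong-type estimate for $1\leq r<\infty$ and (b) the weak $L^{1,\infty}$ endpoint for Lipschitz $A_i$ are established for $\mathcal{C}_\Omega$, the entire exceptional-set machinery, the interpolation, and the counterexamples from Theorem~\ref{t:12} carry over verbatim (with $|K(x)|\lesssim|x|^{-d}$ replaced by $|K(x)|\leq|\Omega(x)||x|^{-d}$ and Lemma~\ref{l:12disq1infty} invoked with the rough kernel). Where you diverge from the paper is in how (a) and (b) are themselves established, and in both cases the paper's route is both simpler and already fully worked out, whereas your sketches leave genuine gaps.

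For (a), you propose a dyadic-in-size decomposition $\Omega=\sum_j\Omega_j$ followed by ``mild angular regularization'' to manufacture the H\"older condition \eqref{e:12kr} for each piece, and then invoke Theorem~\ref{t:12}(i) piece by piece. This is problematic: the constant in Theorem~\ref{t:12}(i) depends on the implicit constants in \eqref{e:12kb}, \eqref{e:12kr}, so after mollifying $\Omega_j$ on the sphere you must quantify the trade-off between the regularization width and the error it introduces, and you do not do so; nor is ``polynomial in $j$'' justified. More to the point, none of this is needed. The paper instead proves (a) by the \emph{method of rotation}: write $\mathcal{C}_{\Omega,\varepsilon}$ as an average over $\theta\in\S^{d-1}$ of one-dimensional directional commutators, apply Minkowski's inequality, change variables $x=s\theta+z$ on each line $\{s\theta+z:s\in\R\}$, and invoke the $d=1$ case of Proposition~\ref{p:12strongr} on each line with the smooth odd kernel $1/t$. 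This gives Proposition~\ref{p:12strongrom} with the constant $\|\Omega\|_{L^1(\S^{d-1})}$ — i.e.\ under the hypothesis $\Omega\in L^1(\S^{d-1})$, strictly weaker than what you would get.

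For (b), you recognize this as the hard point and propose to extend Seeger's rough singular integral argument to the commutator-weighted operator, explicitly flagging the compatibility check as an unresolved obstacle. That check is not a routine verification — it is the whole content of a prior theorem: the paper simply observes that once the $A_i$ are fixed Lipschitz functions, $\mathcal{C}_\Omega[\nabla A_1,\ldots,\nabla A_n,\cdot]$ is a linear operator whose kernel is a standard Calder\'on--Zygmund kernel modulated by the rough factor $\Omega$, and then cites Ding--Lai \cite[Theorem 1.1]{DL15a}, which gives precisely the weak $(1,1)$ bound under $\Omega\in L\log^+L(\S^{d-1})$. In other words, your ``main obstacle'' is a theorem that already exists and that the paper invokes; re-deriving it from Seeger's machinery would be a separate project, and your proposal does not do so. So while your high-level plan is sound and coincides with the paper's, the two ingredients (a) and (b) are neither established by your argument nor attacked by the most economical route.
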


When $n=1$, part of results in Theorem \ref{t:12r} have been established by A. P. Calder\'on \cite{Cal65} and C. P. Calder\'on \cite{CCal75} \cite{CCal79}. We summarize their results \cite{Cal65}, \cite{CCal75}, \cite{CCal79} in Figure \ref{f:12}.
More precisely, A. P. Calder\'on \cite{Cal65} showed that if $\fr{1}{r}=\fr{1}{q}+\fr{1}{p}$ with $1<r<\infty$, $1<q\leq\infty$, $1<p<\infty$, then \eqref{e:12mmulti} holds when $\Omega\in L\log^+L(\S^{d-1})$ (see the region with diagonal lines in Figure \ref{f:12}).
Later C. P. Calder\'on \cite{CCal75} extended these results to the boundary of the region with diagonal lines where he proved \eqref{e:12mmulti} is still true in the case $1<r=q<\infty$, $p=\infty$ and in the case $r=1, q>1, p>1$.
C. P. Calder\'on \cite{CCal75} also showed that if $\Om$ satisfies the H\"ormander condition,  then \eqref{e:12mmulti} holds when ${d}/(d+1)< r<1$, $q>d$, $p>1$ (see the region with vertical lines in Figure \ref{f:12}).
In \cite{CCal79}, C. P. Calder\'on showed that if $\fr{d}{d+1}\leq r\leq1$, $1\leq q<d$, $1< p\leq\infty$, then the weak type estimate \eqref{e:12muweak} holds when $\Om\in L\log^+L(\S^{d-1})$ (see the region with horizontal lines in Figure \ref{f:12}). With the above results in hand, by using the interpolation arguments, one may easily get the strong type estimate \eqref{e:12mmulti} holds for $\fr{d}{d+n}<r<1$, $1<q<\infty$, $1<p<\infty$ if  $\Om\in L\log^+L(\S^{d-1})$. Recently Fong \cite{Fon16} considered the special case $\Om\equiv1$ and used the time-frequency analysis method to show \eqref{e:12mmulti} holds for $\fr{d}{d+n}<r<\infty$, $1<q<\infty$, $1<p<\infty$.

\begin{figure}[!h]\label{f:12}
\centering
\includegraphics[height=0.38\textwidth]{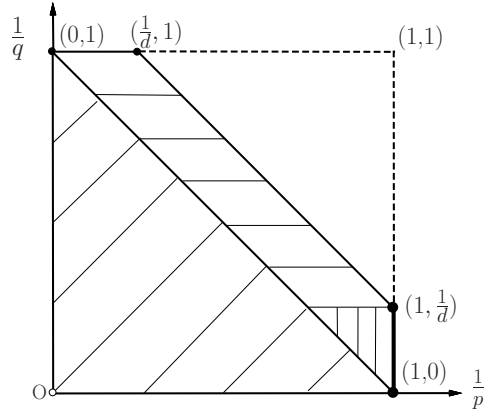}
\caption{\small{In the case $n=1$, our main results in Theorem \ref{t:12r} are new when $0<\fr{1}{q}\leq \fr{1}{d}$ and $\fr{1}{p}=1$, see the \textbf{bold} line including the endpoint $(\frac{1}{p},\frac{1}{q})=(1,\frac{1}{d})$.}}
\end{figure}

For the endpoint $(\frac{1}{p},\frac{1}{q})=(1,0)$, the weak type $L^{1,\infty}(\R^d)$ boundedness of $\C_\Om[\nabla A,f]$ with $\Om\in L\log^+L(\S^{d-1})$ has been recently derived by Ding and the author \cite{DL15a}. The contribution of Theorem \ref{t:12r} in the case $n=1$ is the estimates with $p=1$, $0<\fr{1}{q}\leq \fr{1}{d}$ (see the bold line including the endpoint $(\frac{1}{p},\frac{1}{q})=(1,\frac{1}{d})$ in Figure \ref{f:12}), which complements the aforementioned works for $n=1$.  To the best knowledge of the author, Theorem \ref{t:12r} is new when $n\geq 2$.

An immediate consequence of Theorem \ref{t:12} or \ref{t:12r} is the following $n$-th order commutator of the Riesz transform with $n$-th derivation which may have potential applications in partial differential equations.

\begin{corollary}\label{c:12c}
Let $R_j$ be the Riesz transform. Then all the results in Theorem \ref{t:12} also hold for the following operator
\Bes
\begin{split}
[A_1,\cdots,[A_n,&\pari^{\alp}\circ R_j]\cdots]f(x)\\
&=\pv\int_{\R^d}\partial^{\alp}_x\Big(\fr{x_j-y_j}{|x-y|^{d+1}}\Big)\cdot\Big(\prod_{i=1}^n{[A_{i}(x)-A_i(y)]}\Big)\cdot f(y)dy
\end{split}
\Ees
where $\alp\in\Z_+^d$ is a multi-indice with $|\alp|=n$.
\end{corollary}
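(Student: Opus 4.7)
The plan is to recognize the iterated commutator as an instance of $\C[\nabla A_1,\ldots,\nabla A_n,f]$ for a concrete convolution kernel and then invoke Theorem~\ref{t:12} directly. First, I would record that iterating the elementary identity
\[
[A,T]f(x)\;=\;\int\bigl(A(x)-A(y)\bigr)\,K_T(x-y)\,f(y)\,dy
\]
produces the product $\prod_{i=1}^n(A_i(x)-A_i(y))$ multiplying the kernel $\partial^{\alpha}_x\bigl(c_d(x_j-y_j)/|x-y|^{d+1}\bigr)$ of $\partial^{\alpha}\circ R_j$; factoring out the scalar $|x-y|^n$ then rewrites the operator of Corollary~\ref{c:12c} exactly as $\C[\nabla A_1,\ldots,\nabla A_n,f]$ with convolution kernel
\[
K(z)\;=\;c_d\,|z|^n\,\partial^{\alpha}\!\Bigl(\tfrac{z_j}{|z|^{d+1}}\Bigr).
\]

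Second, I would verify the three standing assumptions on $K$. Since $z_j/|z|^{d+1}$ is smooth and homogeneous of degree $-d$ on $\R^d\setminus\{0\}$, the function $\partial^{\alpha}(z_j/|z|^{d+1})$ is smooth and homogeneous of degree $-d-n$, and multiplying by $|z|^n$ yields a $C^{\infty}$ function on $\R^d\setminus\{0\}$ homogeneous of degree $-d$. This immediately gives the size bound \eqref{e:12kb}; the estimate $|\nabla K(z)|\lesssim |z|^{-d-1}$, which follows from the same homogeneity, yields the Lipschitz smoothness \eqref{e:12kr} with $\delta=1$ via the mean value theorem.

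The only substantive step is the moment condition \eqref{e:12K_2}. By the homogeneity of $K$, this reduces to showing $\int_{\S^{d-1}} K(\theta)\,\theta^{\gamma}\,d\sigma(\theta)=0$ for every multi-index $\gamma$ with $|\gamma|=n$, which I would obtain from a parity argument: the Riesz kernel $z_j/|z|^{d+1}$ is odd in $z$, so $\partial^{\alpha}$ with $|\alpha|=n$ produces a function of parity $(-1)^{n+1}$, and multiplication by the even factor $|z|^n$ preserves this; hence $K(-\theta)=(-1)^{n+1}K(\theta)$. Since $\theta^{\gamma}$ has parity $(-1)^n$ when $|\gamma|=n$, the integrand $K(\theta)\theta^{\gamma}$ is odd on $\S^{d-1}$ and its integral vanishes by the antipodal symmetry of surface measure. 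With \eqref{e:12kb}--\eqref{e:12kr} verified, Theorem~\ref{t:12} applies verbatim to this $K$, and all three conclusions of the corollary follow. The only mildly delicate point in the whole argument is the matching between the order $n$ of differentiation in $\partial^{\alpha}$ and the degree $n$ of the moments appearing in \eqref{e:12K_2}, which is exactly the hypothesis $|\alpha|=n$; beyond this bookkeeping there is no genuine obstacle, since the analytic content has already been delegated to Theorem~\ref{t:12}.
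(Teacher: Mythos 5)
Your proposal is correct, and in fact it supplies exactly the verification the paper leaves implicit: the corollary is stated without proof, being called ``an immediate consequence'' of Theorem~\ref{t:12}. The essential steps are all present. Dividing the iterated commutator kernel by $|x-y|^n$ identifies the operator with $\C[\nabla A_1,\ldots,\nabla A_n,f]$ for the convolution kernel
$K(z)=c_d\,|z|^n\,\partial^{\alpha}\bigl(z_j/|z|^{d+1}\bigr)$,
which is smooth away from the origin and homogeneous of degree $-d$ (the derivative lowers the degree by $n$ and the factor $|z|^n$ restores it). Homogeneity plus smoothness on the sphere immediately gives the size bound \eqref{e:12kb}, and the same homogeneity applied to $\nabla K$ gives $|\nabla K(z)|\lesssim|z|^{-d-1}$, whence \eqref{e:12kr} with $\delta=1$ by the mean value theorem. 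The only genuine verification is the moment condition \eqref{e:12K_2}, and your parity argument handles it cleanly: $z_j/|z|^{d+1}$ is odd, $n$ derivatives give parity $(-1)^{n+1}$, multiplication by the even factor $|z|^n$ preserves this, and since $\theta^{\gamma}$ has parity $(-1)^n$ for $|\gamma|=n$, the integrand $K(\theta)\theta^{\gamma}$ is odd on $\mathbf{S}^{d-1}$ and so are all the annulus integrals in \eqref{e:12K_2}. With the three hypotheses on $K$ checked, Theorem~\ref{t:12} applies directly and the corollary follows. No gaps; this is the natural reduction and is surely what the author had in mind.
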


This paper is organized as follows.
In Section \ref{s:122}, we give the proof of Theorem \ref{t:12}, which will be divided into several cases. First some preliminary lemmas are presented in Subsection \ref{s:1221}. Subsection \ref{s:1222} is devoted to proving (i) of Theorem \ref{t:12} in the case $1\leq r<\infty$ and weak type $L^{1,\infty}(\R^d)$ boundedness on $L^\infty(\R^d)\times\cdots\times L^\infty(\R^d)\times L^1(\R^d)$. The proofs of (ii) in Theorem \ref{t:12} are given in Subsections \ref{s:1223}, \ref{s:1224} and \ref{s:1225}. In Subsection \ref{s:1226}, we proceed to proving the rest part of (i) in Theorem \ref{t:12} by the multilinear interpolation theorem. Finally some counterexamples are given in Subsection \ref{s:1227} to prove (iii) in Theorem \ref{t:12}. The proof of Theorem \ref{t:12r} is similar to that of Theorem \ref{t:12}. So in Section \ref{s:123}, we outline the proof of Theorem \ref{t:12r}.
\vskip0.24cm
\textbf{Notation}. Throughout this paper, we only consider the dimension $d\ge2$ and the letter $C$ stands for a positive finite constant which is independent of the essential variables and not necessarily the same one in each occurrence. $A\lc B$ means $A\leq CB$ for some constant $C$. By the notation $C_\eps$ means that the constant depends on the parameter $\eps$. $A\approx B$ means that $A\lc B$ and $B\lc A$.
$n$ represents the order of Calder\'on commutator. The indices $r$, $q_1,\cdots, q_n$ and $p$ satisfy $\fr{1}{r}=\big(\sum_{i=1}^n\fr{1}{q_i}\big)+\fr{1}{p}$ with $1\leq q_i\leq\infty$ $(i=1,\cdots,n)$ and $1\leq p\leq\infty$ in the whole paper. For a set $E\subset\R^d$, we denote by  $|E|$ or $m(E)$ the Lebesgue measure of $E$. $\mathbf{S}^{d-1}$ is the unit sphere in $\R^d$. $d\si$ denotes the spherical measure on $\mathbf{S}^{d-1}$.  $\nabla A$ will stand for the vector $(\pari_1A,\cdots,\pari_dA)$ where $\pari_i A(x)=\pari A(x)/\pari x_i$.
Define $$\|\nabla A\|_{X}=\Big\|\Big(\sum_{i=1}^d|\pari_iA|^2\Big)^{\fr{1}{2}}\Big\|_{X}$$
for $X=L^p(\R^d)$ or $X=L^{d,1}(\R^d)$. $\Z_+$ denotes the set of all nonnegative integers and $\Z_+^d=\underbrace{\Z_+\times \cdots\times \Z_+}_d.$
\vskip1cm

\section {Proof of Theorem \ref{t:12}}\label{s:122}
\subsection{Some preliminary lemmas}\label{s:1221}\quad
\vskip0.2cm
Before giving the proof of Theorem \ref{t:12}, we introduce some lemmas which play a key role in the proof of Theorem \ref{t:12}.
For those readers who are not familiar with the theory of the Lorentz space $L^{p,q}(\R^d)$, we refer to see \cite[Chapter V.3]{Ste71}. We will use the theory of the Lorentz space $L^{p,q}(\R^d)$ in Lemma \ref{l:11md}.
Now we begin by some properties of a special maximal function which was introduced by Mary Weiss (see \cite{CCal75}). It is defined as
$$\M(\nabla A)(x)=\sup_{h\in\R^d\setminus\{0\}}\fr{|A(x+h)-A(x)|}{|h|}.$$
\begin{lemma}\label{l:mw}
Let $\nabla A\in L^p(\R^d)$ with $p>d$. Then $\M$ is bounded on $L^p(\R^d)$, that is
$$\|\M(\nabla A)\|_{L^p(\R^d)}\leq C\|\nabla A\|_{L^p(\R^d)},$$
where the constant $C$ is independent of $A$.
\end{lemma}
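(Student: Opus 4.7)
The plan is to obtain a pointwise domination of $\M(\nabla A)$ by a Hardy--Littlewood maximal function applied to $|\nabla A|^q$ for some auxiliary exponent $q$ strictly between $d$ and $p$, and then invoke the standard $L^{p/q}$-boundedness of the maximal operator. The slack $p>d$ is exactly what permits such a choice of $q$.

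First I would fix $q$ with $d<q<p$. Since $q>d$, the local Morrey inequality (a direct consequence of the Poincar\'e inequality on balls combined with a telescoping argument over a chain of concentric doublings) gives, for every $x\in\R^d$ and every $h\neq 0$,
\Bes
|A(x+h)-A(x)|\lc |h|^{1-d/q}\Big(\int_{B(x,2|h|)}|\nabla A(z)|^q\,dz\Big)^{1/q}.
\Ees
Rewriting the right-hand side in terms of the average of $|\nabla A|^q$ over $B(x,2|h|)$ and dividing by $|h|$ yields
\Bes
\fr{|A(x+h)-A(x)|}{|h|}\lc \big(M(|\nabla A|^q)(x)\big)^{1/q},
\Ees
where $M$ denotes the Hardy--Littlewood maximal operator. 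The right-hand side is independent of $h$, so taking the supremum over $h\neq 0$ produces the key pointwise estimate
\Bes
\M(\nabla A)(x)\lc \big(M(|\nabla A|^q)(x)\big)^{1/q}.
\Ees

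Second, I would raise this inequality to the $p$-th power and integrate. Since $p/q>1$, the maximal operator $M$ is bounded on $L^{p/q}(\R^d)$; applying this to $|\nabla A|^q\in L^{p/q}(\R^d)$ gives
\Bes
\int_{\R^d}\big(\M(\nabla A)(x)\big)^p\,dx\lc \int_{\R^d}\big(M(|\nabla A|^q)(x)\big)^{p/q}\,dx\lc \int_{\R^d}|\nabla A(x)|^p\,dx,
\Ees
which is the desired bound.

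The only subtle point is really the choice of $q$: one cannot take $q=p$ in the Morrey step, since then the preceding display would degenerate into an $L^1$-bound for $M$, which is false. It is precisely the hypothesis $p>d$ that leaves room to pick an auxiliary exponent $q\in(d,p)$, and the argument hinges on this slack. Everything else reduces to the local Morrey/Poincar\'e inequality and the $L^{p/q}$ Hardy--Littlewood maximal theorem.
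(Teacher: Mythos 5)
Your argument is essentially identical to the paper's: the paper quotes exactly the pointwise estimate $\frac{|A(x)-A(y)|}{|x-y|}\lesssim\bigl(\frac{1}{|x-y|^d}\int_{|x-z|\le 2|x-y|}|\nabla A(z)|^q\,dz\bigr)^{1/q}$ for any $q>d$ (citing C.~P.~Calder\'on's Lemma 1.4), which is precisely your Morrey inequality after dividing by $|h|$, and then applies the $L^{p/q}$-boundedness of the Hardy--Littlewood maximal operator with an intermediate exponent $d<q<p$. Your writeup simply unfolds the references and makes the choice of $q$ explicit; the route is the same.
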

\begin{proof}
By using a standard limiting argument, we only need to consider $A$ as a $C^\infty$ function with compact support. Then the lemma just follows from the inequality
$$\fr{|A(x)-A(y)|}{|x-y|}\lc\Big(\fr{1}{|x-y|^d}\int_{|x-z|\leq 2|x-y|}|\nabla A(z)|^{q}dz\Big)^{\fr{1}{q}},$$
which holds for any $q>d$ (see \cite[Lemma 1.4]{CCal75}) and the fact that the Hardy-Littlewood maximal operator is of strong type $(p,p)$ for $p>1$.
\end{proof}
\begin{lemma}\label{l:11md}
Let $\nabla A\in L^{d,1}(\R^d)$, the standard Lorentz space. Then for any $\lam>0$, there exist a finite constant $C$ independent of $A$ such that
$$\lam^d|\{x\in\R^d:\M(\nabla A)(x)>\lam\}|\leq C\|\nabla A\|^d_{L^{d,1}(\R^d)}.$$
\end{lemma}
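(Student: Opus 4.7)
The plan is to combine a Sobolev--Poincar\'e estimate in the Lorentz scale with a Vitali covering argument and Minkowski's integral inequality.

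First I would establish the pointwise bound
\Be
|A(x) - A(y)| \leq C\, \|\nabla A\|_{L^{d,1}(B)}, \qquad x,y \in B, \nonumber
\Ee
valid for any ball $B \subseteq \R^d$ of radius comparable to $|x-y|$. This is a localized form of Stein's sharp endpoint Sobolev embedding: one represents $A - A_B$ as a Riesz $1$-potential of $\nabla A\cdot \mathbf{1}_B$ and uses the fact that the Riesz potential $I_1$ maps $L^{d,1}(\R^d)$ into $L^\infty(\R^d)$; a scaling check shows the constant is dimension-free.

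Next, I would apply a Vitali covering to the level set $E_\lambda := \{x \in \R^d : \M(\nabla A)(x) > \lambda\}$. For each $x \in E_\lambda$, pick $h_x \neq 0$ witnessing the defining supremum (up to a factor of two), and let $B_x := B(x, 4|h_x|)$, so that both $x$ and $x+h_x$ belong to $B_x$. Writing $r_x$ for the radius of $B_x$, the pointwise bound above rewrites as
\Bes
c\lambda r_x \;<\; |A(x+h_x) - A(x)| \;\leq\; 2\,\|A - A_{B_x}\|_{L^\infty(B_x)} \;\leq\; C\,\|\nabla A\|_{L^{d,1}(B_x)}.
\Ees
Extracting a pairwise disjoint subfamily $\{B_j\}$ of $\{B_x\}_{x\in E_\lambda}$ with $\bigcup_j 5B_j \supseteq E_\lambda$ then yields
\Bes
|E_\lambda| \leq 5^d \sum_j |B_j| \lc \lambda^{-d}\sum_j \|\nabla A\|_{L^{d,1}(B_j)}^d.
\Ees

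The crux of the argument is the disjoint sum inequality $\sum_j \|\nabla A\|_{L^{d,1}(B_j)}^d \leq \|\nabla A\|_{L^{d,1}(\R^d)}^d$. I would prove it using the representation $\|g\|_{L^{d,1}(E)} = d\int_0^\infty |\{|g|>s\}\cap E|^{1/d}\,ds$: setting $f_j(s) := |\{|\nabla A|>s\}\cap B_j|^{1/d}$, the disjointness of the $B_j$ forces $\sum_j f_j(s)^d = |\{|\nabla A|>s\}\cap \bigcup_j B_j| \leq |\{|\nabla A|>s\}|$. Minkowski's integral inequality (valid since $d \ge 1$), applied in the $\ell^d$-valued setting to $F(s,j) := f_j(s)$, delivers
\Bes
\Bigl(\sum_j \Bigl(\int_0^\infty f_j(s)\,ds\Bigr)^d\Bigr)^{1/d} \leq \int_0^\infty \Bigl(\sum_j f_j(s)^d\Bigr)^{1/d}\,ds \leq \tfrac{1}{d}\,\|\nabla A\|_{L^{d,1}(\R^d)},
\Ees
and raising to the $d$-th power gives the desired bound. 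The hard step is the Lorentz Poincar\'e estimate in the first paragraph, which relies on Stein's sharp $I_1$ embedding; once that is in hand, the Vitali-plus-Minkowski combination closes the argument cleanly to yield $|E_\lambda| \lc \lambda^{-d}\|\nabla A\|_{L^{d,1}(\R^d)}^d$.
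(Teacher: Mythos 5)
Your argument is correct, and it takes a genuinely different route from the paper's. The paper writes $A = \mathbb{K}*f$ with $\mathbb{K}(x)=|x|^{-(d-1)}$ and $f=C_d\sum_j R_j(\partial_j A)$, splits $A(x+h)-A(x)$ into near, local, and far pieces according to whether $|x-y|\leq 2|h|$ or not, and dominates each piece by an explicit sublinear operator --- the Hardy--Littlewood maximal function, a Lorentz-averaged maximal operator $\Lambda$, maximal Riesz transforms, and a smooth tail --- each mapping $L^{d,1}(\R^d)$ to $L^{d,\infty}(\R^d)$; this produces a pointwise domination $\M(\nabla A)(x)\lc T(f)(x)$, in the spirit of Stein's differentiability paper, from which the weak-type estimate is immediate. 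You bypass the Riesz transform representation entirely: your ingredients are the localized Sobolev--Poincar\'e bound $|A(x)-A(y)|\lc\|\nabla A\|_{L^{d,1}(B)}$ on a ball $B$ of radius comparable to $|x-y|$ (via the Poincar\'e representation formula and Stein's endpoint $I_1\colon L^{d,1}\to L^\infty$), a Vitali $5r$-covering of the level set (legitimate because the same bound forces $r_x\lc\lambda^{-1}\|\nabla A\|_{L^{d,1}(\R^d)}$, so the radii are uniformly bounded), and the strong subadditivity $\sum_j\|\nabla A\|_{L^{d,1}(B_j)}^d\leq\|\nabla A\|_{L^{d,1}(\R^d)}^d$ over disjoint balls, which you correctly reduce to Minkowski's integral inequality with exponent $d$ through the distribution-function formula for the $L^{d,1}$ norm. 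Your proof is more self-contained and geometric --- it needs neither the boundedness of Riesz transforms on $L^{d,1}$ nor the operator $\Lambda$ --- and the subadditivity observation is a clean, reusable tool; the paper's route, in exchange, yields the stronger pointwise domination of $\M(\nabla A)$ by a concrete operator applied to $f$. One small wording issue: where you say the Poincar\'e constant is ``dimension-free'' you mean ``radius-independent'' --- the constant depends on $d$ but is invariant under dilations, which is exactly what your scaling check confirms.
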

\begin{proof}
It suffices to consider $A$ as a smooth function with compact support. By the formula given in \cite[page 125, (17)]{Ste70}, we may write
\Bes
A(x)=C_d\sum_{i=1}^d\int_{\R^d}\fr{x_i-y_i}{|x-y|^d}\pari_i A(y)dy=\mathbb{K}*f(x)
\Ees
where $\mathbb{K}(x)=1/|x|^{d-1}$, $f=C_d\sum_{j=1}^dR_j(\pari_j A)$ with $R_j$ the Riesz transforms. By using the fact the Riesz transform $R_j$ maps $L^{d,1}(\R^d)$ to itself which follows from the general form of the Marcinkiewicz interpolation theorem (see \cite[Theorem 3.15 in page 197]{Ste71}), one can easily get that
$$\|f\|_{L^{d,1}(\R^d)}\lc\|\nabla A\|_{L^{d,1}(\R^d)}.$$
Hence to prove the lemma, it is enough to show that
\Be\label{e:11weissmaximal}
\lam^d|\{x\in\R^d:\M(\nabla A)(x)>\lam\}|\lc\|f\|^d_{L^{d,1}(\R^d)}
\Ee
with $A=\mathbb{K}*f$. In the following our goal is to prove that for any $x\in\R^d$, the estimate
$$|A(x+h)-A(x)|\lc |h|T(f)(x)$$
holds uniformly for $h\in\R^d\setminus\{0\}$ with $T$ an operator maps $L^{d,1}(\R^d)$ to $L^{d,\infty}(\R^d)$. Once we prove this, we get \eqref{e:11weissmaximal} and hence complete the proof of Lemma \ref{l:11md}. We write
\Bes
\begin{split}
A(&x+h)-A(x)\\
&=\int_{|x-y|\leq2|h|}|x+h-y|^{-d+1}f(y)dy-\int_{|x-y|\leq2|h|}|x-y|^{-d+1}f(y)dy\\
&\ \ \ \ +\int_{|x-y|>2|h|}\Big(|x+h-y|^{-d+1}-|x-y|^{-d+1}\Big)f(y)dy\\
&=I+II+III.
\end{split}
\Ees

Let us first consider $I$. By an elementary calculation, one may get $\mathbb{K}\in L^{d',\infty}(\R^d)$ where $d'=d/(d-1)$. Set $B(x,r)=\{y\in\R^d:|x-y|\leq r\}$. Using the rearrangement inequality (see \cite[page 74, Exercise 1.4.1]{Gra249}), we have
\Bes
\begin{split}
|I|&\leq\int_{\R^d}\mathbb{K}(x+h-y)|f\chi_{B(x,2|h|)}(y)|dy\leq\int_0^\infty \mathbb{K}^*(s)(f\chi_{B(x,2|h|)})^*(s)ds\\
&\leq\Big(\int_0^\infty (f\chi_{B(x,2|h|)})^*(s)s^{\fr{1}{d}}\fr{ds}{s}\Big)\cdot\sup_{s>0}\Big(\mathbb{K}^*(s)s^{\fr{1}{d'}}\Big)\\
&\lc\|f\chi_{B(x,2|h|)}\|_{L^{d,1}(\R^d)}\|\mathbb{K}\|_{L^{d',\infty}(\R^d)},
\end{split}
\Ees
here $f^*$ represents the decreasing rearrangement of $f$.
Using the definition of Lorentz space, one may get $\|\chi_E\|_{L^{d,1}(\R^d)}=\|\chi_E\|_{L^d(\R^d)}$ holds for any characteristic function $\chi_E$ of set $E$ of finite Lebesgue measure, thus $\|\chi_{B(x,2|h|)}\|_{L^{d,1}(\R^d)}=C_d|h|$. Therefore we get
$$|I|\lc |h|\Lam(f)(x),\ \ \text{where}\ \ \Lam(f)(x)=\sup_{r>0}\fr{\|f\chi_{B(x,r)}\|_{L^{d,1}(\R^d)}}{\|\chi_{B(x,r)}\|_{L^{d,1}(\R^d)}}.$$
Below we need to show that the operator $\Lam$ maps $L^{d,1}(\R^d)$ to $L^{d,\infty}(\R^d)$, which can be found in \cite{Ste81}. Since the proof is short, for completeness, we also give a proof here.  Note that $L^{d,1}(\R^d)$ is a Banach space (see \cite[page 204, Theorem 3.22]{Ste71}), it is sufficient to show that $\Lam$ maps the characteristic function $\chi_E\in L^{d,1}(\R^d)$ to $L^{d,\infty}(\R^d)$ (see \cite[page 62, Lemma 1.4.20]{Gra249}). However
in this case, it is equivalent to show that
$$\lam|\{x\in\R^d: M(\chi_E)(x)>\lam\}|\lc\|\chi_E\|_{L^1(\R^d)},$$
where $M$ is the Hardy-Littlewood maximal operator. It is well known that $M$ is of weak type (1,1), hence we have shown that $\Lam$ maps $L^{d,1}(\R^d)$ to $L^{d,\infty}(\R^d)$.

Next we consider $II$. This estimate is quite simple. Since the kernel $k(x)=\eps^{-1}|x|^{-d+1}\chi_{\{|x|\leq \eps\}}$ is a radial non-increasing function and $L^1$ integrable in $\R^d$, we get
$$|II|\lc \|k\|_{L^1(\R^d)}|h|M(f)(x).$$
Notice that $L^{p,1}(\R^d)\subset L^p(\R^d)$ and $M$ is of strong type $(p,p)$, $1<p<\infty$, of course those imply that $M$ maps $L^{d,1}(\R^d)$ to $L^{d,\infty}(\R^d)$.

Finally we give an estimate of $III$. Notice that we only consider $|x-y|>2|h|$. Then by the Taylor expansion of $|x-y+h|^{-d+1}$, one may have
\Be\label{e:11taylor}
\fr{1}{|x-y+h|^{d-1}}-\fr{1}{|x-y|^{d-1}}=(-d+1)\sum_{j=1}^dh_j\fr{x_j-y_j}{|x-y|^{d+1}}+R(x,y,h)
\Ee
where the Taylor expansion's remainder term $R(x,y,h)$ satisfies
$$|R(x,y,h)|\leq C|h|^2|x-y|^{-d-1}\ \text{if } |x-y|>2|h|.$$
Inserting \eqref{e:11taylor} into the term $III$ with the above estimate of $R(x,y,h)$, we conclude that
$$|III|\lc |h|\sum_{j=1}^d R_j^*(f)(x)+|h|^2\int_{|x-y|>2|h|}|x-y|^{-d-1}|f(y)|dy$$
where $R_j^*$ is the maximal Riesz transform which is defined by
$$R_j^*(f)(x)=\sup_{\eps>0}\Big|\int_{|x-y|>\eps}\fr{x_j-y_j}{|x-y|^{d+1}}f(y)dy\Big|.$$
Since $R_j^*$ is bounded on $L^p(\R^d)$, $1<p<\infty$, one immediately gets that $R_j^*$ maps $L^{d,1}(\R^d)$ to $L^{d,\infty}(\R^d)$.
The second term which controls $III$ can be dealt with the same way as we do in the estimate of $II$ once we notice that the function $\eps|x|^{-d-1}\chi_{\{|x|>\eps\}}$ is radial non-increasing and $L^1$ integrable.
\end{proof}
\begin{remark}
Here it should be pointed out that some idea in this proof is similar to that in \cite{Ste81}, where E. M. Stein proved that for a function $F$ defined in $\R^d$ with $\nabla F\in L^{d,1}_{loc}(\R^d)$, then $F$ is equivalent with a continuous function and
\Be\label{e:11smooth}
F(x+h)-F(x)-h(\nabla F)(x)=o(|h|)\ \text{for almost every $x$,}
\Ee
as $|h|\rta0$. The method of proving \eqref{e:11smooth} in \cite{Ste81} is  just giving a direct estimate of \eqref{e:11smooth}.  See also another proof by using elementary principle in \cite{DS84}, \cite{DSm84}. The property of the maximal operator $\M$ that maps $L^{d,1}(\R^d)$ to $L^{d,\infty}(\R^d)$ seems to be more powerful since it implies \eqref{e:11smooth} immediately. In fact, using the dense limiting arguments and Lemma \ref{l:11md}, we get for any function $F$ defined in $\R^d$ with $\nabla F\in L^{d,1}(\R^d)$,
$$\lim_{s\rta0}\fr{F(x+s\tet)-F(x)}{s}=(\nabla F)(x)\cdot \tet,\ \text{for any $\tet\in\S^{d-1}$, a.e. $x\in\R^d$,}$$
which is inequivalent to \eqref{e:11smooth}.
\end{remark}

\begin{lemma}\label{l:12qleqd}
Let $\nabla A\in L^p(\R^d)$ with $1\leq p<d$. Set $1/s=1/p-1/d$.
Define the maximal operator $\mathfrak{M}_{s}$ and the Hardy-Littlewood maximal operator of order $p$ $M_{p}$ by
\Bes
\begin{split}
\mathfrak{M}_{s}(\nabla A)(x)&=\sup_{r>0}\Big(\fr{1}{|Q(x,r)|}\int_{Q(x,r)}\Big|\fr{A(x)-A(y)}{r}\Big|^{s}dy\Big)^{1/s},\\
M_p(f)(x)&=\sup\limits_{r>0}\bigg(\frac 1{|Q(x,r)|}\int_{Q(x,r)}|f(y)|^pdy\bigg)^{1/p},
\end{split}
\Ees
where $Q(x,r)$ is a cube with center $x$ and sidelength $r$. Then we have
\Bes
\mathfrak{M}_{s}(\nabla A)(x)\lc M_p(\nabla A)(x).
\Ees
\end{lemma}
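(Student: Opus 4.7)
The plan is to combine the Sobolev-Poincar\'e inequality on the cube $Q(x,r)$ with a telescoping-cube argument to handle the pointwise value $A(x)$. Write $Q:=Q(x,r)$ and $A_Q:=\frac{1}{|Q|}\int_Q A(z)\,dz$. By the triangle inequality,
$$
\Big(\frac{1}{|Q|}\int_Q |A(x)-A(y)|^s\,dy\Big)^{1/s} \leq |A(x)-A_Q| + \Big(\frac{1}{|Q|}\int_Q |A(y)-A_Q|^s\,dy\Big)^{1/s},
$$
so after dividing by $r$ it suffices to bound each summand by $M_p(\nabla A)(x)$.

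For the second summand I would invoke the classical Sobolev-Poincar\'e inequality on $Q$: since $1\leq p<d$ and $1/s=1/p-1/d$,
$$
\Big(\frac{1}{|Q|}\int_Q |A(y)-A_Q|^s\,dy\Big)^{1/s} \lc r\Big(\frac{1}{|Q|}\int_Q |\nabla A(y)|^p\,dy\Big)^{1/p} \leq r\,M_p(\nabla A)(x),
$$
where the last inequality uses $x\in Q$. For the pointwise summand $|A(x)-A_Q|$ I would telescope along the dyadic shrinking cubes $Q_k:=Q(x,2^{-k}r)$, $k\geq 0$, reducing by density to $A\in C_c^\infty(\Rd)$ so that $A_{Q_k}\to A(x)$ as $k\to\infty$. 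The standard $L^1$-Poincar\'e inequality on each $Q_k$, together with the nesting $Q_{k+1}\subset Q_k$ and $|Q_{k+1}|\approx |Q_k|$, yields
$$
|A_{Q_{k+1}}-A_{Q_k}|\lc\frac{1}{|Q_k|}\int_{Q_k}|A(y)-A_{Q_k}|\,dy\lc 2^{-k}r\cdot\frac{1}{|Q_k|}\int_{Q_k}|\nabla A(y)|\,dy\lc 2^{-k}r\,M_p(\nabla A)(x),
$$
where the last step uses H\"older's inequality and $p\geq 1$. Summing the geometric series gives $|A(x)-A_Q|\lc r\,M_p(\nabla A)(x)$.

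The only delicate point is the limit $A_{Q_k}\to A(x)$, which is harmless after the reduction to $C_c^\infty$ (and the general case then follows by standard density arguments). Otherwise the argument is a routine combination of Sobolev-Poincar\'e, Poincar\'e, and H\"older's inequalities together with the definition of $M_p$. The real content of the lemma is thus the repackaging of the subcritical Sobolev embedding $W^{1,p}\hookrightarrow L^s$ as a pointwise domination by a Hardy-Littlewood type maximal operator, which is precisely what is needed for constructing the exceptional set in the regime $1\le p<d$ later in the paper.
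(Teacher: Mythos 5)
Your argument is correct. The paper offers no proof of its own here---it simply cites \cite[Lemma~3.2]{CCal79}---so you have supplied a genuine self-contained alternative. The structure you use (split $A(x)-A(y)=(A(x)-A_Q)+(A_Q-A(y))$, bound the oscillation term by the Sobolev--Poincar\'e inequality at the conjugate exponent $s=dp/(d-p)$, and bound the pointwise term $A(x)-A_Q$ by telescoping over the dyadic shrinking cubes $Q_k=Q(x,2^{-k}r)$ via the $L^1$-Poincar\'e inequality and Jensen) is a standard, clean route to pointwise maximal-function bounds of this type, and it packages the subcritical Sobolev embedding in exactly the form the exceptional-set construction later needs. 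Two small remarks: in the telescoping step, the passage from an average over $Q_{k+1}$ to one over $Q_k$ uses $Q_{k+1}\subset Q_k$ together with $|Q_k|/|Q_{k+1}|=2^d$, so it costs only a dimensional constant---this is the one place the nesting is exploited and is worth spelling out. Also, in this paper the lemma is only ever invoked after a preliminary reduction to $A\in C_c^\infty(\R^d)$ (as in the opening line of the proof of Proposition~\ref{p:12qibigd}), so the density reduction you flag is harmless but not actually needed at the point of use; alternatively, for the precise representative used in Step~1 of Proposition~\ref{p:12qleqdall}, the Lebesgue differentiation theorem already gives $A_{Q_k}\to A(x)$ a.e., which is all the telescoping requires.
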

\begin{proof}
We refer to see \cite[Lemma 3.2]{CCal79} and its proof there from line (3.2.2) to (3.2.7).
\end{proof}
\begin{lemma}\label{l:12disq1infty}
Let $\{Q_k\}_{k}$ be the disjoint cubes in $\R^d$. Denote by $l(Q_k)$ the side length of $Q_k$. Suppose $\Om$ satisfies \eqref{e:12Omho}. Define the operator $T_s$ as
$$T_{s}(f)(x)=\sum_k\int_{Q_k}\fr{|\Om(x-y)|\cdot l(Q_k)^{s}}{[l(Q_k)+|x-y|]^{d+s}}|f(y)|dy.$$
Then for any $s>0$ and $1\leq q\leq\infty$, we get that,
\Bes
\|T_{s}(f)\|_{L^q(\R^d)}\lc\|\Om\|_{L^1(\S^{d-1})}\|f\|_{L^q(\R^d)}.
\Ees
\end{lemma}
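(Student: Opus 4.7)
My plan is to establish the $L^q$-boundedness of $T_s$ for all $q\in[1,\infty]$ via Schur's test applied to the positive integral kernel
$$K(x,y)=\frac{|\Om(x-y)|\, l(Q_{k(y)})^s}{(l(Q_{k(y)})+|x-y|)^{d+s}}\,\chi_{\bigcup_k Q_k}(y),$$
where $k(y)$ is the unique index with $y\in Q_{k(y)}$ (uniqueness is guaranteed by disjointness). By Schur's test, it suffices to control the two marginal sums
$$A:=\sup_{y}\int_{\R^d}K(x,y)\,dx\lc\|\Om\|_{L^1(\S^{d-1})},\quad B:=\sup_{x}\int_{\R^d}K(x,y)\,dy\lc\|\Om\|_{L^1(\S^{d-1})},$$
and then $\|T_s\|_{L^q\to L^q}\lc\|\Om\|_{L^1(\S^{d-1})}$ for every $q\in[1,\infty]$ follows either by Schur's inequality $\|T_s\|_{L^q\to L^q}\le A^{1/q}B^{1/q'}$ or by interpolation between the $L^1\to L^1$ and $L^\infty\to L^\infty$ endpoints.

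For $A$, I fix $y$ in a single cube $Q_{k_0}$. Substituting $z=x-y$ and passing to polar coordinates $z=r\theta$ yields
$$\int_{\R^d}K(x,y)\,dx=\|\Om\|_{L^1(\S^{d-1})}\int_0^\infty\frac{l_{k_0}^s r^{d-1}}{(l_{k_0}+r)^{d+s}}\,dr=\|\Om\|_{L^1(\S^{d-1})}\int_0^\infty\frac{u^{d-1}}{(1+u)^{d+s}}\,du,$$
after the scaling $u=r/l_{k_0}$. The remaining $u$-integral is a Beta-function value, finite precisely because $s>0$, and the bound is uniform in $y$ and in the cube configuration.

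For $B$, I fix $x$ and again pass to polar coordinates $y=x-r\theta$, obtaining
$$\int_{\R^d}K(x,y)\,dy=\int_{\S^{d-1}}|\Om(\theta)|\sum_{k}\int_{J_k(x,\theta)}\frac{l_k^s r^{d-1}}{(l_k+r)^{d+s}}\,dr\,d\sigma(\theta),$$
where $J_k(x,\theta)=\{r>0:x-r\theta\in Q_k\}$ is a union of intervals of total length $\le\sqrt{d}\,l_k$ (bounded by the diameter of $Q_k$). The crucial point is that the sets $\{J_k(x,\theta)\}_k$ are pairwise disjoint subsets of $[0,\infty)$ because the cubes $Q_k$ are. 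I split each $J_k$ into the portion with $r\le l_k$ (where the integrand is comparable to $r^{d-1}/l_k^d$) and the portion with $r>l_k$ (where it is comparable to $l_k^s/r^{s+1}$), and then combine the per-cube contributions using the strict positivity of $s$ for integrability at $r\to\infty$ together with the disjointness of the $J_k$'s along the ray to produce a uniform bound for the inner sum. Integrating in $\theta$ then yields $B\lc\|\Om\|_{L^1(\S^{d-1})}$.

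The main obstacle is the row bound $B$: while $A$ reduces to a single clean scaling, the bound for $B$ must tame the sum over cubes of widely varying scales that a fixed ray can pass through. The decisive ingredients are (i) positivity of $s>0$, which produces the decay $l_k^s/r^{s+1}$ needed for summability at large $r$, and (ii) the disjointness of the $J_k(x,\theta)$ along each ray, which prevents the near-field contribution from each cube from compounding.
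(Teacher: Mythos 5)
Your Schur-test/interpolation route hinges on the row bound $B=\sup_x\int K(x,y)\,dy\lc\|\Om\|_{L^1(\S^{d-1})}$, which you correctly flag as the main obstacle. But the obstacle is not surmountable: $B$ is in fact infinite in general, and the ingredients you cite (disjointness of the ray-intercepts $J_k(x,\theta)$ together with $s>0$) do not suffice, because at each $r>0$ the per-cube integrand $l_k^s r^{d-1}/(l_k+r)^{d+s}$ can be as large as $\approx r^{-1}$ whenever $l_k\approx r$, and nothing in the hypothesis prevents a disjoint family from realizing $l(Q_k)\approx\dist(x,Q_k)$ at every dyadic scale. Concretely, take $d=1$, $\Om\equiv1$, and $Q_k=[2^{-k},\,3\cdot2^{-k-1}]$ for $k\ge1$; these are disjoint with $l(Q_k)=2^{-k-1}$. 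At $x=0$, the substitution $y=l(Q_k)\,u$ gives
$$\int_{Q_k}\frac{l(Q_k)^{s}}{(l(Q_k)+|y|)^{1+s}}\,dy=\int_{2}^{3}\frac{du}{(1+u)^{1+s}},$$
a strictly positive constant independent of $k$, so the sum over $k$ diverges; one also checks $T_s(\chi_{[0,1]})(x)\gtrsim\log(1/x)$ as $x\to0^+$, so $\|T_s\chi_{[0,1]}\|_{L^\infty}=\infty$ while $\|\Om\|_{L^1(\S^{0})}\|\chi_{[0,1]}\|_{L^\infty}$ is finite. Hence the $L^\infty\to L^\infty$ endpoint is false as stated, which defeats both the Schur bound $A^{1/q}B^{1/q'}$ for every $q>1$ and interpolation from the $L^\infty$ endpoint. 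Your column bound $A$ is fine (it is exactly the paper's $q=1$/Fubini computation), but alone it gives only $q=1$.

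For what it is worth, the paper's own proof of this lemma has the same gap at $q=\infty$: it replaces the inner integral by $\sup_x\int_{Q_k}|\Om(x-y)|\,l(Q_k)^s(l(Q_k)+|x-y|)^{-d-s}\,dy$ and sums over $k$, but for $\Om\equiv1$ each such supremum is $\gtrsim1$ and the series diverges when there are infinitely many cubes. The statement is nevertheless correct for $1\le q<\infty$, but this requires a different argument from Schur/interpolation. One clean route is duality: for $f,g\ge0$ a dyadic shell decomposition of the inner kernel gives
$$\inn{T_sf}{g}=\sum_k\int_{Q_k}f(y)\Big(\int\frac{|\Om(x-y)|\,l(Q_k)^s}{(l(Q_k)+|x-y|)^{d+s}}g(x)\,dx\Big)dy\lc\int_{\cup_kQ_k}f(y)\,M_\Om g(y)\,dy,$$
where $M_\Om g(y)=\sup_{r>0}r^{-d}\int_{|z|<r}|\Om(z)|\,|g(y+z)|\,dz$; the disjointness of the $Q_k$ collapses the sum to a single integral, the method of rotations yields $\|M_\Om g\|_{L^{q'}}\lc\|\Om\|_{L^1(\S^{d-1})}\|g\|_{L^{q'}}$ for $q'>1$, and H\"older closes the estimate for every $q<\infty$. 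So the right fix is not to establish $B<\infty$ (it cannot be done) but to prove the $q<\infty$ cases directly and confine the statement to that range.
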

\begin{proof}
If $q=1$, Lemma \ref{l:12disq1infty} just follows from the Fubini theorem. In fact, we have
\Bes
\|T_s(f)\|_{L^1(\R^d)}\leq\sum_{Q_k}\int_{Q_k}\Big|\int_{\R^d}\fr{|\Om(x-y)|\cdot l(Q_k)^{s}}{[l(Q_k)+|x-y|]^{d+s}}dx\Big|\cdot|f(y)|dy\lc\|\Om\|_{L^1(\S^{d-1})}\|f\|_{L^1(\R^d)},
\Ees
here we use that $Q_k$s are cubes disjoint each other. If $q=\infty$, applying the Fubini theorem again,
\Bes
|T_s(f)(x)|\leq\sum_{Q_k}\|f\|_{L^\infty(Q_k)}\sup_{x\in\R^d}\int_{Q_k}\fr{|\Om(x-y)|\cdot l(Q_k)^{s}}{[l(Q_k)+|x-y|]^{d+s}}dy\lc\|\Om\|_{L^1(\S^{d-1})}\|f\|_{L^\infty(\R^d)}.
\Ees
Now using the Marcinkiewicz interpolation theorem (see \eg \cite{Ste71}), one may get $T_s$ maps $L^q(\R^d)$ to $L^q(\R^d)$ for any $1<q<\infty$. Hence we complete the proof.
\end{proof}

In the following, we begin to give the proof of Theorem \ref{t:12}. We will first show our theorem for $r\geq1$ which is not quite complicated. Define the multi-indice set
\Bes
{\mathrm{MI}}=\Big\{(\fr{1}{q_1},\cdots,\fr{1}{q_n},\fr{1}{p}):\ \fr{1}{r}=\big(\sum_{i=1}^n\fr{1}{q_i}\big)+\fr{1}{p}, \fr{d}{d+n}\leq r<\infty, 1\leq q_1,\cdots,q_n,p\leq\infty\Big\}.
\Ees
If $\fr{d}{d+n}\leq r\leq 1$, we will divide the proof into several cases according whether $q_i$ is bigger than $d$ or smaller than $d$. And in this case, we will establish the weak type estimate at all boundary points of $\mathrm{MI}$. Although we don't take a rigorous classification, we will cover all cases for $\fr{d}{d+n}\leq r<\infty$, $1\leq q_1,\cdots,q_n\leq\infty$ and $1\leq p\leq\infty$.
Next we will use the multilinear interpolation to establish the strong type estimate in the interior of $\mathrm{MI}$ between $r>1$ and $r=\fr{d}{d+n}$. Finally, we give some examples to show that if $0<r<\fr{d}{d+n}$, there are no multilinear strong type estimates like \eqref{e:12mmulti} (or weak type estimates).
\vskip0.24cm

\subsection{Case: $1\leq r<\infty$}\label{s:1222}\quad
\vskip0.24cm
\begin{prop}\label{p:12strongr}
Let $1\leq r<+\infty$, $1<q_i\leq\infty$, $i=1,\cdots,n$, $1<p\leq\infty$. Then the strong type estimate \eqref{e:12mmulti} holds.
\end{prop}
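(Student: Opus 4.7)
The plan is to reduce the $n$-th order Calder\'on commutator to a finite sum of Christ--Journ\'e type $(n+1)$-linear commutators, each of which satisfies the hypotheses of the deep multilinear estimate of Seeger, Smart and Street \cite{SSS15}, and then to invoke that result. The key identity is the fundamental theorem of calculus applied componentwise:
\Bes
\fr{A_i(x)-A_i(y)}{|x-y|}=\sum_{j=1}^{d}\fr{(x-y)_j}{|x-y|}\int_0^1 \pari_j A_i\big(sx+(1-s)y\big)\,ds.
\Ees
Substituting this into \eqref{e:12com} and expanding the product over $i=1,\dots,n$ writes $\C[\nabla A_1,\cdots,\nabla A_n,f](x)$ as a finite sum, indexed by multi-indices $\vec{j}=(j_1,\dots,j_n)\in\{1,\dots,d\}^n$, of $(n+1)$-linear operators of the form
\Bes
\int_{\R^d} K_{\vec{j}}(x-y)\prod_{i=1}^{n}\Big(\int_0^1 \pari_{j_i}A_i\big(sx+(1-s)y\big)\,ds\Big)f(y)\,dy,
\Ees
with modified kernel $K_{\vec{j}}(z)=K(z)\prod_{i=1}^{n}\frac{z_{j_i}}{|z|}$.

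The next step is to verify that each $K_{\vec{j}}$ is a genuine Calder\'on--Zygmund kernel with the $n$-fold annular cancellation required by the Seeger--Smart--Street framework. The size bound $|K_{\vec{j}}(z)|\lc|z|^{-d}$ is immediate from \eqref{e:12kb}; the H\"older-type smoothness \eqref{e:12kr} is preserved because $z\mapsto \prod_i z_{j_i}/|z|$ is smooth and homogeneous of degree zero away from the origin, so products and differences can be handled by the Leibniz rule. Crucially, the moment condition $\int_{R<|z|<2R}K_{\vec{j}}(z)\,dz=0$ follows directly from \eqref{e:12K_2}, since $\prod_i z_{j_i}/|z|=(z/|z|)^{\alp}$ for a multi-index $\alp\in\Z_+^d$ with $|\alp|=n$.

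Once this reduction is in place, the plan is to apply the multilinear boundedness of the $(n+1)$-linear Christ--Journ\'e commutator from \cite{SSS15} to each summand, obtaining an estimate by $\prod_{i=1}^n\|\pari_{j_i}A_i\|_{L^{q_i}(\R^d)}\cdot\|f\|_{L^p(\R^d)}$, and then sum over $\vec{j}$ and dominate $\|\pari_{j_i}A_i\|_{L^{q_i}}\leq\|\nabla A_i\|_{L^{q_i}}$. At the corner $q_1=\cdots=q_n=\infty$, the functions $A_i$ are Lipschitz and each summand is a standard linear Calder\'on--Zygmund operator acting on $f$, which is bounded on $L^p(\R^d)$ for every $1<p<\infty$; this ``endpoint'' case can also be read off directly from the standard $T(1)$ theorem, and it serves as a sanity check.

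The main obstacle I anticipate is a bookkeeping one rather than a conceptual one: the multilinear estimates in \cite{SSS15} are stated for a specific normalization of the Christ--Journ\'e commutator with an $L^\infty$ input, so one must track the dependence on the kernel $K_{\vec{j}}$ (whose Calder\'on--Zygmund constants are uniform in $\vec{j}$) and combine that with standard multilinear interpolation to cover the full open range $1\leq r<\infty$, $1<q_i\leq\infty$, $1<p\leq\infty$ of Proposition \ref{p:12strongr}. The verification of the annular vanishing moment condition for $K_{\vec{j}}$ is the single place where the hypothesis \eqref{e:12K_2} on moments of order exactly $n$ is consumed, which is precisely why the $n$-th order Calder\'on commutator, and not some lower-order variant, fits into the framework.
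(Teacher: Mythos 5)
Your argument is essentially identical to the paper's: both rewrite each difference quotient $\fr{A_i(x)-A_i(y)}{|x-y|}$ via the mean value formula, expand the $n$-fold product to produce kernels $K(z)(z/|z|)^{\alp}$ with $|\alp|=n$, verify these are standard Calder\'on--Zygmund kernels using \eqref{e:12kb}, \eqref{e:12K_2}, \eqref{e:12kr}, and then invoke the Christ--Journ\'e type multilinear bound of \cite{SSS15}. The one small divergence is in the final paragraph: the paper simply asserts that \cite{SSS15} directly covers the stated range $1\le r<\infty$, $1<q_i\le\infty$, $1<p\le\infty$, so the extra interpolation step you anticipate is not needed there; otherwise your reduction matches the paper's proof step for step.
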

\begin{proof}
We do not plan to give a direct proof here. The proof relies on the recent deep results in \cite{SSS15}. In fact, by using the mean value formula, one may get
$$\fr{A_i(x)-A_i(y)}{|x-y|}=\int_{0}^1\Big\langle{\fr{x-y}{|x-y|}},{\nabla A_i(sx+(1-s)y)}\Big\rangle ds.$$
For each $i=1,\cdots, n$, plunge the above equality into $\C[\nabla A_1,\cdots,\nabla A_n, f](x)$ and write it as follows:
\Bes
\pv \int_{\R^d} K(x-y)\Big(\prod_{i=1}^n\Big[\sum_{j=1}^d\fr{x_j-y_j}{|x-y|}\int_0^1\pari_jA_i(sx+(1-s)y)ds\Big]\Big)\cdot f(y)dy.
\Ees

Then by the moment cancelation condition \eqref{e:12K_2}, the bound condition \eqref{e:12kb} and the regularity condition \eqref{e:12kr}, for any multi-indice $\alp\in\Z_+^d$ with  $|\alp|=n$, $K(x)({x}/{|x|})^{\alp}$ is a standard Calder\'on-Zygmund kernel. Therefore the proof reduces to show that the following operator
$$\C_{CJ}[a_1,\cdots,a_n,f](x)=\pv\int_{\R^d}k(x-y)(\prod_{i=1}^nm_{x,y}a_i)f(y)dy$$ maps $L^{q_1}(\R^d)\times\cdots\times L^{q_n}(\R^d)\times L^p(\R^d)$ to $L^r(\R^d)$, where $k$ is a standard Calder\'on-Zygmund kernel and $m_{x,y}a=\int_0^1a(sx+(1-s)y)dy$.
However, this estimate has been proved by A. Seeger, C. K. Smart and B. Street in \cite{SSS15}.
\end{proof}

\begin{prop}\label{p:12inftyweak}
Let $r=1$, $q_1=\cdots=q_n=\infty$, $p=1$. Then
\Bes
\|\C[\nabla A_1,\cdots,\nabla A_n, f]\|_{L^{1,\infty}(\R^d)}\lc\Big(\prod_{i=1}^n\|\nabla A_i\|_{L^{\infty}(\R^d)}\Big)\|f\|_{L^1(\R^d)}.
\Ees
\end{prop}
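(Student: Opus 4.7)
The plan is to view the operator, for fixed Lipschitz $A_1,\dots,A_n$, as a linear Calder\'on--Zygmund operator in $f$ with kernel
\[
\widetilde K(x,y) \;=\; K(x-y)\,\prod_{i=1}^{n}\frac{A_i(x)-A_i(y)}{|x-y|},
\]
and then apply the classical theorem deducing weak-type $(1,1)$ from $L^{2}$-boundedness plus H\"ormander regularity in $y$. The constants will track $\prod_{i=1}^{n}\|\nabla A_i\|_{L^\infty}$ automatically.

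First, I would obtain the $L^{2}$ bound by invoking Proposition \ref{p:12strongr} with the exponents $q_1=\cdots=q_n=\infty$ and $p=2$, which gives $r=2$ and
\[
\bigl\|\mathcal{C}[\nabla A_1,\dots,\nabla A_n,f]\bigr\|_{L^{2}(\R^d)}
\;\lc\;\Bigl(\prod_{i=1}^{n}\|\nabla A_i\|_{L^\infty(\R^d)}\Bigr)\,\|f\|_{L^{2}(\R^d)}.
\]
This handles the required starting hypothesis for the CZ machinery.

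Next I would check the size and smoothness of $\widetilde K$. Since each $A_i$ is Lipschitz with $\|\nabla A_i\|_{L^\infty}<\infty$, each ratio $F_i(x,y):=(A_i(x)-A_i(y))/|x-y|$ satisfies $|F_i(x,y)|\le \|\nabla A_i\|_{L^\infty}$, and together with \eqref{e:12kb} this gives $|\widetilde K(x,y)|\lc \prod_i\|\nabla A_i\|_{L^\infty}\,|x-y|^{-d}$. For H\"ormander's condition in $y$, I would decompose
\[
\widetilde K(x,y)-\widetilde K(x,x_0)
=\bigl[K(x-y)-K(x-x_0)\bigr]\prod_{i}F_i(x,y)
+K(x-x_0)\bigl[\textstyle\prod_{i}F_i(x,y)-\prod_{i}F_i(x,x_0)\bigr].
\]
The first piece is integrable over $|x-x_0|>2|y-x_0|$ by the regularity \eqref{e:12kr}, yielding a bound $\lc\prod_i\|\nabla A_i\|_{L^\infty}$. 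For the second piece I would telescope the product into sums of terms with a single difference $F_i(x,y)-F_i(x,x_0)$ multiplied by bounded factors, and then prove the key pointwise estimate
\[
\bigl|F_i(x,y)-F_i(x,x_0)\bigr|\;\lc\;\|\nabla A_i\|_{L^\infty}\,\frac{|y-x_0|}{|x-x_0|}
\qquad\text{for }|x-x_0|>2|y-x_0|,
\]
which follows by writing the common-denominator numerator as $[A_i(x_0)-A_i(y)]|x-x_0|+[A_i(x)-A_i(x_0)]\bigl[|x-x_0|-|x-y|\bigr]$ and using $\bigl||x-x_0|-|x-y|\bigr|\le|y-x_0|$ together with the Lipschitz bound on $A_i$. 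Combined with $|K(x-x_0)|\lc|x-x_0|^{-d}$, integration in $x$ over $|x-x_0|>2|y-x_0|$ gives a finite constant, completing the H\"ormander bound.

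With the $L^{2}$ bound and H\"ormander regularity in hand, the classical Calder\'on--Zygmund decomposition of $f\in L^{1}$ at height $\lambda$ yields the weak-type $(1,1)$ inequality with constant $\lc\prod_{i=1}^{n}\|\nabla A_i\|_{L^\infty}$, which is exactly the claim. The main (though still elementary) obstacle is the pointwise Lipschitz-regularity estimate for the ratios $F_i$; once this is in place, everything else is the standard passage from $L^{2}$ and H\"ormander to weak $(1,1)$.
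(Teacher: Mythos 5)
Your proposal is correct and follows essentially the same route as the paper: fix the Lipschitz functions $A_i$, treat $\mathcal{C}[\nabla A_1,\dots,\nabla A_n,\cdot]$ as a linear Calder\'on--Zygmund operator with kernel $\widetilde K(x,y)=K(x-y)\prod_i (A_i(x)-A_i(y))/|x-y|$, obtain the $L^2$ bound from Proposition~\ref{p:12strongr} with $p=2$, verify that $\widetilde K$ is a standard CZ kernel (size from \eqref{e:12kb} plus Lipschitz bounds, regularity from \eqref{e:12kr} plus telescoping and the pointwise Lipschitz estimate on the ratios $F_i$), and then invoke the standard CZ theorem for weak-type $(1,1)$. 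The only difference is one of exposition: you explicitly derive the H\"ormander-type estimate via the telescoping and the inequality $|F_i(x,y)-F_i(x,x_0)|\lc \|\nabla A_i\|_{L^\infty}|y-x_0|/|x-x_0|$, whereas the paper simply records that the kernel satisfies pointwise H\"older conditions with exponent $\delta$ and cites the standard theorem; both are valid.
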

\begin{proof}
When $q_1=\cdots=q_n=\infty$, $A_i$ is a Lipschitz function for $i=1,\cdots,n$. Fix all $A_i$. We may regard $\C[\nabla A_1,\cdots,\nabla A_n, f](x)$ as a linear function of $f$. Then the kernel
$$K(x,y)=:K(x-y)\Big(\prod_{i=1}^n\fr{A_i(x)-A_i(y)}{|x-y|}\Big)$$
is a standard Calder\'on-Zygmund kernel (see \eg \cite[Page 211, Definition 4.1.2]{Gra250})which in fact satisfies the boundedness condition $|K(x,y)|\lc(\prod_{i=1}^n\|\nabla A_i\|_{L^\infty(\R^d)})|x-y|^{-d}$ and the following regularity conditions
$$
|K(x_1,y)-K(x_2,y)|\lc(\prod_{i=1}^n\|\nabla A_i\|_{L^\infty(\R^d)})\frac{|x_1-x_2|^{\del}}{|x_1-y|^{d+\del}}\ \ \text{for}\ |x_1-y|>2|x_1-x_2|,$$
$$
|K(x,y_1)-K(x,y_2)|\lc(\prod_{i=1}^n\|\nabla A_i\|_{L^\infty(\R^d)})\frac{|y_1-y_2|^{\del}}{|x-y_1|^{d+\del}}\ \ \text{for}\ |x-y_1|>2|y_1-y_2|.$$
Therefore by Proposition \ref{p:12strongr} with $q_1=\cdots=q_n=\infty, p=2$ and the standard Calder\'on-Zygmund theory (see \eg \cite[Page 226, Theorem 4.2.2]{Gra250}), we may get that the operator $\C[\nabla A_1,\cdots,\nabla A_n, \cdot]$ is of weak type (1,1) with bound $\prod_{i=1}^n\|\nabla A_i\|_{L^\infty(\R^d)}$, thus we complete the proof.
\end{proof}
\vskip0.24cm
\subsection{Case: $d/(d+n)\leq r\leq 1$ and $d\leq q_1,\cdots,q_n\leq\infty$}\label{s:1223}\quad
\vskip0.24cm
In this subsection, we consider the case $d/(d+n)\leq r\leq 1$ and $d\leq q_1,\cdots,q_n\leq\infty$. Without loss of generality, we may suppose the first $q_1,\cdots,q_l>d$ and $q_{l+1},\cdots,q_n=d$ with $0\leq l\leq n$. Here when $l=0$, we mean all $q_1=\cdots=q_n=d$. The proof of $p=\infty$ is slight different from that of $1\leq p<\infty$. So we will give two propositions in the following. Let us see the case $1\leq p<\infty$ firstly. We will point out in the proof where it doesn't work for $p=\infty$. And the proof of the case $p=\infty$ will be given later.
\begin{prop}\label{p:12qibigd}
Let $d/(d+n)\leq r\leq1$, $d<q_1,\cdots,q_l\leq\infty$ and $q_{l+1}$, $\cdots$, $q_n=d$ with $0\leq l\leq n$, $1\leq p<\infty$. Then
\Be\label{e:12weakqgeqd}
\begin{split}
\|\C[\nabla A_1,&\cdots,\nabla A_n, f]\|_{L^{r,\infty}(\R^d)}\\
&\lc\Big(\prod_{i=1}^l\|\nabla A_i\|_{L^{q_i}(\R^d)}\Big)\Big(\prod_{i=l+1}^{n}\|\nabla A_i\|_{L^{d,1}(\R^d)}\Big)\|f\|_{L^p(\R^d)},
\end{split}
\Ee
where $L^{d,1}(\R^d)$ is the standard Lorentz space.
\end{prop}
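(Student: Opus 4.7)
The plan is to prove the weak-type inequality by combining an exceptional-set construction based on the Mary Weiss maximal operator with a Calder\'on--Zygmund decomposition of $f$. By homogeneity normalize so that $\prod_{i=1}^{l}\|\nabla A_i\|_{L^{q_i}}\prod_{i=l+1}^{n}\|\nabla A_i\|_{L^{d,1}}\|f\|_{L^p}=1$, and fix $\lambda>0$; the goal is to show $|\{x:|\mathcal{C}[\nabla A_1,\ldots,\nabla A_n,f](x)|>\lambda\}|\lesssim \lambda^{-r}$. For each $i$ set $E_i=\{\mathcal{M}(\nabla A_i)>c\lambda^{r/q_i}\}$; Lemma \ref{l:mw} (for $i\le l$, where $q_i>d$) and Lemma \ref{l:11md} (for $i>l$, where $q_i=d$) give $|E_i|\lesssim \lambda^{-r}$. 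Next, perform a Calder\'on--Zygmund decomposition of $f$ at height $c\lambda^{r/p}$, writing $f=g+b$ with $b=\sum_k b_k$, each $b_k$ supported on a dyadic cube $Q_k$, the $Q_k$ pairwise disjoint, $\int b_k=0$, $\|b_k\|_{L^1}\lesssim \lambda^{r/p}|Q_k|$, $\|g\|_{L^\infty}\lesssim \lambda^{r/p}$, and $\sum_k|Q_k|\lesssim \lambda^{-r}$. With $Q_k^{*}=8\sqrt{d}\,Q_k$ and $\Omega^{*}=\bigcup_k Q_k^{*}$, the exceptional set $E=E_1\cup\cdots\cup E_n\cup\Omega^{*}$ satisfies $|E|\lesssim \lambda^{-r}$; it remains to control the level set inside $E^c$.

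The key pointwise bound is that for $x\in E^c$ and every $y\in\mathbb R^d$, the very definition of $\mathcal{M}$ yields $|A_i(x)-A_i(y)|/|x-y|\le c\lambda^{r/q_i}$. Build global Lipschitz extensions $\tilde A_i$ of $A_i|_{E_i^c}$ (by a McShane-type formula) satisfying $\|\nabla\tilde A_i\|_{L^\infty}\lesssim \lambda^{r/q_i}$, and split $\mathcal{C}[\nabla A_1,\ldots,f]=\mathcal{C}[\nabla A_1,\ldots,g]+\mathcal{C}[\nabla A_1,\ldots,b]$. For the good part, fix $p'>\max(1,p)$ and apply Proposition \ref{p:12strongr} to $\mathcal{C}[\nabla\tilde A_1,\ldots,\nabla\tilde A_n,g]$ with all Lipschitz exponents equal to $\infty$ and target $L^{p'}$; using $\|g\|_{L^{p'}}\le\|g\|_{L^\infty}^{1-p/p'}\|g\|_{L^p}^{p/p'}\lesssim \lambda^{r/p-r/p'}$ together with $r\sum_i 1/q_i=1-r/p$, one obtains $\|\mathcal{C}[\nabla\tilde A_1,\ldots,\nabla\tilde A_n,g]\|_{L^{p'}}\lesssim \lambda^{1-r/p'}$, so Chebyshev produces a level-set measure $\lesssim \lambda^{-r}$. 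The remainder $\mathcal{C}[\nabla A_1,\ldots,g]-\mathcal{C}[\nabla\tilde A_1,\ldots,g]$ on $E^c$, obtained by multilinear telescoping, contains factors $(A_i-\tilde A_i)(y)/|x-y|$ supported on $y\in E_i$ and pointwise bounded by $\lambda^{r/q_i}$; after a Whitney decomposition of $E_i$ this remainder is absorbed using Lemma \ref{l:12disq1infty} together with $|E_i|\lesssim \lambda^{-r}$.

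The main obstacle is the bad part. For $x\in E^c\subset(\Omega^{*})^c$ and each $k$, the mean-zero property $\int b_k=0$ lets us symmetrize the integrand at the center $y_k$ of $Q_k$, so that it becomes $K(x-y)\prod_i\alpha_i-K(x-y_k)\prod_i\beta_i$ with $\alpha_i=(A_i(x)-A_i(y))/|x-y|$ and $\beta_i=(A_i(x)-A_i(y_k))/|x-y_k|$. Write this as $(K(x-y)-K(x-y_k))\prod_i\alpha_i+K(x-y_k)(\prod_i\alpha_i-\prod_i\beta_i)$; the first piece exploits the regularity \eqref{e:12kr} for an $\ell(Q_k)^\delta/|x-y_k|^{d+\delta}$ decay against the pointwise product bound $\lambda^{r\sum_i 1/q_i}$. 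For the second piece, telescope $\prod_i\alpha_i-\prod_i\beta_i$ and use the identity $\alpha_i-\beta_i=\tfrac{A_i(y_k)-A_i(y)}{|x-y|}+(A_i(x)-A_i(y_k))(\tfrac{1}{|x-y|}-\tfrac{1}{|x-y_k|})$; the second summand contributes an $\ell(Q_k)/|x-y_k|$ factor directly, and the first summand, rewritten via $A_i(y_k)-A_i(y)=\int_0^1 \nabla A_i(y+t(y_k-y))\cdot(y_k-y)\,dt$, also supplies $\ell(Q_k)/|x-y_k|$ decay at the cost of an averaged $|\nabla A_i|$ along the segment inside $Q_k$. Summing over $k$ with $\|b_k\|_{L^1}\lesssim \lambda^{r/p}|Q_k|$ and recognizing the resulting expression as a disjoint-cube sum of the form treated in Lemma \ref{l:12disq1infty} (after absorbing $|K|\le|\cdot|^{-d}\|K(\cdot/|\cdot|)\|_{L^\infty(\S^{d-1})}$ into an $L^1(\S^{d-1})$ factor) yields $\int_{E^c}\sum_k|\mathcal{C}[\nabla A_1,\ldots,b_k](x)|\,dx\lesssim \lambda\sum_k|Q_k|\lesssim \lambda^{1-r}$ via the identity $r\sum_i 1/q_i+r/p=1$, and Markov converts this to the level-set bound $\lesssim \lambda^{-r}$. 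Combined with $|E|\lesssim \lambda^{-r}$ and the good-part estimate, this completes the proof of \eqref{e:12weakqgeqd}.
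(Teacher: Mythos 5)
Your overall plan (exceptional set from the Mary Weiss maximal operator plus Lipschitz extensions and an application of the strong-type estimate to the ``good'' piece) is in the spirit of the paper, but a key structural choice differs and it creates a genuine gap. The paper never performs a Calder\'on--Zygmund decomposition of $f$. Instead it constructs an open set $G_\lambda$ containing $\bigcup_i\{\M(\nabla A_i)>\lambda^{r/q_i}\}$, writes $f=f\chi_{(G_\lambda)^c}+f\chi_{G_\lambda}$, and Whitney--decomposes $G_\lambda$ to handle the second piece. The reason this matters is that the Whitney cubes $Q_k$ of $G_\lambda$ come with companion centers $y_k\in(G_\lambda)^c$ where $\M(\nabla A_i)(y_k)\le\lambda^{r/q_i}$, so that for $y\in Q_k$ one has the clean decay
\[
\frac{|A_i(y_k)-A_i(y)|}{|x-y|}\le \M(\nabla A_i)(y_k)\,\frac{|y-y_k|}{|x-y|}\lesssim \lambda^{r/q_i}\,\frac{l(Q_k)}{|x-y|}.
\]
This single inequality is what lets the paper invoke Lemma~\ref{l:12disq1infty}.

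In your ``bad part'' the cubes $Q_k$ are the Calder\'on--Zygmund cubes of $f$, which bear no relation to the exceptional sets $E_i$. The centers $y_k$ may well lie inside $E_i$, so you cannot bound $\frac{|A_{i_0}(y_k)-A_{i_0}(y)|}{|x-y|}$ by $\lambda^{r/q_{i_0}}\frac{l(Q_k)}{|x-y_k|}$ as the paper does. Your substitute is the line-integral representation, which yields a factor $\int_0^1|\nabla A_{i_0}(y+t(y_k-y))|\,dt$; but $\nabla A_{i_0}$ is uncontrolled on $Q_k$ (we know only $\nabla A_{i_0}\in L^{q_{i_0}}$ with $q_{i_0}\ge d$, and $Q_k$ can sit entirely inside $E_{i_0}$), so this factor can be arbitrarily large. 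The subsequent appeal to Lemma~\ref{l:12disq1infty} does not apply: that lemma moves an ordinary $L^q$ function through a disjoint-cube kernel, whereas here the ``input'' is a $k$-dependent average of $|\nabla A_{i_0}|$ over segments inside $Q_k$ multiplied against $b_k$, and there is no usable bound. In short, without some $y_k$-type reference point that is \emph{known} to lie outside $E_{i_0}$, the required $l(Q_k)$-decay in the bad part is not available.

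A secondary, smaller gap occurs in the remainder term $\mathcal{C}[\nabla A,\dots,g]-\mathcal{C}[\nabla\tilde A,\dots,g]$: the pointwise bound $|(A_{i_0}-\tilde A_{i_0})(y)|/|x-y|\lesssim\lambda^{r/q_{i_0}}$ alone gives no decay. You need the sharper estimate $|(A_{i_0}-\tilde A_{i_0})(y)|\lesssim\lambda^{r/q_{i_0}}\,\mathrm{dist}(y,E_{i_0}^c)$ for $y\in E_{i_0}$ (which follows from comparing to a nearby point of $E_{i_0}^c$); combined with the Whitney decomposition of $E_{i_0}$ this produces the $l(Q)/|x-y|$ factor required by Lemma~\ref{l:12disq1infty}. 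If you patch this in and, more importantly, replace the $A_i$ in your symmetrized bad-part integrand by the Lipschitz extensions $\tilde A_i$ (treating the difference as an additional remainder of the same kind), the scheme can be made to close; but that is a substantial departure from what is written. The paper sidesteps all of this by coupling the decomposition of $f$ to the exceptional set $G_\lambda$, which is the essential idea missing from your proposal.
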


\begin{proof}
By using a standard limiting argument, we only need to show that when $A_i$ ($i=1,\cdots,n$) and $f$ are $C^\infty$ functions with compact supports, the following inequality
\Bes
\begin{split}
m(\{x\in\R^d:&|\C[\nabla A_1,\cdots,\nabla A_n, f](x)|>\lam\})\\
&\lc\lam^{-r}\Big(\prod_{i=1}^l\|\nabla A_i\|^r_{L^{q_i}(\R^d)}\Big)\Big(\prod_{i=l+1}^{n}\|\nabla A_i\|^r_{L^{d,1}(\R^d)}\Big)\|f\|^r_{L^p(\R^d)},
\end{split}
\Ees
holds for any $\lam>0$.
By a simple scaling argument, we may assume that
$$\|\nabla A_i\|_{L^{q_i}(\R^d)}=\|\nabla A_j\|_{L^{d,1}(\R^d)}=\|f\|_{L^p(\R^d)}=1,$$
for $i=1,\cdots,l$ and $j=l+1,\cdots,n$.
Fix $\lam>0$. For convenience we set
\Be\label{e:12elam}
E_\lam=\{x\in\R^d:|\mathcal{C}[\nabla A_1,\cdots,\nabla A_n, f](x)|>\lam\}.
\Ee

We need to show $|E_\lam|\lc\lam^{-r}.$
First suppose that all $q_1,\cdots, q_l<\infty$. Once we have understood the proof in this situation, we can modify the proof to the other case that there exist some $q_i=\infty$ for $i=1,\cdots,l$. We shall show how to do this in the last part of the proof.
Define the {\it exceptional set}
\Bes
\begin{split}
J_{i,\lam}=\big\{x\in\R^d:\M (\nabla A_i)(x)>\lam^{\fr{r}{q_i}}\big\}.
\end{split}
\Ees
for $i=1,\cdots,n$. Here it should be pointed out that if $q_i=\infty$, the above definition is meaningless. Therefore we need to assume all $q_i<\infty$ firstly.
From Lemma \ref{l:mw} and Lemma \ref{l:11md}, $\M$ maps $L^p(\R^d)$ to itself for $p>d$ and maps $L^{d,1}(\R^d)$ to $L^{d,\infty}(\R^d)$, i.e.
\Be\label{e:11jlam}
\begin{split}
|J_{i,\lam}|\lc\lam^{-r}\|\nabla A_i\|^{q_i}_{L^{q_i}(\R^d)}&=\lam^{-r}, \ \ i=1,\cdots,l;\\
|J_{j,\lam}|\lc\lam^{-r}\|\nabla A_j\|^{d}_{L^{d,1}(\R^d)}&=\lam^{-r}, \ \ j=l+1,\cdots,n.
\end{split}
\Ee

Set $J_\lam=\cup_{i=1}^n J_{i,\lam}$. Choose an open set $G_\lam$ which satisfies the following conditions:
(1) $J_\lam\subset G_\lam$;
(2) $m(G_\lam)\leq2|J_\lam|$.
By the property \eqref{e:11jlam} of $J_{i,\lam}$, we see that $m(G_\lam)\lc\lam^{-r}$. Next making a Whitney decomposition of $G_\lam$ (see \eg \cite{Gra249}), one may get a family of disjoint dyadic cubes $\{Q_k\}_k$ such that
\begin{enumerate}[(i).]
\item \quad $G_\lam=\bigcup_{k=1}^\infty Q_k$;
\item \quad $\sqrt{d}\cdot l(Q_k)\leq dist(Q_k,(G_\lam)^c)\leq4\sqrt{d}\cdot l(Q_k).$
\end{enumerate}
With those properties (i) and (ii), for each $Q_k$, we could construct a larger cube $Q_k^*$ so that $Q_k\subset Q_k^*$, $Q_k^*$ is centered at $y_k$ and $y_k\in (G_\lam)^c$, $|Q_k^*|\approx|Q_k|$. By the property (ii) above, the distance between $Q_k$ and $(G_\lam)^c$ equals to $Cl(Q_k)$.
Therefore by the construction of $Q_k^*$ and $y_k$, one may get
\Be\label{e:12whitney}
dist(y_k,Q_k)\approx l(Q_k).
\Ee

Now we return to give an estimate of $E_\lam$. Split $f$ into two parts $f=f_1+f_2$ where $f_1(x)=f(x)\chi_{(G_\lam)^c}(x)$ and $f_2(x)=f(x)\chi_{G_\lam}(x)$. By the definition of $J_\lam$, when restricted on $(G_\lam)^c$, $A_i$ is a Lipschitz function with $\|\nabla A_i\|_{L^\infty((G_\lam)^c)}\leq\lam^{\fr{r}{q_i}}$ for $i=1,\cdots,n$. Let $\tilde{A}_i$ stand for the Lipschitz extension of $A_i$ from $(G_\lam)^c$ to $\R^d$ (see \cite[page 174, Theorem 3]{Ste70}) so that
for each $i=1,\cdots,n$,
$$\tilde{A}_i(y)=A_i(y)\ \ \text{if} \ y\in (G_\lam)^c;$$
$$\big|\tilde{A}_i(x)-\tilde{A}_i(y)\big|\leq\lam^{\fr{r}{q_i}}|x-y|\ \ \text{for all}\ x,y\in\R^d.$$

Since the operator $\mathcal{C}[\cdots,\cdot]$ is multilinear, we split $E_\lam$ as three terms and give estimates as follows:
\Be\label{e:12spmu}
\begin{split}
m(\{x&\in\R^d: |\mathcal{C}[\nabla A_1,\cdots,\nabla A_n,f](x)|>\lambda\})\\
&\leq m(10G_\lam)+m\big(\{x\in (10G_\lam)^c:|\mathcal{C}[\nabla A_1,\cdots,\nabla A_n,f_1](x)|>\lambda/2\}\big)\\
&\ \ \ \ +m\big(\{x\in (10G_\lam)^c:|\mathcal{C}[\nabla A_1,\cdots,\nabla A_n,f_2](x)|>\lambda/2\}\big).
\end{split}
\Ee

The first term above satisfies $|10G_\lam|\lc\lam^{-r}$, which is the required bound. In the following, we only consider $x\in(10G_\lam)^c$. By the definition of $f_1$, one may see that
$$\mathcal{C}[\nabla A_1,\cdots,\nabla A_n,f_1](x)=\mathcal{C}[\nabla\tilde{A}_1,\cdots,\nabla\tilde{A}_n,f_1](x).$$
With this equality in hand, Proposition \ref{p:12strongr} ($1<p<\infty$) and Proposition \ref{p:12inftyweak} ($p=1$) imply
\Be\label{e:12qbigd}
\begin{split}
m\big(\big\{x&\in (10G_\lam)^c: |\mathcal{C}[\nabla A_1,\cdots,\nabla A_n,f_1](x)|>{\lam}/{2}\big\}\big)\\
&=m\big(\big\{x\in (10G_\lam)^c: |\mathcal{C}[\nabla \tilde{A}_1,\cdots,\nabla\tilde{A}_n,f_1](x)|>{\lam}/{2}\big\}\big)\\
&\ \ \lc \lam^{-p}\Big(\prod_{i=1}^n\|\nabla\tilde{A}_i\|^p_{L^\infty(\R^d)}\Big)\|f_1\|^p_{L^p(\R^d)}
\lc\lam^{-p+p\sum_{i=1}^n\fr{r}{q_i}}=\lam^{-r}.
\end{split}
\Ee
If $p=\infty$, the above method does not work.
We will show how to prove this kind of estimate in the next proposition.

Let us turn to $\mathcal{C}[\nabla A_1,\cdots,\nabla A_n, f_2](x)$. Define $\N_i^j=\{i,i+1,\cdots,j\}$. Recall our construction of $G_\lam$, $y_k$, $Q_k$ and $Q_k^*$ in the paragraph above \eqref{e:12whitney}. Then we can write $f_2=\sum_k f\chi_{Q_k}$. Therefore we may get
$$\mathcal{C}[\nabla A_1,\cdots,\nabla A_n, f_2](x)=\sum_k\mathcal{C}[\nabla A_1,\cdots,\nabla A_n, f\chi_{Q_k}](x).$$
Below we should carefully study $\prod_{i=1}^n\fr{{A}_i(x)-A_i(y)}{|x-y|}$. We will separate it into several terms and then give an estimate for each term.  Write
\Bes
\begin{split}
&\ \ \ \ \prod_{i=1}^n\fr{{A}_i(x)-A_i(y)}{|x-y|}\\&=\prod_{i=1}^n\Big(\fr{\tilde{A}_i(x)-\tilde{A}_i(y)}{|x-y|}+\fr{\tilde{A}_i(y)-\tilde{A}_i(y_k)}{|x-y|}+\fr{A_i(y_k)-A_i(y)}{|x-y|}\Big)\\
&=\sum\Big(\prod_{i\in N_1}\fr{\tilde{A}_i(x)-\tilde{A}_i(y)}{|x-y|}\Big)\Big(\prod_{i\in N_2}\fr{\tilde{A}_i(y)-\tilde{A}_i(y_k)}{|x-y|}\Big)\Big(\prod_{i\in N_3}\fr{{A}_i(y_k)-{A}_i(y)}{|x-y|}\Big)\\
&=I(x,y)+II(x,y,y_k),
\end{split}
\Ees
where in the third equality we divide $\N_{1}^n=N_1\cup N_2\cup N_3$ with $N_1$, $N_2$, $N_3$ non intersecting each other; and $I(x,y)$, $II(x,y,y_k)$, are defined as follows
\Be\label{e:12axyqbigd}
\begin{split}
I(x,y)=&\prod_{i=1}^n\fr{\tilde{A}_i(x)-\tilde{A}_i(y)}{|x-y|},\\
II(x,y,y_k)=&\sum_{N_1\subsetneq\N_{1}^n}\Big(\prod_{i\in N_1}\fr{\tilde{A}_i(x)-\tilde{A}_i(y)}{|x-y|}\Big)\times\\
&\times
\Big(\prod_{i\in N_2}\fr{\tilde{A}_i(y)-\tilde{A}_i(y_k)}{|x-y|}\Big)\Big(\prod_{i\in N_3}\fr{{A}_i(y_k)-{A}_i(y)}{|x-y|}\Big).
\end{split}
\Ee
By the above decomposition, we in fact divide $\mathcal{C}[\nabla A_1,\cdots,\nabla A_n, f\chi_{Q_k}](x)$ into $3^n$ terms. We separate these terms into two parts according $I$ and $II$.

\emph{Estimate of $\mathcal{C}[\cdots,\cdot]$ related to $I$.}  This estimate is similar to \eqref{e:12qbigd}. In fact, in this case there is only one term $\C[\nabla\tilde{A}_1,\cdots,\nabla\tilde{A}_n,f_2]$. Then by Proposition \ref{p:12strongr} ($1<p<\infty$) and Proposition \ref{p:12inftyweak} ($p=1$), we get
\Bes
\begin{split}
m\big(\big\{x\in& (10G_\lam)^c: |\mathcal{C}[\nabla \tilde{A}_1,\cdots,\nabla\tilde{A}_n,f_2](x)|>{\lam}/{2}\big\}\big)\\
&\lc \lam^{-p}\Big(\prod_{i=1}^n\|\nabla\tilde{A}_i\|^p_{L^\infty(\R^d)}\Big)\|f_2\|^p_{L^p(\R^d)}
\lc\lam^{-p+p\sum_{i=1}^n\fr{r}{q_i}}=\lam^{-r}.
\end{split}
\Ees
If $p=\infty$, the above argument may not work again.

\emph{Estimate of $\mathcal{C}[\cdots,\cdot]$ related to $II$.}
It suffices to consider one term $\mathcal{C}[\cdots,\cdot]$ related to $II$ in which $N_1$ is a proper subset of $\N_1^n$.
In this case, without loss of generality, we may assume $N_1=\{1,\cdots,v\}$, $N_2=\{v+1,\cdots,m\}$ and $N_3=\{m+1,\cdots, n\}$ with $0\leq v\leq m\leq n$ and $v<n$. Here when $v=0$, it means that $N_1=\emptyset$; when $v=m$, $N_2=\emptyset$; when $m=n$,  $N_3=\emptyset$. With these notation, one can easily see that $N_1$ is a proper subset of $\N_1^n$.
By a slight abuse of notation, we still use $II(x,y,y_k)$ to represent one term related to $N_1$, $N_2$ and $N_3$ in \eqref{e:12axyqbigd} and use $H_{II}(x)$ to represent $\mathcal{C}[\cdots,\cdot]$ related to ${II}(x,y,y_k)$, i.e.
$$H_{II}(x)=\sum_k\int_{Q_k}K(x-y)II(x,y,y_k)f(y)dy.$$
Notice that $y_k$ lies in the $(G_\lam)^c$, thus $y_k\in (J_{i,\lam})^c$. Therefore we get
\Be
\M(\nabla A_i)(y_k)\leq\lam^{\fr{r}{q_i}}, \ \text{for $i=m+1,\cdots,n$.}
\Ee
With the above fact and $\tilde{A}_i$ is a Lipschitz function with bound $\lam^{r/q_i}$ for $i=1,\cdots,m$, we get
\Bes
\begin{split}
|II(x,y,y_k)|&\lc\lam^{\sum_{i=1}^m\fr{r}{q_i}}\fr{|y-y_k|^{n-v}}{|x-y|^{n-v}}\prod_{i=m+1}^n\M(\nabla A_i)(y_k)\\
&\lc\lam^{\sum_{i=1}^n\fr{r}{q_i}}\fr{|y-y_k|^{n-v}}{|x-y|^{n-v}}.
\end{split}
\Ees
Notice that we only consider $x\in(10G_\lam)^c$, then for $y\in Q_k$,  $|x-y|\geq 2l(Q_k)\approx|y-y_k|$ by \eqref{e:12whitney}. Combining the above discussion with \eqref{e:12kb}, we get
\Be\label{e:12qgeqdinfty}
\begin{split}
|H_{II}(x)|&\leq\sum_k\int_{Q_k}|K(x-y)|\cdot|II(x,y,y_k)|\cdot|f(y)|dy\\
&\lc\lam^{\sum_{i=1}^n\fr{r}{q_i}}\sum_k\int_{Q_k}\fr{l(Q_k)^{n-v}}{[l(Q_k)+|x-y|]^{d+n-v}}|f(y)|dy.
\end{split}
\Ee

Applying the Chebyshev inequality with the above estimate, and utilizing Lemma \ref{l:12disq1infty} with $|\Om|\equiv1$ (note that $n-v\geq1$), we finally get
\Bes
\begin{split}
m(\{x\in(10G_\lam)^c: |H_{II}(x)|>\lam\})&\leq\lam^{-p+\sum_{i=1}^n\fr{rp}{q_i}}\int_{(10G_\lam)^c}|T_{n-v}f(x)|^pdx\\
&\lc\lam^{-r}\|f\|^p_{L^p(\R^d)}.
\end{split}
\Ees
Hence we complete the proof of the term $II$. If $p=\infty$, the last argument above may not work and a little different discussion should be involved, see the next proposition.

Finally, we add some word about how to modify the above proof to the case $q_i=\infty$ for some $i=1,\cdots,l$. We may suppose only $q_1=\cdots=q_u=\infty$ with $1\leq u\leq l$. Thus $A_1$, $\cdots$, $A_u$ are Lipschitz functions which in fact are nice functions. Then we just fix $A_1, \cdots, A_u$ in the rest of the proof. We only make a construction of {\it exceptional set\/} for $A_{u+1}, \cdots, A_n$ and study $\prod_{i={u+1}}^n\fr{A_i(x)-A_i(y)}{|x-y|}$ by using the same way as we have done previously.
After that utilizing $A_1$, $\cdots$, $A_u$ are Lipschitz functions to deal with all estimates involved with $A_1,\cdots,A_u$,
we may get the required bound.
\end{proof}

\begin{prop}\label{p:12qleqinfty}
Let $d/(d+n)\leq r\leq1$, $d<q_1,\cdots,q_l\leq\infty$ and $q_{l+1}$, $\cdots$, $q_n=d$ with $0\leq l\leq n$, $p=\infty$. Then the weak type estimate \eqref{e:12weakqgeqd} holds.
\end{prop}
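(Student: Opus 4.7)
The plan is to run the scheme of Proposition~\ref{p:12qibigd} but to replace Chebyshev's inequality with power $p$ — meaningless when $p=\infty$ — by a strong-type $L^1$ bound produced from Proposition~\ref{p:12strongr} with an overshot exponent vector. As in Proposition~\ref{p:12qibigd}, I would first normalize $\|\nabla A_i\|_{L^{q_i}(\R^d)}=\|\nabla A_i\|_{L^{d,1}(\R^d)}=\|f\|_{L^\infty(\R^d)}=1$, form the exceptional sets $J_{i,\lam}=\{\M(\nabla A_i)>\lam^{r/q_i}\}$ so that $|J_{i,\lam}|\lc\lam^{-r}$ via Lemmas~\ref{l:mw} and~\ref{l:11md}, open them up to $G_\lam\supset J_\lam=\bigcup_i J_{i,\lam}$ with $|G_\lam|\lc\lam^{-r}$, and perform the Whitney decomposition $G_\lam=\bigcup_k Q_k$ with centres $y_k\in(G_\lam)^c$ satisfying $\dist(y_k,Q_k)\approx l(Q_k)$. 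Splitting $f=f_1+f_2$ with $f_1=f\chi_{(G_\lam)^c}$, decomposing $\prod_i\fr{A_i(x)-A_i(y)}{|x-y|}$ on the support of $f_2$ into the purely Lipschitz piece $I$ and the mixed piece $II$ as in~\eqref{e:12axyqbigd}, and discarding the trivial $m(10G_\lam)\lc\lam^{-r}$ leaves two kinds of objects to bound on $(10G_\lam)^c$: $\C[\nabla\tilde A_1,\ldots,\nabla\tilde A_n,g]$ with $g=f_1$ or $g=f_2$, and the mixed piece $H_{II}$.

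The new ingredient is this. Proposition~\ref{p:12strongr} permits $p=\infty$ as long as $r\geq 1$, so I would apply it with the rescaled exponents $q_i'=q_i/r$ when $q_i<\infty$, $q_i'=\infty$ otherwise, and $p'=\infty$; then
\Bes
\sum_{i=1}^{n}\fr{1}{q_i'}+\fr{1}{p'}=r\sum_{q_i<\infty}\fr{1}{q_i}=1,
\Ees
so $r'=1$, and every $q_i'>1$ since $q_i\geq d\geq 2$ and $r\leq 1$. Because the Lipschitz extension $\tilde A_i$ agrees with $A_i$ on $(G_\lam)^c$ and satisfies $\|\nabla\tilde A_i\|_{L^\infty(\R^d)}\leq\lam^{r/q_i}$, splitting $\int_{\R^d}|\nabla\tilde A_i|^{q_i'}$ over $(G_\lam)^c$ — using $|\nabla A_i|^{q_i'}\leq\lam^{r(q_i'-q_i)/q_i}|\nabla A_i|^{q_i}$ together with $\|\nabla A_i\|_{L^{q_i}}\lc 1$, invoking the continuous embedding $L^{d,1}(\R^d)\hookrightarrow L^d(\R^d)$ when $q_i=d$ — and over $G_\lam$ (using $|G_\lam|\lc\lam^{-r}$) each yields a contribution bounded by $\lam^{1-r}$, whence $\|\nabla\tilde A_i\|_{L^{q_i'}(\R^d)}\lc\lam^{r(1-r)/q_i}$ and $\prod_i\|\nabla\tilde A_i\|_{L^{q_i'}(\R^d)}\lc\lam^{1-r}$. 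Applying Proposition~\ref{p:12strongr} then produces
\Bes
\|\C[\nabla\tilde A_1,\ldots,\nabla\tilde A_n,g]\|_{L^1(\R^d)}\lc\lam^{1-r}\|g\|_{L^\infty(\R^d)}\lc\lam^{1-r}
\Ees
for any $g\in L^\infty(\R^d)$ with $\|g\|_\infty\leq 1$, and Chebyshev with power $1$ produces the desired $m(\{|\cdot|>\lam/2\})\lc\lam^{-r}$. This takes care of the $f_1$-contribution — using $\C[\nabla A_1,\ldots,\nabla A_n,f_1]=\C[\nabla\tilde A_1,\ldots,\nabla\tilde A_n,f_1]$ on $(G_\lam)^c$ — and the $I$-part of the $f_2$-contribution, both of which were handled by~\eqref{e:12qbigd} only under $p<\infty$.

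The $II$-piece is easier once the above replacement is available: the pointwise bound~\eqref{e:12qgeqdinfty} carries over verbatim and gives $|H_{II}(x)|\lc\lam\,T_{n-v}f(x)$ on $(10G_\lam)^c$, because the prefactor $\lam^{\sum r/q_i}$ equals $\lam$ when $p=\infty$. Rather than invoking Chebyshev with power $p$, I would apply Lemma~\ref{l:12disq1infty} with $q=1$ and $|\Om|\equiv 1$; Fubini together with the fact that $\int_{\R^d}l(Q_k)^{n-v}[l(Q_k)+|x-y|]^{-d-n+v}dx$ is an absolute constant (since $n-v\geq 1$) gives
\Bes
\|T_{n-v}f\|_{L^1(\R^d)}\lc\int_{G_\lam}|f(y)|\,dy\leq\|f\|_{L^\infty(\R^d)}|G_\lam|\lc\lam^{-r},
\Ees
after which Chebyshev with power $1$ completes the bound for $H_{II}$. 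The principal obstacle, as I see it, is finding the right substitute for the missing $\|f\|_{L^p}^p$ factor in~\eqref{e:12qbigd}: the trick is to pay for it through the overshoot $q_i\mapsto q_i/r$ in the Lipschitz extension, which inflates $\|\nabla\tilde A_i\|_{L^{q_i'}}$ by exactly the power of $\lam$ needed to reduce everything to the $r=1$ endpoint of Proposition~\ref{p:12strongr}. The case in which some $q_i=\infty$ is handled as in the last paragraph of the proof of Proposition~\ref{p:12qibigd}: those $A_i$ are kept fixed as Lipschitz functions from the start and the preceding argument is run only on the remaining indices.
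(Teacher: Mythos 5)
Your argument is correct, and it runs the same exceptional-set / Whitney / $I$--$II$ scheme as the paper's Proposition~\ref{p:12qleqinfty}, but it implements all three modifications that are needed when $p=\infty$ in a somewhat different way, so the comparison is worth spelling out.

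For the $f_1$-term and the $I$-part of the $f_2$-term, the paper chooses generic exponents $q_i<\tilde q_i<\infty$ with $\sum 1/\tilde q_i=1/\tilde r$ and $\tilde r>1$, applies Proposition~\ref{p:12strongr} with target $L^{\tilde r}$ and then Chebyshev, and for the $I$-piece separately switches to the weak-$(1,1)$ bound of Proposition~\ref{p:12inftyweak} together with $\|f_2\|_{L^1}\lc\|f\|_{L^\infty}|G_\lam|\lc\lam^{-r}$. You instead make the sharp choice $q_i'=q_i/r$ (keeping $q_i'=\infty$ where $q_i=\infty$), which forces $r'=1$, gives $\|\nabla\tilde A_i\|_{L^{q_i'}}\lc\lam^{r(1-r)/q_i}$ (each half of the split over $(G_\lam)^c$ and $G_\lam$ contributing $\lam^{1-r}$), and then the single estimate $\|\C[\nabla\tilde A_1,\ldots,\nabla\tilde A_n,g]\|_{L^1}\lc\lam^{1-r}\|g\|_{L^\infty}$ from Proposition~\ref{p:12strongr} covers both $g=f_1$ and $g=f_2$ at once; this is a modest unification and the computations are a touch cleaner than the paper's. (One small stylistic point in your favor: you phrase the estimate in terms of the genuine Lipschitz extension $\tilde A_i$ rather than $A_i\chi_{(G_\lam)^c}$, which sidesteps the jump at $\partial G_\lam$ that the paper's notation glosses over.) For $H_{II}$, the paper applies Lemma~\ref{l:12disq1infty} with $q=\infty$ to get the pointwise bound $|H_{II}(x)|\leq C_d\lam$ on $(10G_\lam)^c$ and then makes the superlevel set literally empty by enlarging the threshold to $C_0\lam$; you apply the same lemma with $q=1$, use that the input is effectively $f\chi_{G_\lam}$ with $L^1$ norm $\lc\lam^{-r}$, and finish by Chebyshev with power~$1$. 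Both routes are correct; the paper's gives a slightly stronger pointwise conclusion (emptiness of the set), yours avoids the auxiliary constant $C_0$ and stays closer in spirit to the $p<\infty$ case. Your final remark about fixing the indices with $q_i=\infty$ and running the construction only on the remaining ones matches the paper's treatment exactly.

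One detail worth double-checking in a write-up: when you claim $|\nabla A_i|\leq\lam^{r/q_i}$ a.e.\ on $(G_\lam)^c$, this follows from $\M(\nabla A_i)\leq\lam^{r/q_i}$ there only after noting that for a.e.\ point of differentiability of $A_i$ each directional derivative is dominated by $\M(\nabla A_i)$; since the proof has already reduced to $A_i\in C_c^\infty$ by a limiting argument this is immediate, but it deserves a sentence.
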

\begin{proof}
The proof is quite similar to that of Proposition \ref{p:12qibigd}. So we shall be brief and  only indicate necessary modifications here. Proceeding the proof as we do that in Proposition \ref{p:12qibigd}, there are four different arguments involved. We will point out below one by one.

The first one is that when we choose the set $E_\lam$, we choose
$$E_\lam=\{x\in\R^d:|\mathcal{C}[\nabla A_1,\cdots,\nabla A_n, f](x)|>C_0\lam\},$$
where $C_0$ is a constant which will be determined later. Our goal is to show $m(E_\lam)\lc\lam^{-r}$. We split $E_\lam$ as several terms and give estimates as follows:
\Bes
\begin{split}
m(\{x&\in\R^d: |\mathcal{C}[\nabla A_1,\cdots,\nabla A_n,f](x)|>C_0\lambda\})\\
&\leq m(10G_\lam)+m\big(\{x\in (10G_\lam)^c:|\mathcal{C}[\nabla A_1,\cdots,\nabla A_n,f_1](x)|>C_0\lambda/2\}\big)\\
&\qquad \ \ \ +m\big(\{x\in (10G_\lam)^c:|\mathcal{C}[\nabla A_1,\cdots,\nabla A_n,f_2](x)|>C_0\lambda/2\}\big).
\end{split}
\Ees
The first term above satisfies $|10G_\lam|\lc\lam^{-r}$, so it suffices to consider the second and third term. We only consider $x\in(10G_\lam)^c$.

The second difference is the estimate related to $f_1$. Here we need to choose $\tilde{r}$, $\tilde{q}_1$, $\cdots$, $\tilde{q}_n$, such that $1<\tilde{r}<\infty$, $q_1<\tilde{q}_1<\infty$, $\cdots$, $q_n<\tilde{q}_n<\infty$ and $\fr{1}{\tilde{r}}=\sum_{i=1}^n\fr{1}{\tilde{q}_i}$. Apply Lemma \ref{p:12strongr} with those above $\tilde{r}$, $\tilde{q}_1$, $\cdots$, $\tilde{q}_n$,
\Bes
\begin{split}
m&\big(\big\{x\in (10G_\lam)^c: |\mathcal{C}[\nabla A_1,\cdots,\nabla A_n,f_1](x)|>{C_0\lam}/{2}\big\}\big)\\
&\leq m\big(\big\{x\in (G_\lam)^c: |\mathcal{C}[\nabla ({A}_1\chi_{(G_\lam)^c}),\cdots,\nabla({A}_n\chi_{(G_\lam)^c}),f_1](x)|>{C_0\lam}/{2}\big\}\big)\\
&\lc \lam^{-\tilde{r}}\Big(\prod_{i=1}^n\|\nabla({A}_i\chi_{(G_\lam)^c})\|^{\tilde{r}}_{L^{\tilde{q}_i}(\R^d)}\Big)\|f_1\|^{\tilde{r}}_{L^\infty(\R^d)}\\
&\lc
\lam^{-\tilde{r}}\Big(\prod_{i=1}^n\|\nabla{A}_i\|^{(\tilde{q}_i-q_i)\fr{\tilde{r}}{\tilde{q}_i}}_{L^{\infty}({(G_\lam)^c})}\Big)
\Big(\prod_{i=1}^n \|\nabla A_i\|_{L^{q_i}(\R^d)}^{\fr{q_i}{\tilde{q}_i}\tilde{r}}\Big)\|f_1\|^{\tilde{r}}_{L^\infty(\R^d)}\\
&\lc\lam^{-\tilde{r}+\tilde{r}\big(\sum_{i=1}^n\fr{r}{q_i}\big)-{r}\big(\sum_{i=1}^n\fr{\tilde{r}}{\tilde{q}_i}\big)}=\lam^{-r},
\end{split}
\Ees
where in the last second inequality we use $A_i$ is a Lipschitz function on $(G_\lam)^c$ with Lipschitz bound $\lam^{\fr{r}{q_i}}$ for $i=1,\cdots,n$ and $L^{d,1}(\R^d)\subsetneq L^d(\R^d)$ if $q_{i}=d$.

Next consider the estimate related to $f_2$. As we have done in the proof of Proposition \ref{p:12qibigd}, we divide $\mathcal{C}[\nabla A_1,\cdots,\nabla A_n, f_2](x)$ into several terms and separate these terms into two part according $I$ and $II$ in \eqref{e:12axyqbigd}. Then we get
\Bes
\begin{split}
&m\big(\{x\in (10G_\lam)^c:|\mathcal{C}[\nabla A_1,\cdots,\nabla A_n,f_2](x)|>C_0\lambda/2\}\big)\\
&\qquad\leq m\big(\big\{x\in (10G_\lam)^c: |\mathcal{C}[\nabla \tilde{A}_1,\cdots,\nabla\tilde{A}_n,f_2](x)|>{C_0\lam}/{4}\big\}\big)\\
&\qquad\qquad+ m\big(\big\{x\in (10G_\lam)^c: |H_{II}(x)|>{C_0\lam}/{4}\big\}\big).
\end{split}
\Ees

The third difference is the {\it estimate of $\mathcal{C}[\cdots,\cdot]$ related to $I$\/}. Here we apply Lemma \ref{p:12inftyweak} and the estimate $\|f_2\|_{L^1(\R^d)}\lc\|f\|_{L^\infty(\R^d)}|G_\lam|\lc\lam^{-r}$ to get
\Bes
\begin{split}
m\big(\big\{x\in& (10G_\lam)^c: |\mathcal{C}[\nabla \tilde{A}_1,\cdots,\nabla\tilde{A}_n,f_2](x)|>{C_0\lam}/{4}\big\}\big)\\
&\lc \lam^{-1}\Big(\prod_{i=1}^n\|\nabla\tilde{A}_i\|_{L^\infty(\R^d)}\Big)\|f_2\|_{L^1(\R^d)}
\lc\lam^{-1+\big(\sum_{i=1}^n\fr{r}{q_i}\big)-r}=\lam^{-r}.
\end{split}
\Ees

The fourth difference is the {\it estimate of $\mathcal{C}[\cdots,\cdot]$ related to $II$\/}. We will show that
\Be\label{e:12Hiiemp}\big\{x\in (10G_\lam)^c: |H_{II}(x)|>{C_0\lam}/{4}\big\}=\emptyset.
\Ee
In fact, by \eqref{e:12qgeqdinfty} and Lemma \ref{l:12disq1infty} with $q=\infty$, we get for any $x\in(10G_\lam)^c$,
\Bes
\begin{split}
|H_{II}(x)|\leq C_{d}\lam^{\sum_{i=1}^n\fr{r}{q_i}}\|f\|_{L^\infty(\R^d)}=C_d\lam.
\end{split}
\Ees
So if we choose $C_0>4C_d$, we get \eqref{e:12Hiiemp}. Thus we finish the proof.
\end{proof}
\vskip0.24cm
\subsection{Case: $d/(d+n)\leq r\leq 1$ and $1\leq q_1,\cdots,q_n<d$}\label{s:1224}\quad
\vskip0.24cm
In this subsection, we consider the case $d/(d+n)\leq r\leq 1$ and $1\leq q_1,\cdots,q_n<d$. Again here the proof of $p=\infty$ is a little different from that of $1\leq p<\infty$. We first consider $1\leq p<\infty$ and point out in the proof where it doesn't work for $p=\infty$.
\begin{prop}\label{p:12qleqdall}
Let $ d/(d+n)\leq r\leq1$, $1\leq q_1,\cdots,q_n< d$, $1\leq p<\infty$. Then the weak type estimate \eqref{e:12muweak} holds.
\end{prop}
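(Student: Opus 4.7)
The plan is to follow the structure of the proof of Proposition \ref{p:12qibigd}, with one essential modification: in the sub-critical regime $q_i<d$ the Mary Weiss maximal operator $\M$ is unbounded on $L^{q_i}$, so it must be replaced by the order-$q_i$ Hardy--Littlewood maximal operator $M_{q_i}$, which is of weak type $(q_i,q_i)$. The price is that one loses pointwise Lipschitz control of $A_i$ outside the exceptional set; to compensate, the essential local control of oscillations of $A_i$ will come from Lemma \ref{l:12qleqd} together with the Sobolev--Poincar\'e inequality with the Sobolev conjugate exponent $s_i=dq_i/(d-q_i)$.

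After normalizing $\|\nabla A_i\|_{L^{q_i}}=\|f\|_{L^p}=1$ and fixing $\lam>0$, I would form the exceptional set $J_{i,\lam}=\{M_{q_i}(\nabla A_i)>\lam^{r/q_i}\}$, which satisfies $|J_{i,\lam}|\lc\lam^{-r}$ by the weak $(q_i,q_i)$ bound on $M_{q_i}$. An open set $G_\lam\supset\bigcup_i J_{i,\lam}$ with $|G_\lam|\lc\lam^{-r}$ is Whitney-decomposed into $\{Q_k\}$, centers $y_k\in(G_\lam)^c$ with $\dist(y_k,Q_k)\approx l(Q_k)$ are selected as in \eqref{e:12whitney}, and $f$ is split as $f=f_1+f_2$ with $f_1=f\chi_{(G_\lam)^c}$. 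Then $|E_\lam|$ is bounded by $|10G_\lam|\lc\lam^{-r}$ plus the weak-type measures of the contributions from $f_1$ and $f_2$ on $(10G_\lam)^c$.

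For the $f_2$ piece I would use the telescopic decomposition \eqref{e:12axyqbigd}, namely $\prod_i\frac{A_i(x)-A_i(y)}{|x-y|}=I(x,y)+II(x,y,y_k)$. The novel input is that each oscillation factor $|A_i(y)-A_i(y_k)|$ with $y\in Q_k$ is controlled only on $L^{s_i}$-average via Lemma \ref{l:12qleqd} and the Sobolev--Poincar\'e inequality:
$$\Big(\fr{1}{|Q_k^*|}\int_{Q_k^*}|A_i(z)-A_i(y_k)|^{s_i}dz\Big)^{1/s_i}\lc l(Q_k)\,M_{q_i}(\nabla A_i)(y_k)\leq l(Q_k)\lam^{r/q_i}.$$
Inserting this via H\"older's inequality into $\int_{Q_k}|K(x-y)||II(x,y,y_k)||f(y)|dy$, using the off-diagonal decay $l(Q_k)^{n-v}/(l(Q_k)+|x-y|)^{d+n-v}$ exactly as in \eqref{e:12qgeqdinfty}, and summing in $k$ through Lemma \ref{l:12disq1infty} should deliver the required $\lc\lam^{-r}$ bound.

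The main obstacle, which I expect to be the most delicate step, is the $f_1$ contribution: the Lipschitz extension $\tilde A_i$ of Proposition \ref{p:12qibigd} is no longer available when $q_i<d$. My plan is to imitate the interpolation trick of Proposition \ref{p:12qileqinfty}: pick auxiliary exponents $1<\tilde r<\infty$, $q_i<\tilde q_i<\infty$ and $1<\tilde p\le\infty$ with $1/\tilde r=\sum_i 1/\tilde q_i+1/\tilde p$, exploit the Lebesgue-differentiation bound $|\nabla A_i|\le\lam^{r/q_i}$ a.e.\ on $(G_\lam)^c$ combined with the elementary interpolation $\|\nabla A_i\chi_{(G_\lam)^c}\|_{L^{\tilde q_i}}\lc\lam^{r/q_i-r/\tilde q_i}$, and apply the strong-type estimate of Proposition \ref{p:12strongr} to the commutator with $A_i\chi_{(G_\lam)^c}$ in place of $A_i$. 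A careful bookkeeping of exponents, followed by Chebyshev's inequality, should then collapse to the desired $\lam^{-r}$ bound.
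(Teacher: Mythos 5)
Your plan captures one genuine ingredient of the paper's proof (the Sobolev--Poincar\'e estimate via Lemma \ref{l:12qleqd} applied to the oscillation factors $A_i(y)-A_i(y_k)$, which is exactly what the paper does for its term $III$), but it has a fundamental gap at the heart of the construction: with the exceptional set $J_{i,\lam}=\{M_{q_i}(\nabla A_i)>\lam^{r/q_i}\}$ one does not obtain a pointwise Lipschitz bound for $A_i$ on $(G_\lam)^c$. Knowing $|\nabla A_i|\leq\lam^{r/q_i}$ a.e.\ on $(G_\lam)^c$ (or even knowing $\mathfrak M_{s_i}(\nabla A_i)\leq\lam^{r/q_i}$ there) controls oscillations only on $L^{s_i}$-average, not pointwise; the pointwise Mary Weiss inequality $|A_i(x)-A_i(y)|\lc\lam^{r/q_i}|x-y|$ for $x,y\in(G_\lam)^c$ is genuinely false under these hypotheses alone when $q_i<d$. (If it were automatic, $\M$ would be controlled by $M_{q_i}$ and Lemma \ref{l:mw} would hold for $p\leq d$, which it does not.) Without this pointwise estimate there is no Lipschitz extension $\tilde A_i$ with small constant, and so the decomposition \eqref{e:12axyqbigd} that you invoke for $f_2$ is undefined: the factors $\tilde A_i(x)-\tilde A_i(y)$ in the $I$ and $N_1$-pieces, and $\tilde A_i(y)-\tilde A_i(y_k)$ in the $N_2$-pieces, have no meaning and no bound. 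The same failure undermines your $f_1$ step: ``the commutator with $A_i\chi_{(G_\lam)^c}$'' only makes sense after replacing $A_i$ by a globally defined Sobolev function agreeing with $A_i$ on $(G_\lam)^c$, which is precisely the Lipschitz extension you lack. The interpolation bound $\|\nabla A_i\chi_{(G_\lam)^c}\|_{L^{\tilde q_i}}\lc\lam^{r/q_i-r/\tilde q_i}$ is correct as a numerical inequality, but it does not produce a function to which Proposition \ref{p:12strongr} can be applied.

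The paper's actual proof spends its main effort manufacturing the pointwise Lipschitz estimate \eqref{e:12qileqdlip} on $(G_\lam)^c$. It does this by a Calder\'on--Zygmund decomposition of each $\pari_j A_i$ at height $\lam^r$ into good plus bad, writes $A_{i,j}=A^g_{i,j}+A^b_{i,j}$, and builds four separate exceptional sets: $B_\lam$ (union of the CZ cubes), $D_\lam$ (based on $\mathfrak M_{s_i}$, matching your $J_{i,\lam}$), $F_\lam$ (where a weighted sum of CZ-cube indicators is large), and $H_\lam$ (where $\M(\nabla A^g_{i,j})$ is large --- note $\M$ can now be used because the good part has bounded gradient). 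On the complement of all of them, $A^g_{i,j}$ is Lipschitz by the $H_\lam$ construction, and $A^b_{i,j}$ is shown Lipschitz by a direct estimate exploiting the mean-zero property of each bad piece together with $F_\lam$ and the $10B_\lam$-separation. Only after this Lipschitz estimate is in hand does the proof proceed as in Proposition \ref{p:12qibigd}, and the Sobolev--Poincar\'e/$\mathfrak M_{s_i}$ estimate you describe is used precisely (and only) for the $III$ factors $A_i(y_k)-A_i(y)$ with $y\in Q_k$. So your proposal is missing the CZ decomposition of the gradients, the sets $B_\lam,F_\lam,H_\lam$, and the whole of the paper's Step~6; without these, both the $f_1$ estimate and the $I$/$II$ pieces of the $f_2$ estimate are unjustified.
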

\begin{proof}
Our main goal is to prove that for any $\lam>0$, the following inequality holds
\Bes
\begin{split}
m(\{x\in\R^d:|\C[\nabla A_1,\cdots,&\nabla A_n, f](x)|>\lam\})\\
&\lc\lam^{-r}\Big(\prod_{i=1}^n\|\nabla A_i\|^r_{L^{q_i}(\R^d)}\Big)\|f\|^r_{L^p(\R^d)}.
\end{split}
\Ees

Now we fix $\lam>0$. Recall $E_\lam$ defined in \eqref{e:12elam}.
By rescaling as showed in the proof of Proposition \ref{p:12qibigd}, we only need to show $|E_\lam|\lc\lam^{-r}$. The main idea is to construct some {\it exceptional set\/} such that the measure of {\it exceptional set\/} is bounded by $\lam^{-r}$, which is our required estimate. At the same time on the complementary set of {\it exceptional set\/}, these functions $A_i$ should be Lipschitz functions with bound $\lam^{\fr{r}{q_i}}$ for each $i=1,\cdots,n$. Below we begin our constructions of some {\it exceptional set\/} which will be involved with several steps.

\subsubsection*{Step 1: Calder\'on-Zygmund decomposition.}
By the formula given in \cite[page 125, (17)]{Ste70}, for each $A_i$, $i=1,\cdots,n$, one may write
\Bes
A_i(x)=\sum_{j=1}^dC_d\int_{\R^d}\fr{x_j-y_j}{|x-y|^d}\pari_j A_i(y)dy=:\sum_{j=1}^dA_{i,j}(x).
\Ees
For each $|\pari_jA_i|^{q_i}\in L^1(\R^d)$ with $j=1,\cdots, d$ and $i=1,\cdots,n$, making a Calder\'on-Zygmund decomposition at level $\lam^r$,
one may have the following conclusions (see \eg \cite{Gra249}):
\begin{enumerate}[\quad (cz-i)]
\rm\item $\pari_j A_i=g_{j,i}+b_{j,i}$, $\|g_{j,i}\|_{L^\infty(\R^d)}\lc\lambda^{\fr{r}{q_i}}$, $\|g_{j,i}\|_{L^{q_i}(\R^d)}\lc\|\nabla A_i\|_{L^{q_i}(\R^d)}$;
\item $b_{j,i}=\sum_{Q\in \mathcal{Q}_{j,i}} b_{j,i,Q}$, $\supp\  b_{j,i,Q}\subset Q$, where $\mathcal{Q}_{j,i}$ is a countable set of disjoint dyadic cubes;
\item Let $E_{j,i}=\bigcup_{Q\in \mathcal{Q}_{j,i}} Q$, then $m(E_{j,i})\lc {\lambda^{-{r}}}\|\pari_jA_i\|^{q_i}_{L^{q_i}(\R^d)}$;
\item $\int b_{j,i,Q}(y)dy=0$ for each $Q\in \mathcal{Q}_{j,i}$ and $\|b_{j,i,Q}\|^{q_i}_{L^{q_i}(\R^d)}\lc{\lambda^{{r}}}|Q|$, so we get $\|b_{j,i}\|_{L^{q_i}{(\R^d)}}\lc\|\pari_jA_i\|_{L^{q_i}(\R^d)}$ by  (cz-ii) and (cz-iii).
\end{enumerate}

We are going to separate $A_{i,j}$ into two parts according the above Calder\'on-Zygmund decomposition property (cz-i):
\Bes
\begin{split}
A^g_{i,j}(x)&=C_d\int_{\R^d}\fr{x_j-y_j}{|x-y|^d}g_{j,i}(y)dy;\\
A^b_{i,j}(x)&=C_d\int_{\R^d}\fr{x_j-y_j}{|x-y|^d}b_{j,i}(y)dy.
\end{split}
\Ees
Set the \emph{exceptional set}$B_\lam=\cup_{i=1}^n\cup_{j=1}^dE_{j,i}$. Then by (cz-iii), we get
$m(B_\lam)\lc\lam^{-r}.$

\subsubsection*{Step 2: Exceptional set $D_\lam$.} Set $\fr{1}{s_i}=\fr{1}{q_i}-\fr{1}{d}$ for $i=1,\cdots,n$. Define the following \emph{exceptional set}
\Bes
D_{i,\lam}=\Big\{x\in\R^d:\mathfrak{M}_{s_i}(\nabla A_i)(x)>\lam^{\fr{r}{q_i}}\Big\}
\Ees
where the maximal operator $\mathfrak{M}_{s_i}$ is defined in Lemma \ref{l:12qleqd}. We denote $D_\lam=\cup_{i=1}^nD_{i,\lam}$. Then by Lemma \ref{l:12qleqd} and the weak type (1,1) bound for the Hardy-Littlewood maximal operator, we see
\Bes
m(D_{i,\lam})\leq m(\{x\in\R^d:M_{q_i}(\nabla A_i)(x)>C\lam^{\fr{r}{q_i}}\})\lc\lam^{-r}\|\nabla A_i\|^{q_i}_{L^{q_i}(\R^d)}=\lam^{-r}.
\Ees
So does $m(D_\lam)\lc\lam^{-r}$.

\subsubsection*{Step 3: Exceptional set $F_\lam$.} For each $j=1,\cdots,d$, $i=1,\cdots,n$, we define the functions
\Bes
\Delta^{j,i}(x)=\sum_{Q\in\mathcal{Q}_{j,i}}\fr{l(Q)}{[l(Q)+|x-y_Q|]^{d+1}}m(Q)
\Ees
where $y_Q$ is the center of $Q$. We define another \emph{exceptional set}
\Bes
F_{j,i,\lam}=\{x\in\R^d:\Delta^{j,i}(x)>1\},\ \ F_{\lam}=\cup_{j=1}^d\cup_{i=1}^n F_{j,i,\lam}.
\Ees
Then by the Chebyshev inequality and (cz-iii), we get
\Bes
|F_{j,i,\lam}|\leq\int_{\R^d}\Delta_{j,i,\lam}(x)dx\leq\Big[\int_{\R^d}\fr{1}{(1+|y|)^{d+1}}dy\Big]
\Big[\sum_{Q\in\mathcal{Q}_{j,i}}|Q|\Big]\lc\lam^{-r}.
\Ees
So does $m(F_\lam)\lc\lam^{-r}$.

\subsubsection*{Step 4: Exceptional set $H_\lam$.} We define the \emph{exceptional set}
\Bes
H_{i,j,\lam}=\{x\in\R^d:\M(\nabla A^g_{i,j})(x)>\lam^{r/q_i}\},\ \ H_\lam=\cup_{i=1}^n\cup_{j=1}^d H_{i,j,\lam}.
\Ees
Notice that by the definition of $A^g_{i,j}$, for each $z=1,\cdots,d$, we have $$\F(\pari_{z}A_{i,j}^g)(\xi)=C\fr{\xi_z\xi_j}{|\xi|^2}\F(g_{j,i})(\xi)\Rightarrow\nabla A^g_{i,j}=CRR_jg_{j,i},$$
where $\F$ is the Fourier transform, $R_j$ is the Riesz transform and $R=(R_1,\cdots,R_d)$. Since $R_j$ is of strong $(q,q)$ type for $1<q<\infty$, we get $\|\nabla A^g_{i,j}\|_{L^q(\R^d)}\lc\|g_{j,i}\|_{L^q(\R^d)}.$ By the Chebyshev inequality, Lemma \ref{l:mw} and (cz-i), we get for $d<q<\infty$,
\Bes
\begin{split}
m(H_{i,j,\lam})&\lc\lam^{-\fr{qr}{q_i}}\int_{\R^d}[\M(\nabla A_{i,j}^g)(x)]^qdx\lc\lam^{-\fr{qr}{q_i}}\int_{\R^d}|\nabla A_{i,j}^g(x)|^qdx\\
&\lc\lam^{-\fr{qr}{q_i}}\int_{\R^d}|g_{j,i}(x)|^qdx\lc\lam^{-r}\int_{\R^d}|g_{j,i}(x)|^{q_i}dx\lc\lam^{-r}.
\end{split}
\Ees
So does $m(H_\lam)\lc\lam^{-r}$.

\subsubsection*{Step 5: Final exceptional set $G_\lam$} Based on the construction of $B_\lam, D_\lam, F_\lam, H_\lam$ in Step 1-4,
we choose an open set $G_\lam$ which satisfies the following conditions:
\begin{enumerate}[(1).]
\item\quad $\big(10B_\lam\cup 10D_\lam\cup 10F_\lam\cup 10H_\lam\big)\subset G_\lam$;
\item\quad $m(G_\lam)\leq(20)^d(|B_\lam|+|D_\lam|+|F_\lam|+|H_\lam|)$.
\end{enumerate}
By the property of $B_{\lam}$, $D_\lam$, $F_\lam$ and $H_\lam$, we see that $m(G_\lam)\lc\lam^{-r}$. Next making a Whitney decomposition of $G_\lam$ (see \cite{Gra249}), we may get a family of disjoint dyadic cubes $\{Q_k\}_k$ such that
\begin{enumerate}[(i).]
\item \quad $G_\lam=\bigcup_{k=1}^\infty Q_k$;
\item \quad $\sqrt{d}\cdot l(Q_k)\leq dist(Q_k,(G_\lam)^c)\leq4\sqrt{d}\cdot l(Q_k).$
\end{enumerate}
With those properties (i) and (ii), for each $Q_k$, we could construct a larger cube $Q_k^*$ so that $Q_k\subset Q_k^*$, $Q_k^*$ is centered at $y_k$ and $y_k\in (G_\lam)^c$, $|Q_k^*|\approx|Q_k|$. By the property (ii) above, the distance between $Q_k$ and $(G_\lam)^c$ equals to $Cl(Q_k)$.
Therefore by the construction of $Q_k^*$ and $y_k$, we get
\Be\label{e:12whitney2}
dist(y_k,Q_k)\approx l(Q_k).
\Ee
\vskip 0.24cm
Clearly, the \emph{exceptional set} $G_\lam$ constructed in Step 5 satisfies that the measure is bounded by $\lam^{-r}$. In the following we will show that these functions $A_i$ are Lipschitz functions on the complementary set of $G_\lam$.

\subsubsection*{Step 6: Lipschitz estimates of $A_i$ on $(G_\lam)^c$} By the Calder\'on-Zygmund decomposition in Step 1, it suffices to show that $A_{i,j}^g$ and $A_{i,j}^b$ satisfy Lipschitz estimates on $\big(G_\lam\big)^c$ for each $i=1,\cdots,n$ and $j=1,\cdots,d$. Firstly, it is easy to see that $A_{i,j}^g$ satisfies Lipschitz estimates by the construction of $H_\lam$ in Step 4. In fact, for any $x,y\in H^c_\lam$, we get
\Be\label{e:12lipg}
|A_{i,j}^g(x)-A_{i,j}^g(y)|\leq\lam^{\fr{r}{q_i}}|x-y|.
\Ee

We give effort to showing $A_{i,j}^b$ is a Lipschitz function on $(G_\lam)^c$. Recall the Calder\'on-Zygmund decomposition property (cz-ii), (cz-iii) and (cz-iv) in Step 1. For each $b_{j,i}=\sum_{Q\in \mathcal{Q}_{j,i}} b_{j,i,Q}$, $\supp\ b_{j,i,Q}\subset Q$, where $\mathcal{Q}_{j,i}$ is a countable set of disjoint dyadic cubes. Then for each $Q\in\mathcal{Q}_{j,i}$, we define
$$A_{i,j}^{b_Q}(x)=C_d\int_{\R^d}\fr{x_j-z_j}{|x-z|^d}b_{j,i,Q}(z)dz.$$
Now we choose $x,y\in(G_\lam)^c$ and fix a dyadic cube $Q\in\mathcal{Q}_{j,i}$. Then by the construction of $G_\lam$, $x,y\in(10B_\lam)^c$, i.e. $x,y\in (10Q)^c$, therefore we get
$dist(x,Q)\geq \fr{9}{2}l(Q)$ and $dist(y,Q)\geq \fr{9}{2}l(Q)$.
We will give a straight-forward Lipschitz estimate of $A_{i,j}^{b_Q}$. Let $z_Q$ be the center of $Q$. Without loss of generality, suppose that $|x-z_Q|\leq|y-z_Q|$. Choose a point $Z\in\R^d$ such that
\Bes
\begin{split}
|x-Z|<100|x-y|;\ \ |y-Z|\leq100|x-y|;\ \ |X-z_Q|>\fr{2}{5}|x-z_Q|
\end{split}
\Ees
for any $X$ belongs to the polygonal with vertex $x,y,Z$.
One may draw a figure to check that such a point $Z$ always exists, provided that $dist(x,Q)>\fr{9}{2}l(Q)$ and $dist(y,Q)>\fr{9}{2}l(Q)$.
Now we split $A_{i,j}^{b_Q}(x)-A_{i,j}^{b_Q}(y)=A_{i,j}^{b_Q}(x)-A_{i,j}^{b_Q}(Z)+A_{i,j}^{b_Q}(Z)-A_{i,j}^{b_Q}(y).$
By using the mean value formula, we see that
\Be\label{e:12Aijc}
\begin{split}
&A_{i,j}^{b_Q}(x)-A_{i,j}^{b_Q}(Z)=\int_0^1\inn{x-Z}{\nabla (A_{i,j}^{b_Q})(tx+(1-t)Z)}dt;\\
&A_{i,j}^{b_Q}(Z)-A_{i,j}^{b_Q}(y)=\int_0^1\inn{Z-y}{\nabla (A_{i,j}^{b_Q})(tZ+(1-t)y)}dt.
\end{split}
\Ee
For any $0\leq t\leq1$, $tx+(1-t)Z$ and $tZ+(1-t)y$ lie in the polygonal with vertex $x,y,Z$. Notice that $|x-z_Q|\geq5l(Q)$. By our choice of $Z$, we have
\Be\label{e:12xyZt}
\begin{split}
&|tx+(1-t)Z-z_Q|>\fr{2}{5}|x-z_Q|>2l(Q),\\
&|tZ+(1-t)y-z_Q|>\fr{2}{5}|x-z_Q|>2l(Q).
\end{split}
\Ee
We set $Z(t)$ equals to $tx+(1-t)Z$ or $tZ+(1-t)y$ and $K_j(x)=x_j/|x|^d$. Using the cancelation condition of $b_{j,i,Q}$, \eqref{e:12xyZt} and (cz-iv) in Step 1, we see that
\Bes
\begin{split}
|\nabla &(A_{i,j}^{b_Q})(Z(t))|=\Big|\int_{\R^d}\Big[(\nabla K_j)(Z(t)-z)-(\nabla K_j)(Z(t)-z_Q)\Big]b_{j,i,Q}(z)dz\Big|\\
&\lc\fr{l(Q)}{[l(Q)+|x-z_Q|]^{d+1}}\|b_{j,i,Q}\|_{L^1(\R^d)}\lc\lam^{\fr{r}{q_i}}\fr{l(Q)}{[l(Q)+|x-z_Q|]^{d+1}}|Q|.
\end{split}
\Ees
Combining the above arguments with \eqref{e:12Aijc} and the construction of $Z$, we get
\Bes
\big|A_{i,j}^{b_Q}(x)-A_{i,j}^{b_Q}(y)\big|\lc\lam^{\fr{r}{q_i}}\fr{l(Q)}{[l(Q)+|x-z_Q|]^{d+1}}|Q||x-y|.
\Ees
Notice $x\in (G_\lam)^c$ implies that $x\in (F_\lam)^c$ in Step 3. Therefore we see that
\Be\label{e:12lipb}
\big|A_{i,j}^{b}(x)-A_{i,j}^{b}(y)\big|\lc\lam^{\fr{r}{q_i}}|x-y|\sum_{Q\in\mathcal{Q}_{j,i}}\fr{l(Q)}{[l(Q)+|x-z_Q|]^{d+1}}|Q|\leq\lam^{\fr{r}{q_i}}|x-y|.
\Ee

Now we conclude the Lipschitz estimates related good functions \eqref{e:12lipg} and bad functions \eqref{e:12lipb} to get that for any $i=1,\cdots,n$, $x,y\in (G_\lam)^c$,
\Be\label{e:12qileqdlip}
|A_i(x)-A_i(y)|\lc\lam^{\fr{r}{q_i}}|x-y|.
\Ee

\vskip 0.24cm

\subsubsection*{Step 7: Estimate of $E_\lam$} We return to give an estimate of $E_\lam$. Split $f$ into two parts $f=f_1+f_2$ where $f_1(x)=f(x)\chi_{(G_\lam)^c}(x)$ and $f_2(x)=f(x)\chi_{G_\lam}(x)$. By the Lipschitz estimate in \eqref{e:12qileqdlip}, when restricted on $(G_\lam)^c$, $A_i$ is a Lipschitz function with $\|\nabla A_i\|_{L^\infty((G_\lam)^c)}\leq\lam^{\fr{r}{q_i}}$ for $i=1,\cdots,n$. Let $\tilde{A}_i$ stand for the Lipschitz extension of $A_i$ from $(G_\lam)^c$ to $\R^d$ (see \cite[page 174, Theorem 3]{Ste70}) so that
for each $i=1,\cdots,n$,
\Be\label{e:12qileqde}
\begin{split}
\tilde{A}_i(y)&=A_i(y)\ \ \text{if} \ y\in (G_\lam)^c;\\
\big|\tilde{A}_i(x)-\tilde{A}_i(y)\big|&\leq\lam^{\fr{r}{q_i}}|x-y|\ \ \text{for all}\ x,y\in\R^d.
\end{split}
\Ee

As we have done in \eqref{e:12spmu} and \eqref{e:12qbigd} in the proof of Proposition \ref{p:12qibigd}, we may reduce the proof to the following estimate
\Bes
m\big(\{x\in (10G_\lam)^c:|\mathcal{C}[\nabla A_1,\cdots,\nabla A_n,f_2](x)|>\lambda/2\}\big)\lc\lam^{-r}.
\Ees

\subsubsection*{Step 8: Estimate of $\mathcal{C}[\nabla A_1,\cdots,\nabla A_n, f_2](x)$} Recall $\N_i^j=\{i,i+1,\cdots,j\}$ and our construction of $G_\lam$, $y_k$, $Q_k$ and $Q_k^*$ in the paragraph above \eqref{e:12whitney2}. Then we can write $f_2=\sum_k f\chi_{Q_k}$. Therefore we may get
$$\mathcal{C}[\nabla A_1,\cdots,\nabla A_n, f_2](x)=\sum_k\mathcal{C}[\nabla A_1,\cdots,\nabla A_n, f\chi_{Q_k}](x).$$
Below we study $\prod_{i=1}^n\fr{{A}_i(x)-A_i(y)}{|x-y|}$. We will separate it into several terms and then give an estimate for each term.  Write
\Bes
\begin{split}
&\ \ \ \ \prod_{i=1}^n\fr{{A}_i(x)-A_i(y)}{|x-y|}\\&=\prod_{i=1}^n\Big(\fr{\tilde{A}_i(x)-\tilde{A}_i(y)}{|x-y|}+\fr{\tilde{A}_i(y)-\tilde{A}_i(y_k)}{|x-y|}+\fr{A_i(y_k)-A_i(y)}{|x-y|}\Big)\\
&=\sum\Big(\prod_{i\in N_1}\fr{\tilde{A}_i(x)-\tilde{A}_i(y)}{|x-y|}\Big)\Big(\prod_{i\in N_2}\fr{\tilde{A}_i(y)-\tilde{A}_i(y_k)}{|x-y|}\Big)\Big(\prod_{i\in N_3}\fr{{A}_i(y_k)-{A}_i(y)}{|x-y|}\Big)\\
&=I(x,y)+II(x,y,y_k)+III(x,y,y_k),
\end{split}
\Ees
where in the third equality we divide $\N_{1}^n=N_1\cup N_2\cup N_3$ with $N_1$, $N_2$, $N_3$ non intersecting each other; and $I(x,y)$, $II(x,y,y_k)$, $III(x,y,y_k)$ are defined as follows
\Be\label{e:12axyqbigd2}
\begin{split}
I=&\prod_{i=1}^n\fr{\tilde{A}_i(x)-\tilde{A}_i(y)}{|x-y|},\\
II=&\sum_{N_1\subsetneq\N_{1}^n\atop N_3=\emptyset}\Big(\prod_{i\in N_1}\fr{\tilde{A}_i(x)-\tilde{A}_i(y)}{|x-y|}\Big)\Big(\prod_{i\in N_2}\fr{\tilde{A}_i(y)-\tilde{A}_i(y_k)}{|x-y|}\Big),\\
III=&\sum_{N_1\subsetneq\N_{1}^n\atop N_3\neq\emptyset}\Big(\prod_{i\in N_1}\fr{\tilde{A}_i(x)-\tilde{A}_i(y)}{|x-y|}\Big)
\Big(\prod_{i\in N_2}\fr{\tilde{A}_i(y)-\tilde{A}_i(y_k)}{|x-y|}\Big)\\
&\qquad\qquad\qquad\times\Big(\prod_{i\in N_3}\fr{{A}_i(y_k)-{A}_i(y)}{|x-y|}\Big).
\end{split}
\Ee
By the above decomposition, we in fact divide $\mathcal{C}[\nabla A_1,\cdots,\nabla A_n, f\chi_{Q_k}](x)$ into $3^n$ terms. We separate these terms into three parts according $I$, $II$ and $III$.

\subsubsection*{Step 9: Estimate of $\mathcal{C}[\cdots,\cdot]$ related to $I$.}  This estimate is similar to the term related to $I$ in the proof of Proposition \ref{p:12qibigd}. In fact, in this case there is only one term $\C[\nabla\tilde{A}_1,\cdots,\nabla\tilde{A}_n,f_2]$. Then by Proposition \ref{p:12strongr} ($1<p<\infty$) and Proposition \ref{p:12inftyweak} ($p=1$), we get
\Bes
\begin{split}
m\big(\big\{x\in& (10G_\lam)^c: |\mathcal{C}[\nabla \tilde{A}_1,\cdots,\nabla\tilde{A}_n,f_2](x)|>{\lam}/{2}\big\}\big)\\
&\lc \lam^{-p}\Big(\prod_{i=1}^n\|\nabla\tilde{A}_i\|^p_{L^\infty(\R^d)}\Big)\|f_2\|^p_{L^p(\R^d)}
\lc\lam^{-p+p\sum_{i=1}^n\fr{r}{q_i}}=\lam^{-r}.
\end{split}
\Ees
If $p=\infty$, the above argument may not work.

\subsubsection*{Step 10: Estimate of $\mathcal{C}[\cdots,\cdot]$ related to $II$.} The proof of this part is similar to the estimate related to $II$ in Proposition \ref{p:12qibigd}.
It suffices to consider one term $\mathcal{C}[\cdots,\cdot]$ related to $II$ in which $N_1$ is a proper subset of $\N_1^n$.
In this case, without loss of generality, we may assume $N_1=\{1,\cdots,l\}$, $N_2=\{l+1,\cdots,n\}$ with $0\leq l<n$. Here when $l=0$, it means that $N_1=\emptyset$. With these notation, it is easy to see that $N_1$ is a proper subset of $\N_1^n$.
By a slight abuse of notation, we still use $II(x,y,y_k)$ to represent one term related to $N_1$ and $N_2$ in \eqref{e:12axyqbigd2} and use $H_{II}(x)$ to represent $\mathcal{C}[\cdots,\cdot]$ related to ${II}(x,y,y_k)$, i.e.
$$H_{II}(x)=\sum_k\int_{Q_k}K(x-y)II(x,y,y_k)f(y)dy.$$
Notice that $\tilde{A}_i$ is a Lipschitz function with bound $\lam^{\fr{r}{q_i}}$ by \eqref{e:12qileqde} for all $i=1,\cdots,n$. Then we get
\Bes
\begin{split}
|II(x,y,y_k)|\lc\lam^{\sum_{i=1}^n\fr{r}{q_i}}\fr{|y-y_k|^{n-l}}{|x-y|^{n-l}}.
\end{split}
\Ees
Since we only consider $x\in(10G_\lam)^c$, then by \eqref{e:12whitney2}, we get
\Be\label{e:12xygeqQ}
|x-y|\geq 2l(Q_k)\approx|y-y_k|\ \ \text{for any $y\in Q_k$}.
\Ee
Therefore utilizing \eqref{e:12kb} and the above estimate, we get
\Bes
\begin{split}
|H_{II}(x)|&\leq\sum_k\int_{Q_k}|K(x-y)|\cdot|II(x,y,y_k)|\cdot|f(y)|dy\\
&\lc\lam^{\sum_{i=1}^n\fr{r}{q_i}}\sum_k\int_{Q_k}\fr{l(Q_k)^{n-l}}{[l(Q_k)+|x-y|]^{d+n-l}}|f(y)|dy\\
&=\lam^{\sum_{i=1}^n\fr{r}{q_i}}T_{n-l}f(x),
\end{split}
\Ees
where the operator $T_{n-l}$ is defined in Lemma \ref{l:12disq1infty} with $|\Om|\equiv1$.
Now applying the Chebyshev inequality and the above estimate, and using Lemma \ref{l:12disq1infty} since $n-l\geq1$, we finally get
\Bes
\begin{split}
m(\{x\in(G_\lam)^c: |H_{II}(x)|>\lam\})&\leq\lam^{-p+\sum_{i=1}^n\fr{rp}{q_i}}\int_{(10G_\lam)^c}|T_{n-l}f(x)|^pdx\\
&\lc\lam^{-r}\|f\|^p_{L^p(\R^d)}.
\end{split}
\Ees
Hence we complete the proof related to $II$.

\subsubsection*{Step 11: Estimate of $\mathcal{C}[\cdots,\cdot]$ related to $III$.}
It suffices to consider one term $\mathcal{C}[\cdots,\cdot]$ related to $III$ in which $N_1$ is a proper subset of $\N_1^n$ and $N_3$ is a nonempty set.
In this case, without loss of generality, we may assume $N_1=\{1,\cdots,l\}$, $N_2=\{l+1,\cdots,m\}$ and $N_3=\{m+1,\cdots, n\}$ with $0\leq l\leq m<n$. Here when $l=0$, it means that $N_1=\emptyset$; when $l=m$, $N_2=\emptyset$. With these notation, one can easily see that $N_1$ is a proper subset of $\N_1^n$ and $N_3$ is a nonempty set.
By a slight abuse of notation, we still use $III(x,y,y_k)$ to represent one term related to $N_1$, $N_2$ and $N_3$ in \eqref{e:12axyqbigd2} and use $H_{III}(x)$ to represent $\mathcal{C}[\cdots,\cdot]$ related to ${III}(x,y,y_k)$, i.e.
$$H_{III}(x)=\sum_k\int_{Q_k}K(x-y)III(x,y,y_k)f(y)dy.$$

Recall in Step 2, we set $\fr{1}{s_i}=\fr{1}{q_i}-\fr{1}{d}$ for all $i=1,\cdots,n$. We also set $\fr{1}{q}=\Big(\sum_{i=m+1}^n\fr{1}{s_i}\Big)+\fr{1}{p}.$
Since $r\geq d/(d+n)$ and $\fr{1}{r}=\Big(\sum_{i=1}^n\fr{1}{q_i}\Big)+\fr{1}{p}$, by some elementary calculation, one may get $1\leq q\leq\infty$, this will be crucial when we use Lemma \ref{l:12disq1infty}. This is the place where we use the condition $r\geq d/(d+n)$. With the above fact and $\tilde{A}_i$ is a Lipschitz function with bound $\lam^{r/q_i}$ for $i=1,\cdots,m$, we get
\Bes
\begin{split}
|III(x,y,y_k)|\lc\lam^{\sum_{i=1}^m\fr{r}{q_i}}\fr{(l(Q_k))^{n-l}}{|x-y|^{n-l}}\prod_{i=m+1}^n\fr{|A_i(y_k)-A_i(y)|}{l(Q_k)}.
\end{split}
\Ees
Applying \eqref{e:12kb} and the above estimate with \eqref{e:12xygeqQ}, we get
\Bes
\begin{split}
|H_{III}(x)|&\leq\sum_k\int_{Q_k}|K(x-y)|\cdot|III(x,y,y_k)|\cdot|f(y)|dy\\
&\lc\lam^{\sum_{i=1}^m\fr{r}{q_i}}\sum_k\int_{Q_k}\fr{l(Q_k)^{n-l}}{[l(Q_k)+|x-y|]^{d+n-l}}h_{m,n}(y)dy\\
&\lc\lam^{\sum_{i=1}^m\fr{r}{q_i}}T_{n-l}\big(h_{m,n}\big)(x)
\end{split}
\Ees
where the function $h_{m,n}(y)=:\sum_{Q_k}\prod_{i=m+1}^n\Big(\fr{|A_i(y_k)-A_i(y)|}{l(Q_k)}\Big)\chi_{Q_k}|f|(y).$

Applying the Chebyshev inequality and the above estimate of $H_{III}$, utilizing Lemma \ref{l:12disq1infty} with $|\Om|\equiv1$, we then get
\Be\label{e:12Glam3qleqd}
\begin{split}
&m(\{x\in(10G_\lam)^c: |H_{III}(x)|>\lam\})\\
&\leq\lam^{-q+\big(\sum_{i=1}^m\fr{rq}{q_i}\big)}\int_{(10G_\lam)^c}|T_{n-l}(h_{m,n})(x)|^qdx\\
&\lc\lam^{-q+\big(\sum_{i=1}^m\fr{rq}{q_i}\big)}\|h_{m,n}\|^q_{L^q(\R^d)},
\end{split}
\Ee
since $1\leq q\leq\infty$ and $n-l\geq1$. Below we give an estimate of $\|h_{m,n}\|^q_{L^q(\R^d)}$. We write
\Be\label{e:12hmn}
\begin{split}
\|h_{m,n}\|^q_{L^q(\R^d)}&=\sum_{Q_k}\int_{Q_k}\prod_{i=m+1}^n\Big[\fr{|A_i(y_k)-A_i(y)|}{l(Q_k)}\Big]^q|f(y)|^qdy\\
&\leq\sum_{Q_k}\prod_{i=m+1}^n\Big[\int_{Q_k}\Big(\fr{|A_i(y_k)-A_i(y)|}{l(Q_k)}\Big)^{s_i}dy\Big]^{\fr{q}{s_i}}\Big(\int_{Q_k}|f(y)|^pdy\Big)^{\fr{q}{p}}\\
&\lc\sum_{Q_k}\prod_{i=m+1}^n\Big[\int_{Q^*_k}\Big(\fr{|A_i(y_k)-A_i(y)|}{l(Q^*_k)}\Big)^{s_i}dy\Big]^{\fr{q}{s_i}}\Big(\int_{Q_k}|f(y)|^pdy\Big)^{\fr{q}{p}}\\
&\lc\sum_{Q_k}\Big[\prod_{i=m+1}^n\mathfrak{M}_{s_i}(\nabla A_i)(y_k)^{q}|Q_k|^{\fr{q}{s_i}}\Big]\Big(\int_{Q_k}|f(y)|^pdy\Big)^{\fr{q}{p}},
\end{split}
\Ee
where the second inequality just follows from the H\"older inequality and in the third inequality we use the fact $Q_k\subset Q_k^*$, $y_k$ is the center of $Q^*_k$ and $l(Q^*_k)\approx l(Q_k)$. Notice that $y_k$ lies in the $(G_\lam)^c$, i.e. $y_k\in (D_{i,\lam})^c$. Therefore by the construction of $D_{i,\lam}$ in Step 2, we get
\Bes
\mathfrak{M}_{s_i}(\nabla A_i)(y_k)\leq\lam^{\fr{r}{q_i}}, \ \text{for $i=m+1,\cdots,n$.}
\Ees
Applying the above inequality, the H\"older inequality again and (cz-iii) in Step 1, we get
\Bes
\begin{split}
\|h_{m,n}\|^q_{L^q(\R^d)}&\lc\lam^{\sum_{i=m+1}^n\fr{qr}{q_i}}\Big(\sum_{Q_k}|Q_k|\Big)^{\sum_{i=m+1}^n\fr{q}{s_i}}\|f\|^q_{L^p(\R^d)}\\
&\lc\lam^{\sum_{i=m+1}^n\fr{qr}{q_i}}|G_\lam|^{\sum_{i=m+1}^n\fr{q}{s_i}}\lc\lam^{\sum_{i=m+1}^n\Big(\fr{qr}{q_i}-\fr{qr}{s_i}\Big)}.
\end{split}
\Ees
Submitting the above estimate into \eqref{e:12Glam3qleqd} with some elementary calculations, we finally get
\Bes
\begin{split}
m(\{x\in(10G_\lam)^c: |H_{III}(x)|>\lam\})\lc\lam^{-q+\big(\sum_{i=1}^n\fr{rq}{q_i}\big)-\big(\sum_{i=m+1}^n\fr{qr}{s_i}\big)}\lc\lam^{-r},
\end{split}
\Ees
which is the required bound. Hence we complete the proof. Notice that this argument for the term $III$ also works in the case $p=\infty$.
\end{proof}
\begin{prop}\label{e:12qleqgeqinfty}
Let $ d/(d+n)\leq r\leq1$, $1\leq q_1,\cdots,q_n< d$, $p=\infty$. Then
the weak type estimate \eqref{e:12muweak} holds.
\end{prop}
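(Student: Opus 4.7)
The plan is to repeat the construction and splitting of Proposition \ref{p:12qleqdall} essentially verbatim, and to graft onto it the modifications used in Proposition \ref{p:12qleqinfty} to handle $p=\infty$. First I would fix $\lambda>0$, rescale so that $\|\nabla A_i\|_{L^{q_i}}=\|f\|_{L^\infty}=1$, and build the exceptional sets $B_\lambda, D_\lambda, F_\lambda, H_\lambda$ exactly as in Steps 1--4 of Proposition \ref{p:12qleqdall} (Calder\'on--Zygmund decomposition of each $\pari_j A_i$, then the maximal sets associated with $\mathfrak{M}_{s_i}(\nabla A_i)$, with the kernel sums $\Delta^{j,i}$, and with $\M(\nabla A^g_{i,j})$). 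The union $G_\lambda$ then has measure $\lesssim\lambda^{-r}$, and Steps 5--6 give the Lipschitz extensions $\tilde A_i$ of $A_i|_{(G_\lambda)^c}$ with bound $\lambda^{r/q_i}$, so that $\C[\nabla A_1,\dots,\nabla A_n,f_1]=\C[\nabla\tilde A_1,\dots,\nabla\tilde A_n,f_1]$ on $(G_\lambda)^c$ where $f_1=f\chi_{(G_\lambda)^c}$.

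For the estimate of $E_\lambda$ I would, as in Proposition \ref{p:12qleqinfty}, replace $\lambda$ by $C_0\lambda$ for a large constant $C_0$ to be fixed at the end, and control the contribution of $f_1$ by the strong-type bound from Proposition \ref{p:12strongr} applied at auxiliary indices $(\tilde r,\tilde q_1,\dots,\tilde q_n)$ with $1<\tilde r<\infty$, $q_i<\tilde q_i<\infty$ and $1/\tilde r=\sum_i 1/\tilde q_i$. Using the Lipschitz bound $\lambda^{r/q_i}$ on $(G_\lambda)^c$ and an interpolation factor $\|\nabla A_i\|_{L^{q_i}}^{q_i/\tilde q_i}$, the exponents balance to produce the required $\lambda^{-r}$ bound.

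For $f_2=f\chi_{G_\lambda}$ I would use the decomposition \eqref{e:12axyqbigd2} into terms $I,II,III$ and treat each separately. The term $I=\C[\nabla\tilde A_1,\dots,\nabla\tilde A_n,f_2]$ is handled by the weak $(1,1)$ bound of Proposition \ref{p:12inftyweak} together with $\|f_2\|_{L^1}\leq\|f\|_{L^\infty}|G_\lambda|\lesssim\lambda^{-r}$, which yields $\lesssim\lambda^{-1+\sum r/q_i-r}=\lambda^{-r}$. For the term $II$ (no $N_3$ factor), the pointwise estimate from Step 10 combined with Lemma \ref{l:12disq1infty} at $q=\infty$ and $|\Omega|\equiv1$ gives $|H_{II}(x)|\lesssim\lambda^{\sum r/q_i}\|f\|_{L^\infty}=C_d\lambda$, so choosing $C_0$ large forces $\{x\in(10G_\lambda)^c:|H_{II}|>C_0\lambda/4\}=\emptyset$, exactly as in \eqref{e:12Hiiemp}. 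The term $III$ is the main point: here I would simply reuse Step 11 of Proposition \ref{p:12qleqdall}, whose final bound
\[
\|h_{m,n}\|_{L^q}^q \lesssim \lambda^{\sum_{i=m+1}^n(qr/q_i-qr/s_i)}
\]
together with Lemma \ref{l:12disq1infty} gives $\lesssim\lambda^{-r}$, and which the author already observed is valid for $p=\infty$ (the exponent count is the same and only uses $1\leq q\leq\infty$, guaranteed by $r\geq d/(d+n)$).

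The main obstacle, and the only genuinely new point relative to the two previous propositions, is verifying that the exponent $q$ introduced in Step 11 still lies in $[1,\infty]$ when $p=\infty$ so that Lemma \ref{l:12disq1infty} applies, and that the two distinct adaptations (the $\tilde r,\tilde q_i$ trick for $f_1$ and the emptiness trick for $II$) do not conflict with the additive bookkeeping in the $III$ estimate. Since both are local to a single piece of the splitting, the combination goes through, and summing the four bounds completes the proof.
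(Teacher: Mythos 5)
Your proposal is correct and follows essentially the same approach as the paper, which deliberately omits the details here: the paper's proof consists exactly of the instruction to run the exceptional-set and Calder\'on--Zygmund construction of Proposition~\ref{p:12qleqdall} (Steps 1--9), import the four $p=\infty$ modifications from Proposition~\ref{p:12qleqinfty} (the $C_0\lambda$ level, the auxiliary indices $\tilde r,\tilde q_i$ for $f_1$, the weak $(1,1)$ treatment of $I$ via $\|f_2\|_{L^1}\lesssim\lambda^{-r}$, and the emptiness argument for $II$), and reuse Step~11 for the term $III$, where the author already noted the argument persists when $p=\infty$. Your verification that $q\in[1,\infty)$ (indeed $q<\infty$ since each $s_i<\infty$) when $p=\infty$ is the only point that requires a moment's thought, and your exponent bookkeeping in the $III$ estimate (using $\sum_i q/s_i=1$) matches the paper's computation; the worry about ``conflict'' among the pieces is unfounded since they enter additively.
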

\begin{proof}
The proof is quite similar to that of Proposition \ref{p:12qleqdall}. One may follow the four different arguments that we deal with $E_\lam$, $f_1$, $I$ and $II$ in the proof of Proposition \ref{p:12qleqinfty}.
The proof of the term $III$ is similar to Step 11 in the proof of Proposition \ref{p:12qleqdall}. We omit the details of the proof here.
\end{proof}
\vskip0.24cm

\subsection{Case: $d/(d+n)\leq r\leq 1$, some $q_i\geq d$ and some $q_i< d$}\label{s:1225}\quad
\vskip0.24cm
In this subsection, we consider the most complicated case: $d/(d+n)\leq r\leq 1$ with some $q_i\geq d$ and some $q_i< d$. After the warm-up of the case all $q_i\geq d$ in Subsection \ref{s:1223} and all $q_i<d$ in Subsection \ref{s:1224}, the strategy  here is quite clear that we will put the two arguments in Subsections \ref{s:1223} and \ref{s:1224} together.
Without loss of generality, we may suppose that $d\leq q_1,\cdots,q_l\leq\infty$ and $1\leq q_{l+1},\cdots,q_n< d$ with $1\leq l<n$. Also we may assume that
$q_{1}=\cdots=q_{k}=d$ and $d<q_{k+1},\cdots,q_l\leq\infty$  with $0\leq k\leq l$. When $k=0$, we mean that there is no indice in $q_1,\cdots,q_l$ equals to $d$, i.e. $d<q_1,\cdots,q_l\leq\infty$; when $k=l$, we mean that $q_1=\cdots=q_l=d$. Since the proof of $p=\infty$ is a little different from that of $1\leq p<\infty$, we will give two propositions here. We first consider $1\leq p<\infty$.

\begin{prop}\label{p:12qgeqleqd}
Let $d/(d+n)\leq r\leq 1$, $q_{1}=\cdots=q_{k}=d$, $d< q_{k+1},\cdots,q_l\leq\infty$ and $1\leq q_{l+1},\cdots,q_n< d$ with $0\leq k\leq l$ and $1\leq l<n$, $1\leq p<\infty$. Then
\Be\label{e:12weakqbg}
\begin{split}
\|\C[\nabla A_1,&\cdots,\nabla A_n, f]\|_{L^{r,\infty}(\R^d)}\\
&\lc\Big(\prod_{i=1}^k\|\nabla A_i\|_{L^{d,1}(\R^d)}\Big)\Big(\prod_{i=k+1}^n\|\nabla A_i\|_{L^{q_i}(\R^d)}\Big)\|f\|_{L^p(\R^d)}.
\end{split}
\Ee
\end{prop}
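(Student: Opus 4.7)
The plan is to hybridize the techniques of Propositions~\ref{p:12qibigd} and \ref{p:12qleqdall} rather than develop new machinery. Roughly speaking, for the ``good'' indices $i\in\{1,\dots,l\}$ (where $q_i\geq d$) I will control $A_i$ via the Mary Weiss maximal operator $\M$, and for the ``bad'' indices $i\in\{l+1,\dots,n\}$ (where $q_i<d$) I will use the Calder\'on--Zygmund decomposition of $|\partial_j A_i|^{q_i}$ at level $\lam^r$. The two sets of exceptional sets are unioned into one open set $G_\lam$ of measure $\lesssim\lam^{-r}$, on whose complement every $A_i$ is simultaneously Lipschitz with bound $\lam^{r/q_i}$.

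After a standard rescaling reducing to $\|\nabla A_i\|_{L^{d,1}}=\|\nabla A_i\|_{L^{q_i}}=\|f\|_{L^p}=1$, I would construct the exceptional sets as follows. For $i=1,\dots,k$ put $J_{i,\lam}=\{\M(\nabla A_i)>\lam^{r/q_i}\}$ and use Lemma~\ref{l:11md} (giving $|J_{i,\lam}|\lesssim\lam^{-r}$); for $i=k+1,\dots,l$ use Lemma~\ref{l:mw} in the same way, with the minor modification of Proposition~\ref{p:12qibigd} if $q_i=\infty$. For $i=l+1,\dots,n$, repeat verbatim Steps~1--4 of Proposition~\ref{p:12qleqdall}: the Calder\'on--Zygmund decomposition of each $|\partial_j A_i|^{q_i}$ at level $\lam^r$ produces sets $B_\lam^{(i)}, D_\lam^{(i)}, F_\lam^{(i)}, H_\lam^{(i)}$, each of measure $\lesssim\lam^{-r}$. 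Let $G_\lam$ be a comparable open set containing $10$ times the union of all these sets, and perform a Whitney decomposition $G_\lam=\cup_k Q_k$ with anchor points $y_k\in (G_\lam)^c$ satisfying $\dist(y_k,Q_k)\approx l(Q_k)$. The arguments in Step~6 of Proposition~\ref{p:12qleqdall} (for $i>l$) and the definition of $\M$ (for $i\le l$) together give the crucial uniform Lipschitz bound $|\tilde{A}_i(x)-\tilde{A}_i(y)|\leq\lam^{r/q_i}|x-y|$ on all of $\R^d$, after Lipschitz extension.

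Once this is in hand, I split $f=f_1+f_2$ with $f_1=f\chi_{(G_\lam)^c}$. The estimate
\[
m\big(\{x\in(10G_\lam)^c:|\C[\nabla A_1,\dots,\nabla A_n,f_1](x)|>\lam/2\}\big)\lesssim\lam^{-r}
\]
follows from replacing $A_i$ by $\tilde{A}_i$ on $(G_\lam)^c$ and invoking Proposition~\ref{p:12strongr} (for $1<p<\infty$) or Proposition~\ref{p:12inftyweak} (for $p=1$), exactly as in~\eqref{e:12qbigd}. For $f_2$, I write $f_2=\sum_k f\chi_{Q_k}$ and split the product $\prod_{i=1}^n\tfrac{A_i(x)-A_i(y)}{|x-y|}$ into the three types of terms $I,II,III$ defined in~\eqref{e:12axyqbigd2}. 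Term $I$ (all factors using $\tilde{A}_i$) is again handled by Proposition~\ref{p:12strongr}/\ref{p:12inftyweak}; term $II$ (no $A_i(y_k)-A_i(y)$ factors) is controlled by the $T_{n-|N_1|}$ operator and Lemma~\ref{l:12disq1infty}, using~\eqref{e:12whitney2} as in Step~10 of Proposition~\ref{p:12qleqdall}.

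The main obstacle will be term $III$, where the set $N_3$ of factors of the form $(A_i(y_k)-A_i(y))/|x-y|$ is nonempty and now \emph{mixes both types of indices}. I will further split $N_3=N_3^{\ge}\cup N_3^{<}$ according to whether $q_i\ge d$ or $q_i<d$. For $i\in N_3^{\ge}$, using $y_k\in (J_{i,\lam})^c$, the factor is controlled pointwise by $\lam^{r/q_i}\,|y-y_k|/|x-y|$, which merges with $N_1\cup N_2$ and gets absorbed into a higher-order $T_s$ operator. For $i\in N_3^{<}$, I keep the factor unchanged and, as in~\eqref{e:12hmn}, estimate its $L^{s_i}$ average over $Q_k^*$ by $l(Q_k)\,\mathfrak{M}_{s_i}(\nabla A_i)(y_k)\le l(Q_k)\,\lam^{r/q_i}$ using Step~2 of Proposition~\ref{p:12qleqdall} and $y_k\in (D_{i,\lam})^c$. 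Setting $1/q=\sum_{i\in N_3^{<}}1/s_i+1/p$, H\"older's inequality reduces the weak-type count to $\|T_{n-|N_1|}(h)\|_{L^q}^q$ for an analogue of $h_{m,n}$; here the hypothesis $r\ge d/(d+n)$ is exactly what forces $1\le q\le\infty$, so that Lemma~\ref{l:12disq1infty} applies. Plugging in $|G_\lam|\lesssim\lam^{-r}$ at the end, all exponents collapse to $\lam^{-r}$, completing the proof. Extension to $q_i=\infty$ for some $i\le l$ follows the final paragraph of Proposition~\ref{p:12qibigd} by freezing those $A_i$'s before constructing the exceptional set.
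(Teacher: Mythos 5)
Your proposal is correct and follows essentially the same hybrid strategy as the paper: Mary Weiss maximal sets for the indices with $q_i\ge d$, a Calder\'on--Zygmund decomposition and the associated exceptional sets $B_\lam, D_\lam, F_\lam, H_\lam$ for the indices with $q_i<d$, a Whitney decomposition, and a reduction to Propositions~\ref{p:12strongr}/\ref{p:12inftyweak} together with Lemma~\ref{l:12disq1infty}. The only cosmetic difference is bookkeeping: the paper splits the product into four groups $I,II,III,IV$ (with $IV$ being the case $N_3\cap\{l+1,\dots,n\}\neq\emptyset$), whereas you use three groups and then subdivide $N_3=N_3^{\ge}\cup N_3^{<}$ inside $III$, bounding the $N_3^{\ge}$ factors pointwise by $\M(\nabla A_i)(y_k)\,|y-y_k|/|x-y|$ and keeping the $N_3^{<}$ factors for the $\mathfrak{M}_{s_i}$/H\"older step; this is mathematically identical to the paper's treatment of $III$ and $IV$.
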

\begin{proof}
The proof of this proposition is involved with the idea that we have done in the proof of Proposition \ref{p:12qibigd} for $q_i\geq d$ and Proposition \ref{p:12qleqdall} for $1\leq q_i<d$.
We will combine these two arguments in the proof of Proposition \ref{p:12qibigd} and Proposition \ref{p:12qleqdall}. One will see below that part of discussions have been appeared in the previous proposition.
So we shall be brief and only indicate necessary differences.

Now we start our proof. Our main goal is to prove that for any $\lam>0$, the following inequality holds
\Bes
\begin{split}
\lam^{r}m(\{&x\in\R^d:|\C[\nabla A_1,\cdots,\nabla A_n, f](x)|>\lam\})\\
&\lc\Big(\prod_{i=1}^k\|\nabla A_i\|^r_{L^{d,1}(\R^d)}\Big)\Big(\prod_{i=k+1}^n\|\nabla A_i\|^r_{L^{q_i}(\R^d)}\Big)\|f\|^r_{L^p(\R^d)}.
\end{split}
\Ees

Fix $\lam>0$. Recall $E_\lam$ defined in \eqref{e:12elam}.
By rescaling as showed in the proof of Proposition \ref{p:12qibigd} or Proposition \ref{p:12qleqdall}, it suffices to show $|E_\lam|\lc\lam^{-r}$. The main idea is to construct some {\it exceptional set\/} such that the measure of {\it exceptional set\/} is bounded by $\lam^{-r}$, which is our required estimate. At the same time on the complementary set of {\it exceptional set\/} these functions $A_i$ should be Lipschitz functions with bound $\lam^{\fr{r}{q_i}}$ for each $i=1,\cdots,n$. If $d\leq q_i<\infty$, the construction of {\it exceptional set\/} is similar to that of Proposition \ref{p:12qibigd}. And if $1\leq q_i<d$, the construction of {\it exceptional set\/} is similar to that of Proposition \ref{p:12qleqdall}. As we have done in Proposition \ref{p:12qibigd}, we only need to consider that all $q_{k+1},\cdots, q_l<\infty$. Below we begin our constructions of some {\it exceptional set\/}.

\subsubsection*{Step 1: Exceptional set $J_\lam$} Define the {\it exceptional set} {for $i=1,\cdots,l$}
\Bes
\begin{split}
J_{i,\lam}=\big\{x\in\R^d:\M (\nabla A_i)(x)>\lam^{\fr{r}{q_i}}\big\};\ \ J_\lam=\cup_{i=1}^l J_{i,\lam}.
\end{split}
\Ees

\subsubsection*{{Step 2: Calder\'on-Zygmund decomposition.}}
For each $|\pari_jA_i|^{q_i}\in L^1(\R^d)$ with $j=1,\cdots, d$ and $i=l+1,\cdots,n$, making a Calder\'on-Zygmund decomposition at level $\lam^r$ as we have done in the proof of Proposition \ref{p:12qleqdall},
one may get the properties of $g_{j,i}$, $b_{j,i}$, $E_{j,i}$, $A^g_{i,j}$, $A^b_{i,j}$ similarly.
Set the \emph{exceptional set}$B_\lam=\cup_{i=l+1}^n\cup_{j=1}^dE_{j,i}$.

\subsubsection*{Step 3: Exceptional set $D_\lam$.} Set $\fr{1}{s_i}=\fr{1}{q_i}-\fr{1}{d}$ for $i=l+1,\cdots,n$. Define the following \emph{exceptional set}
\Bes
D_{i,\lam}=\Big\{x\in\R^d:\mathfrak{M}_{s_i}(\nabla A_i)(x)>\lam^{\fr{r}{q_i}}\Big\},\ \ D_\lam=\cup_{i=l+1}^nD_{i,\lam}.
\Ees

\subsubsection*{Step 4: Exceptional set $F_\lam$.} For each $j=1,\cdots,d$, $i=l+1,\cdots,n$, we define the functions $\Delta^{j,i}(x)$ as those in the proof of Proposition \ref{p:12qleqdall}.
Define another \emph{exceptional set}
\Bes
F_{j,i,\lam}=\{x\in\R^d:\Delta^{j,i}(x)>1\},\ \ F_{\lam}=\cup_{j=1}^d\cup_{i=l+1}^n F_{j,i,\lam}.
\Ees

\subsubsection*{Step 5: Exceptional set $H_\lam$.} We define the {\it exceptional set\/} for $i=l+1,\cdots,n$, $j=1,\cdots,d$,
\Bes
H_{i,j,\lam}=\{x\in\R^d:\M(\nabla A^g_{i,j})(x)>\lam^{r/q_i}\},\ \ H_\lam=\cup_{i=l+1}^n\cup_{j=1}^d H_{i,j,\lam}.
\Ees

\subsubsection*{Step 6: Final exceptional set $G_\lam$} Based on the construction of $J_\lam, B_\lam, D_\lam, F_\lam, H_\lam$ in Steps 1-5,
we choose an open set $G_\lam$ which satisfies the following conditions:
\begin{enumerate}[(1).]
\item\quad $\big(10J_\lam\cup 10B_\lam\cup 10D_\lam\cup 10F_\lam\cup 10H_\lam\big)\subset G_\lam$;
\item\quad $m(G_\lam)\leq(20)^d(|J_\lam|+|B_\lam|+|D_\lam|+|F_\lam|+|H_\lam|)$.
\end{enumerate}
As showed in the proof of Proposition \ref{p:12qibigd} and Proposition \ref{p:12qleqdall}, one may get that the measures of $J_\lam$, $B_{\lam}$, $D_\lam$, $F_\lam$ and $H_\lam$ are bounded by $\lam^{-r}$. So we see that $m(G_\lam)\lc\lam^{-r}$. Next making a Whitney decomposition of $G_\lam$, we may get a family of disjoint dyadic cubes $\{Q_k\}_k$ and then we construct a larger cube $Q_k^*$ so that $Q_k\subset Q_k^*$, $Q_k^*$ is centered at $y_k$ and $y_k\in (G_\lam)^c$, $l(Q_k^*)\approx l(Q_k)$.
By the construction of $Q_k^*$ and $y_k$, one may get
\Be\label{e:12whitney3}
dist(y_k,Q_k)\approx l(Q_k).
\Ee
In the following we will show that these functions $A_i$ are Lipschitz functions on the complementary set of $G_\lam$.

\subsubsection*{Step 7: Lipschitz estimates of $A_i$ on $(G_\lam)^c$} Choose any $x,y\in(G_\lam)^c$. By the exceptional set $J_\lam$ constructed in Step 1, we see that for $i=1,\cdots,l$
\Be\label{e:12lipgeqd}
|A_i(x)-A_i(y)|\leq\lam^{\fr{r}{q_i}}|x-y|.
\Ee
Below we consider $i=l+1,\cdots,n$.
By the Calder\'on-Zygmund decomposition in Step 2, it suffices to show that $A_{i,j}^g$ and $A_{i,j}^b$ satisfy Lipschitz estimates on $\big(G_\lam\big)^c$ for each $i=l+1,\cdots,n$ and $j=1,\cdots,d$. Firstly, one may easily see that $A_{i,j}^g$ satisfies Lipschitz estimates by the construction of $H_\lam$ in Step 5. In fact, $x,y\in (G_\lam)^c$ implies that $x,y\in H^c_\lam$, we get for $i=l+1,\cdots,n$, $j=1,\cdots,d$,
\Be\label{e:12lipg3}
|A_{i,j}^g(x)-A_{i,j}^g(y)|\leq\lam^{\fr{r}{q_i}}|x-y|.
\Ee

While considering $A_{i,j}^b$, we see that by using the similar method that we prove \eqref{e:12lipb} in Proposition \ref{p:12qleqdall}, we get for $i=l+1,\cdots,n$, $j=1,\cdots,d$,
\Be\label{e:12lipb3}
\big|A_{i,j}^{b}(x)-A_{i,j}^{b}(y)\big|\lc\lam^{\fr{r}{q_i}}|x-y|.
\Ee

Therefore we conclude the Lipschitz estimates in \eqref{e:12lipgeqd} for $i=1,\cdots,l$, good function \eqref{e:12lipg3} and bad function \eqref{e:12lipb3} for $i=l+1,\cdots,n$, to get that for any $i=1,\cdots,n$, $x,y\in (G_\lam)^c$,
\Be\label{e:12qileqdlip3}
|A_i(x)-A_i(y)|\leq\lam^{\fr{r}{q_i}}|x-y|.
\Ee
\vskip 0.24cm

\subsubsection*{Step 8: Estimate of $E_\lam$} As we have done in the proof of Proposition \ref{p:12qibigd}, we may split $f=f_1+f_2$. Following \eqref{e:12spmu} and \eqref{e:12qbigd}, we may reduce the estimate of $E_\lam$ to the following inequality
\Bes
m\big(\{x\in (10G_\lam)^c:|\mathcal{C}[\nabla A_1,\cdots,\nabla A_n,f_2](x)|>\lambda/2\}\big)\lc\lam^{-r}.
\Ees

\subsubsection*{Step 9: Estimate of $\mathcal{C}[\nabla A_1,\cdots,\nabla A_n, f_2](x)$}  Recall $\N_i^j=\{i,i+1,\cdots,j\}$ and our construction of $G_\lam$, $y_k$, $Q_k$ and $Q_k^*$ in the paragraph above \eqref{e:12whitney3}. Then we can write $f_2=\sum_k f\chi_{Q_k}$. Therefore we may get
$$\mathcal{C}[\nabla A_1,\cdots,\nabla A_n, f_2](x)=\sum_k\mathcal{C}[\nabla A_1,\cdots,\nabla A_n, f\chi_{Q_k}](x).$$
Below we study $\prod_{i=1}^n\fr{{A}_i(x)-A_i(y)}{|x-y|}$. We will separate it into several terms and then give an estimate for each term.  Write
\Bes \prod_{i=1}^n\fr{{A}_i(x)-A_i(y)}{|x-y|}=I(x,y)+II(x,y,y_k)+III(x,y,y_k)+IV(x,y,y_k),
\Ees
where $I(x,y)$, $II(x,y,y_k)$, $III(x,y,y_k)$ and $IV(x,y,y_k)$ are defined as follows
\Be\label{e:12axyqbigd3}
\begin{split}
I=&\prod_{i=1}^n\fr{\tilde{A}_i(x)-\tilde{A}_i(y)}{|x-y|},\\
II=&\sum_{N_1\subsetneq\N_{1}^n\atop N_3=\emptyset}\Big(\prod_{i\in N_1}\fr{\tilde{A}_i(x)-\tilde{A}_i(y)}{|x-y|}\Big)\Big(\prod_{i\in N_2}\fr{\tilde{A}_i(y)-\tilde{A}_i(y_k)}{|x-y|}\Big),\\
III=&\sum_{N_1\subsetneq\N_{1}^n\atop N_3\neq\emptyset, N_3\subset\{1,\cdots,l\}}\Big(\prod_{i\in N_1}\fr{\tilde{A}_i(x)-\tilde{A}_i(y)}{|x-y|}\Big)
\Big(\prod_{i\in N_2}\fr{\tilde{A}_i(y)-\tilde{A}_i(y_k)}{|x-y|}\Big)\\
&\qquad\qquad\qquad\times\Big(\prod_{i\in N_3}\fr{{A}_i(y_k)-{A}_i(y)}{|x-y|}\Big),\\
IV=&\sum_{N_1\subsetneq\N_{1}^n\atop N_3 \neq\emptyset, N_3\cap\{l+1,\cdots,n\}\neq\emptyset}\Big(\prod_{i\in N_1}\fr{\tilde{A}_i(x)-\tilde{A}_i(y)}{|x-y|}\Big)
\Big(\prod_{i\in N_2}\fr{\tilde{A}_i(y)-\tilde{A}_i(y_k)}{|x-y|}\Big)\\
&\qquad\qquad\qquad\times\Big(\prod_{i\in N_3}\fr{{A}_i(y_k)-{A}_i(y)}{|x-y|}\Big),
\end{split}
\Ee
here $\N_{1}^n=N_1\cup N_2\cup N_3$ with $N_1$, $N_2$, $N_3$ non intersecting each other. By the above decomposition, we in fact divide $\mathcal{C}[\nabla A_1,\cdots,\nabla A_n, f\chi_{Q_k}](x)$ into $3^n$ terms. We separate these terms into four parts according $I$, $II$, $III$ and $IV$.

\subsubsection*{Step 10: Estimate of $\mathcal{C}[\cdots,\cdot]$ related to $I$.}  Since $I$ is the same as $I$ term in the proof of Proposition \ref{p:12qibigd}, so this estimate is similar to that there. We omit the proof here.

\subsubsection*{Step 11: Estimate of $\mathcal{C}[\cdots,\cdot]$ related to $II$.} This estimate is similar to the term related to $II$ in the proof of Proposition \ref{p:12qleqdall}. So we omit the proof.

\subsubsection*{Step 12: Estimate of $\mathcal{C}[\cdots,\cdot]$ related to $III$.}
It suffices to consider one term $\mathcal{C}[\cdots,\cdot]$ related to $III$ in which $N_1$ is a proper subset of $\N_1^n$ and $N_3$ is a nonempty subset of $\{1,\cdots,l\}$. By the condition in this proposition, for any $i\in N_3$, $d\leq q_i<\infty$. Thus $\nabla A_i\in L^{q_i}(\R^d)$ (or $L^{d,1}(\R^d)$ if $q_i=d$) with $d\leq q_i<\infty$. Therefore the estimates in $N_3$ will be straightforward since
\Be\label{e:12mbablaiN3}
\fr{|A_i(y_k)-A_i(y)|}{|y-y_k|}\leq\M(\nabla A_i)(y_k)\leq\lam^{\fr{r}{q_i}}, \ \text{for $i\in N_3$.}
\Ee
Once we give the above estimate in $N_3$, the rest terms related to $N_1$ and $N_2$ can be dealt as the same way to those related to $II$. For the rest of the proof, one can follow the term related to $II$ in the proof of Proposition \ref{p:12qleqdall}.

\subsubsection*{Step 13: Estimate of $\mathcal{C}[\cdots,\cdot]$ related to $IV$.}
It suffices to consider one term $\mathcal{C}[\cdots,\cdot]$ related to $IV$ in which $N_1$ is a proper subset of $\N_1^n$ and $N_3$ is a nonempty set with $N_3\cap\{l+1,\cdots,n\}\neq\emptyset$.
In this case, without loss of generality, we may assume $l+1,\cdots, v\in N_3$ with $l+1\leq v\leq n$ and $v+1,\cdots,n$ belongs to $N_1$ or $N_2$. So we may suppose that $N_3=\{\iota,\cdots,w, l+1,\cdots, v\}$ with $0\leq\iota\leq w\leq l$ and $N_1\neq\emptyset$. Set $u=\card (N_1)$. Then $n-u\geq1$. With these notation, it is easy to see that $N_3$ is a nonempty set with $N_3\cap\{l+1,\cdots,n\}\neq\emptyset$.
We use $IV(x,y,y_k)$ to represent one term related to $N_1$, $N_2$ and $N_3$ in \eqref{e:12axyqbigd3} and use $H_{IV}(x)$ to represent $\mathcal{C}[\cdots,\cdot]$ related to ${IV}(x,y,y_k)$, i.e.
$$H_{IV}(x)=\sum_k\int_{Q_k}K(x-y)IV(x,y,y_k)f(y)dy.$$

By our condition, $d\leq q_{l},\cdots,q_l\leq\infty$ and $1\leq q_{l+1},\cdots,q_n<d$. Recall in Step 3, we set $\fr{1}{s_i}=\fr{1}{q_i}-\fr{1}{d}$ for $i=l+1,\cdots,n$. We also set $\fr{1}{q}=\big(\sum_{i=l+1}^v\fr{1}{s_i}\big)+\fr{1}{p}.$
Since $r\geq d/(d+n)$ and $\fr{1}{r}=\big(\sum_{i=1}^n\fr{1}{q_i}\big)+\fr{1}{p}$, by some elementary calculation, one may get $1\leq q\leq\infty$. With the above fact and $\tilde{A}_i$ is a Lipschitz function with bound $\lam^{r/q_i}$ for $i\in N_1\cup N_2$, we get
\Bes
\begin{split}
|IV(x,y,y_k)|&\lc\lam^{(\sum_{i\in N_1\cup N_2})\fr{r}{q_i}}\fr{(l(Q_k))^{n-u}}{|x-y|^{n-u}}\Big(\prod_{i=\iota}^w\M(\nabla A_i)(y_k)\Big)\\
&\qquad\qquad\qquad\times\prod_{i=l+1}^v\fr{|A_i(y_k)-A_i(y)|}{l(Q_k)}\\
&\lc\lam^{(\sum_{i=1}^l+\sum_{i=v+1}^n)\fr{r}{q_i}}\fr{(l(Q_k))^{n-u}}{|x-y|^{n-u}}\prod_{i=l+1}^v\fr{|A_i(y_k)-A_i(y)|}{l(Q_k)}.
\end{split}
\Ees
Then inserting the above estimate of $IV$ into $H_{IV}$ with \eqref{e:12kb}, we get
\Bes
\begin{split}
|H_{IV}(x)|&\leq\sum_k\int_{Q_k}|K(x-y)|\cdot|IV(x,y,y_k)|\cdot|f(y)|dy\\
&\lc\lam^{(\sum_{i=1}^l+\sum_{i=v+1}^n)\fr{r}{q_i}}\sum_k\int_{Q_k}\fr{l(Q_k)^{n-u}}{[l(Q_k)+|x-y|]^{d+n-u}}h_{l,v}(y)dy\\
&=\lam^{(\sum_{i=1}^l+\sum_{i=v+1}^n)\fr{r}{q_i}}T_{n-u}\big(h_{l,v}\big)(x).
\end{split}
\Ees
Now the rest of the proof is similar to \eqref{e:12Glam3qleqd} in the proof of Proposition \ref{p:12qleqdall}. We omit the details here.
\end{proof}
\begin{prop}\label{p:12qgeqleqdinfty}
Let $d/(d+n)\leq r\leq 1$, $q_{1}=\cdots=q_{k}=d$, $d< q_{k+1},\cdots,q_l\leq\infty$ and $1\leq q_{l+1},\cdots,q_n< d$ with $0\leq k\leq l$ and $1\leq l<n$, $p=\infty$. Then the weak type estimate \eqref{e:12weakqbg} holds.
\end{prop}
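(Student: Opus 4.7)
The plan is to take the skeleton of Proposition~\ref{p:12qgeqleqd} intact and to replace only the four $p$-dependent arguments by their $p=\infty$ counterparts developed in Proposition~\ref{p:12qleqinfty}. The constructions of the exceptional sets $J_\lam,B_\lam,D_\lam,F_\lam,H_\lam$ and of the open set $G_\lam$ in Steps~1--6 of Proposition~\ref{p:12qgeqleqd} depend only on $\nabla A_1,\dots,\nabla A_n$, so they carry over verbatim, together with $m(G_\lam)\lc\lam^{-r}$, the Whitney decomposition $\{Q_k\}$ with centers $y_k\in(G_\lam)^c$ satisfying $\mathrm{dist}(y_k,Q_k)\approx l(Q_k)$, the Lipschitz estimates $|A_i(x)-A_i(y)|\le\lam^{r/q_i}|x-y|$ on $(G_\lam)^c$ for $i=1,\dots,n$, and the resulting Lipschitz extensions $\tilde A_i$. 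I would then redefine $E_\lam=\{x:|\mathcal{C}[\nabla A_1,\dots,\nabla A_n,f](x)|>C_0\lam\}$ with $C_0$ to be fixed later, split $f=f_1+f_2$ with $f_1=f\chi_{(G_\lam)^c}$, and reduce $m(E_\lam)$ to three pieces as in Proposition~\ref{p:12qleqinfty}.

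For the $f_1$-piece I would apply the auxiliary index trick of Proposition~\ref{p:12qleqinfty}: choose $\tilde r,\tilde q_1,\dots,\tilde q_n$ with $1<\tilde r<\infty$, $q_i<\tilde q_i<\infty$ and $1/\tilde r=\sum_i 1/\tilde q_i$, use Proposition~\ref{p:12strongr} on the cutoffs $\nabla(A_i\chi_{(G_\lam)^c})$, and interpolate $\|\nabla(A_i\chi_{(G_\lam)^c})\|_{L^{\tilde q_i}}$ between the Lipschitz bound $\lam^{r/q_i}$ on $(G_\lam)^c$ and $\|\nabla A_i\|_{L^{q_i}}$ (or $\|\nabla A_i\|_{L^{d,1}}$ for the indices $q_i=d$, using $L^{d,1}\subset L^d$). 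For the $f_2$-piece I would keep the four-term decomposition \eqref{e:12axyqbigd3} and treat it as follows: for $I$, apply the weak $(1,1)$ bound from Proposition~\ref{p:12inftyweak} to $\mathcal{C}[\nabla\tilde A_1,\dots,\nabla\tilde A_n,f_2]$ combined with $\|f_2\|_{L^1}\le\|f\|_{L^\infty}m(G_\lam)\lc\lam^{-r}$; for $II$, the pointwise inequality in Step~11 of Proposition~\ref{p:12qleqdall} together with Lemma~\ref{l:12disq1infty} at $q=\infty$ produces $|H_{II}(x)|\le C_d\lam^{\sum_i r/q_i}\|f\|_{L^\infty}=C_d\lam$ on $(10G_\lam)^c$, so fixing $C_0>4C_d$ makes $\{|H_{II}|>C_0\lam/4\}$ empty; for $III$, since the relevant indices lie in $\{1,\dots,l\}$ with $q_i\ge d$, the estimate $|A_i(y_k)-A_i(y)|/|y-y_k|\le\M(\nabla A_i)(y_k)\le\lam^{r/q_i}$ inherited from $y_k\in (J_\lam)^c$ reduces this term to the same form as $II$.

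The main obstacle is Part $IV$, where at least one factor involves an index $i\in\{l+1,\dots,n\}$ with $q_i<d$ and is not converted into a Lipschitz bound. Here the argument of Step~13 of Proposition~\ref{p:12qgeqleqd} still applies: the product of the rough factors is absorbed into an auxiliary function $h_{l,v}$, and $H_{IV}$ is dominated by $\lam^{(\sum_{i\le l}+\sum_{i>v})r/q_i}T_{n-u}(h_{l,v})(x)$. The crucial exponent balance $1/q=\sum_{i=l+1}^v 1/s_i+1/p$, combined with the hypothesis $r\ge d/(d+n)$, forces $1\le q\le\infty$, so Lemma~\ref{l:12disq1infty} is applicable; for $p=\infty$ one simply has $1/q=\sum_{i=l+1}^v 1/s_i$. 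The estimate of $\|h_{l,v}\|_{L^q}$ then proceeds by H\"older on each $Q_k$ together with $\mathfrak M_{s_i}(\nabla A_i)(y_k)\le\lam^{r/q_i}$ from Step~3, replacing the $L^p$-integral of $|f|$ by $\|f\|_{L^\infty}^q$ and using disjointness of the cubes to bound $\sum_k|Q_k|^{1+\sum q/s_i}$ by $m(G_\lam)^{\sum q/s_i}\lc\lam^{-r\sum q/s_i}$. Tracking the powers of $\lam$ through the final Chebyshev step yields the desired $\lam^{-r}$. As in Proposition~\ref{p:12qleqgeqinfty}, this is essentially routine once the bookkeeping of the exponents in the $p=\infty$ limit is handled carefully, and the only real subtlety is ensuring $1\le q\le\infty$, which is exactly where the endpoint condition $r\ge d/(d+n)$ enters.
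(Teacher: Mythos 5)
Your proposal reconstructs exactly what the paper intends: the paper's ``proof'' of this proposition is literally a one-line remark deferring to Propositions~\ref{p:12qgeqleqd}, \ref{p:12qleqinfty} and \ref{e:12qleqgeqinfty}, and you fill in precisely those pieces --- keeping the exceptional-set construction and four-term decomposition $I,II,III,IV$ from Proposition~\ref{p:12qgeqleqd}, and substituting the four $p=\infty$ modifications (the constant $C_0$ in the level set, the auxiliary-index argument for $f_1$, the weak $(1,1)$ bound with $\|f_2\|_{L^1}\lc\lam^{-r}$ for $I$, and the pointwise $L^\infty$ bound showing $II$ contributes an empty set) taken from Proposition~\ref{p:12qleqinfty}, while $III$ and $IV$ carry over as in Steps~12--13 of Proposition~\ref{p:12qgeqleqd} with $1/p=0$. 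One small bookkeeping slip: after the H\"older step on each $Q_k$ the relevant sum is $\sum_k |Q_k|^{\sum q/s_i}$ (which, since $\sum q/s_i=1$ at $p=\infty$, is just $\sum_k|Q_k|=m(G_\lam)\lc\lam^{-r}$), not $\sum_k|Q_k|^{1+\sum q/s_i}$; the final exponent count $-q+\sum_i qr/q_i-r=-r$ is nonetheless correct.
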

\begin{proof}
The proof is similar to that of Proposition \ref{p:12qgeqleqd}, one may follow the idea in the proof of Proposition \ref{p:12qleqinfty} and Proposition \ref{e:12qleqgeqinfty}. We omit the details here.
\end{proof}
\vskip0.24cm
\subsection{Multilinear interpolation arguments}\label{s:1226}\quad
\vskip0.24cm
Notice that we have already proven all cases (ii) in Theorem \ref{t:12} by Propositions \ref{p:12inftyweak}, \ref{p:12qibigd}, \ref{p:12qleqinfty}, \ref{p:12qleqdall}, \ref{e:12qleqgeqinfty}, \ref{p:12qgeqleqd}, \ref{p:12qgeqleqdinfty}. And we also prove the case $1\leq r<\infty$ of (i) in Theorem \ref{t:12} by Proposition \ref{p:12strongr}. The rest part of (i) in Theorem \ref{t:12} follows from the standard multilinear interpolation.
In fact,  in the case $1\leq r<+\infty$, for all point $(\fr{1}{q_1},\cdots,\fr{1}{q_n},\fr{1}{p})$ in the polyhedron $\big(\sum_{i=1}^n\fr{1}{q_i}\big)+\fr{1}{p}=\fr{1}{r}$, we have the follow strong type estimate
\Bes
\|\C[\nabla A_1,\cdots,\nabla A_n, f]\|_{L^r(\R^d)}\lc\Big(\prod_{i=1}^n\|\nabla A_i\|_{L^{q_i}(\R^d)}\Big)\|f\|_{L^p(\R^d)}.
\Ees
In the case $r=\fr{d}{d+n}$, for all points $(\fr{1}{q_1},\cdots,\fr{1}{q_n},\fr{1}{p})$ in the plane $\big(\sum_{i=1}^n\fr{1}{q_i}\big)+\fr{1}{p}=\fr{d+n}{d}$, we have the weak type estimate
$$\|\mathcal{C}[\nabla A_1,\cdots,\nabla A_n,f]\|_{L^{\fr{d}{d+n},\infty}(\R^d)}\leq C \Big(\prod_{i=1}^n\|\nabla A_i\|_{L^{q_i}(\R^d)}\Big)\|f\|_{L^p(\R^d)}$$
where $L^{q_i}(\R^d)$ in the above inequality should be replaced by $L^{d,1}(\R^d)$ if $q_i=d$ for some $i=1,\cdots,n$. Notice that in the rest part of (i) in Theorem \ref{t:12}, we consider $\fr{d}{d+n}<r<1$, $1<q_i\leq\infty$ $(i=1,\cdots,n)$ and $1<p\leq\infty$, thus the point $(\fr{1}{q_1},\cdots,\fr{1}{q_n},\fr{1}{p})$ lies in the interior of the polyhedron between the polyhedron $\big(\sum_{i=1}^n\fr{1}{q_i}\big)+\fr{1}{p}=\fr{1}{r}$ with $1\leq r<\infty$ and
plane $\big(\sum_{i=1}^n\fr{1}{q_i}\big)+\fr{1}{p}=\fr{d+n}{d}$. Then if we choose $n+1$ points in the polyhedron $\big(\sum_{i=1}^n\fr{1}{q_i}\big)+\fr{1}{p}=\fr{1}{r}$ with $1\leq r<\infty$ and one point in the plane $\big(\sum_{i=1}^n\fr{1}{q_i}\big)+\fr{1}{p}=\fr{d+n}{d}$, using the multilinear interpolation theorem (see \cite[Theorem 7.2.2]{Gra250}), we get all strong type estimate in (i). Therefore we complete the proof of (i) and (ii) in Theorem \ref{t:12}. Proof of (iii) in Theorem \ref{t:12} will be given in the next subsection.
\vskip0.24cm
\subsection{Examples}\label{s:1227}\quad

\begin{prop}\label{p:12exa}
If $0<r<\fr{d}{d+n}$, $1\leq q_i\leq\infty$ $(i=1,\cdots,n)$ and $1\leq p\leq\infty$, there exist functions $K$, $A_i$ ($i=1,\cdots,n$) and $f$ such that $K$ satisfies \eqref{e:12kb}, \eqref{e:12K_2}, \eqref{e:12kr}; $\|\nabla A_i\|_{L^{q_i}(\R^d)}<\infty$ for $i=1,\cdots,n$ and $\|f\|_{L^p(\R^d)}<\infty.$ But
\Bes
\begin{split}
\mathcal{C}[\nabla A_1,\cdots,\nabla A_n,f](x)=\infty\ \ \text{in a ball in $\R^d$.}
\end{split}
\Ees
\end{prop}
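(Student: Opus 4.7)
The strategy is to exploit a scaling mismatch that appears precisely when $r<\fr{d}{d+n}$: for natural power-law singular profiles $A_i$ and $f$, the integrand defining $\C[\nabla A_1,\cdots,\nabla A_n,f](x)$ has a non-integrable radial singularity while $\nabla A_i\in L^{q_i}(\R^d)$ and $f\in L^p(\R^d)$ nevertheless hold. I would carry this out first at a single point and then upgrade to a ball by superposition.

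For the single-point blow-up, fix $K(x)=\Om(x/|x|)|x|^{-d}$ with $\Om$ smooth satisfying \eqref{e:12K_2} but with $\int_{\S^{d-1}}\Om\,d\si\ne 0$ (constant $\Om$ works for odd $n$; for even $n$ one takes an odd spherical function so that $n$-th moments vanish by parity). Pick $\eps_i\in(0,d/q_i)$, $\beta\in(0,d/p)$, and a smooth bump $\eta$ equal to $1$ near $0$, and set
\Bes
A_i^{(0)}(y)=|y|^{1-\eps_i}\om_i(y/|y|)\eta(y),\qquad f^{(0)}(y)=|y|^{-\beta}\eta(y),
\Ees
with angular weights $\om_i$ on $\S^{d-1}$. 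The size bounds $|\nabla A_i^{(0)}(y)|\lc|y|^{-\eps_i}$ and $|f^{(0)}(y)|\lc|y|^{-\beta}$ place them in $L^{q_i}(\R^d)$ and $L^p(\R^d)$ respectively. Since $A_i^{(0)}(0)=0$, at $x=0$ the integrand in polar coordinates becomes $r^{-1-\sum_i\eps_i-\beta}\,\Om(-\tet)\prod_i(-\om_i(\tet))\,\eta^{n+1}(r\tet)$, whose radial integral diverges at $0$. The angular integral $\int_{\S^{d-1}}\Om(-\tet)\prod_i\om_i(\tet)\,d\si$ can be made nonzero by choosing the $\om_i$'s appropriately, since \eqref{e:12K_2} constrains only the $n$-th moments of $\Om$ while the $\om_i$'s live in an infinite-dimensional angular function space. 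This forces $\C[\nabla A_1^{(0)},\cdots,f^{(0)}](0)=\infty$.

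To upgrade to divergence on a full ball, I would superpose scaled-translated copies. Take a ball $B_0\subset\R^d$ and, at each dyadic scale $k$, a $2^{-k}$-net $\{z_{k,j}\}_{j=1}^{M_k}$ in $B_0$ with $M_k\asymp 2^{kd}$, and define
\Bes
A_i=\sum_{k,j}c_k\,A_i^{(0)}\bigl(2^k(\cdot-z_{k,j})\bigr)/2^k,\qquad f=\sum_{k,j}d_k\,f^{(0)}\bigl(2^k(\cdot-z_{k,j})\bigr),
\Ees
with the bumps chosen so that supports are disjoint. The scaling identity
\Bes
\C\bigl[\nabla A_i^{(0)}(2^k\cdot)/2^k,\cdots,f^{(0)}(2^k\cdot)\bigr](x)=\C[\nabla A_i^{(0)},\cdots,f^{(0)}](2^kx)
\Ees
shows that the $(k,j)$-block contributes $c_k^nd_k$ times the base profile evaluated at the rescaled point, while the norm budgets are $\|\nabla A_i\|_{L^{q_i}}^{q_i}\asymp\sum_kc_k^{q_i}$ and $\|f\|_{L^p}^p\asymp\sum_kd_k^p$. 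Arranging $\sum_kc_k^{q_i}$ and $\sum_kd_k^p$ to converge while the sum of contributions at every $x\in B_0$ diverges reduces to an exponent inequality equivalent to $\fr{n}{q_i}+\fr{1}{p}>\fr{d+n}{d}$, i.e.\ precisely $r<\fr{d}{d+n}$.

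The main obstacle is this last balancing step: I need the diagonal-in-$k$ contributions to accumulate to $+\infty$ coherently at every $x$ in a sub-ball of $B_0$ (not merely on the dense set $\{z_{k,j}\}$), while simultaneously controlling the off-diagonal cross-terms produced by the multilinearity of $\C$ and keeping the $L^{q_i}$ and $L^p$ budgets finite. The scaling headroom afforded by $r<\fr{d}{d+n}$ is what makes all three compatible, which also explains the sharpness of the threshold.
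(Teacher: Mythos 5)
The single-point blow-up at $x=0$ is sound, but the superposition step that is supposed to upgrade it to a ball is a genuine gap, and in fact the exponent counting you invoke goes the wrong way.  With the scaling you set up, the $L^{q_i}$ and $L^p$ budgets are $\sum_k c_k^{q_i}$ and $\sum_k d_k^{p}$, and the contribution at a generic $x\in B_0$ from the (at most $O(1)$) scale-$k$ blocks near $x$ is $\lesssim c_k^n d_k$ times a bounded quantity, so you need $\sum_k c_k^n d_k=\infty$.  But in the regime $r<\fr{d}{d+n}$ you have $\sum_i\fr{1}{q_i}+\fr{1}{p}=\fr{1}{r}>\fr{d+n}{d}>1$, and then H\"older (or the embedding $\ell^{p}\subset\ell^{s}$ with $1/s=1-\sum_i 1/q_i$, together with $c_k\to0$) forces $\sum_k c_k^n d_k<\infty$ whenever $\sum_k c_k^{q_i}<\infty$ and $\sum_k d_k^{p}<\infty$.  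So the three requirements you list are incompatible precisely in the range of $r$ you need, not compatible as you claim.  Trying to rescue the diagonal sum by exploiting the algebraic singularity $\C^{(0)}(\xi)\sim|\xi|^{-\mu}$ near $\xi=0$ together with a Borel--Cantelli count of net hits runs into the same obstruction, because the attainable $\mu$ satisfies $\mu<d/r$ and the measure of the hit sets then sums to a convergent series.  Beyond this, the cross-terms generated by multilinearity (products of $A_i$ pieces from different blocks hitting the same $f$ piece, and vice versa) are not controlled, and you flag these issues yourself as ``the main obstacle,'' so the argument is not complete as written.

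The paper avoids superposition entirely by a geometric device.  It places all the singular mass of $A_1,\dots,A_n$ and $f$ on a single power-law profile $|y|^{-\alpha_i}$, $|y|^{-\beta}$ supported in a cone $Cone_{\rho}$ with vertex at the origin, extends each $A_i$ by zero outside a slightly larger cone $Cone_{\rho,\eps}$, takes $K(x)=|x|^{-d}$ (and assumes $n$ odd so that the moment condition \eqref{e:12K_2} holds by parity; an odd kernel handles even $n$), and then evaluates $\C[\nabla A_1,\dots,\nabla A_n,f](x)$ at points $x$ in a small ball near $z_0=(-2\eps,0,\dots,0)$, which lies \emph{outside} $Cone_{\rho,\eps}$.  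For such $x$ one has $A_i(x)=0$, so each factor $\fr{A_i(x)-A_i(y)}{|x-y|}$ reduces to $\fr{-A_i(y)}{|x-y|}$ with no cancellation; the integrand over $Cone_\rho$ becomes a single-signed multiple of $\prod_i|y|^{-\alpha_i}\cdot|y|^{-\beta}\cdot|x-y|^{-(d+n)}$.  Since $|x-y|$ is bounded above uniformly for $y\in Cone_\rho$ and $x$ near $z_0$, choosing $\sum_i\alpha_i+\beta=d$ makes the $y$-integral $\gtrsim\int_{Cone_\rho}|y|^{-d}\,dy=\infty$ simultaneously for every $x$ in that ball.  The threshold $r<\fr{d}{d+n}$ enters only through the compatibility of the exponent budget: one needs $\alpha_i<\fr{d}{q_i}-1$ and $\beta<\fr{d}{p}$ together with $\sum_i\alpha_i+\beta=d$, and the supremum of $\sum_i\alpha_i+\beta$ under those open constraints is $\fr{d}{r}-n$, which exceeds $d$ exactly when $r<\fr{d}{d+n}$.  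This is both cleaner and the reason the threshold appears; I would recommend replacing the superposition scheme with this cone construction.
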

\begin{proof}
We may suppose that $n$ is an odd integer. Then we may choose $K$ as
$$K(x)=\fr{1}{|x|^d}, \ \ \text{for $x\in\R^d\setminus\{0\}$}.$$
It is easy to see that $K$ satisfies \eqref{e:12kb}, \eqref{e:12K_2} and \eqref{e:12kr}.

Next we choose $\alp_i\ (i=1,\cdots,n)$ and $\beta$ such that  $-1\leq\alp_i<1$ for $i=1,\cdots,n$, $\beta\geq0$ and $(\sum_{i=1}^n\alp_i)+\beta=d$.
For each $i=1,\cdots,n$, we choose $A_i$ as a $C^\infty$ function in $\R^d\setminus\{0\}$ such that
\Bes
A_i(x)=\begin{cases}
&{|x|^{-\alp_i}}\ \ \text{if $x\in\ Cone_{\rho}$;} \\
&0\ \ \qquad\text{if $x\notin\ Cone_{\rho,\eps}$},
\end{cases}
\Ees
where $Cone_{\rho}$ and $Cone_{\rho,\eps}$ are defined as follows
\Bes
\begin{split}
Cone_{\rho}&=\big\{x\in\R^d: \kappa\sum_{j=2}^dx_j^2<x_1^2,\ 0<x_1<\rho\big\},\\
Cone_{\rho,\eps}&=\big\{x\in\R^d: \kappa\sum_{j=2}^dx_j^2<(x_1+\eps)^2,\ -\eps<x_1<\rho+\eps\big\},
\end{split}
\Ees
here $\rho$, $\eps$ are fixed positive constants and $\kappa$ is a large positive constant.
Below we choose
\Bes
f(x)=\begin{cases}
&{|x|^{-\beta}}\ \ \text{if $x\in\ Cone_{\rho}$,} \\
&0\ \qquad\text{if $x\notin\ Cone_{\rho}$}.
\end{cases}
\Ees
By some elementary calculation, one can easily get that for $1\leq q_i<\infty$ and $1\leq p<\infty$
\Bes
\begin{split}
\|\nabla A_i\|_{L^{q_i}(\R^d)}<\infty\ \ \text{if}\ \ {1\leq q_i<\fr{d}{1+\alp_i}};\ \
\|f\|_{L^p(\R^d)}<\infty\ \ \text{if}\ \ 1\leq p<\fr{d}{\beta}.
\end{split}
\Ees
If $q_i=\infty$, we may choose $\alp_i=-1$, then we get $\|\nabla A_i\|_{L^\infty(\R^d)}<\infty$. If $p=\infty$, we may choose $\beta=0$, then $\|f\|_{L^\infty(\R^d)}<\infty$. Since $(\sum_{i=1}^n\alp_i)+\beta=d$, it is impossible that all $q_i$ and $p$ equal to $\infty$. Then by this choice of $q_i$ and $p$, we see that $0<r<d/{(d+n)}$.

Set $z_0=(-2\eps,0,\cdots,0)$. Let $x$ be a point in the small neighborhood of $z_0$ such that
$$|x-y|\leq{C}{|\rho+4\eps|}\ \ \text{for all $y\in Cone_{\rho}$}.$$
Then combining the choice of $A_i$  ($i=1,\cdots,n$) and $f$, and noticing that $n$ is a odd integer, we finally get
\Bes
\begin{split}
-\mathcal{C}[\nabla A_1,\cdots,\nabla A_n,f](x)&=\int_{Cone_{\rho}}K(x-y)\Big[\prod_{i=1}^n\fr{A_i(y)}{|x-y|}\Big]f(y)dy\\
&\geq\fr{C}{|\rho+4\eps|^{d+n}}\int_{Cone_{\rho}}\fr{dy}{|y|^{d}}=+\infty.
\end{split}
\Ees

\end{proof}
\vskip0.24cm
\section{Proof of Theorem \ref{t:12r}}\label{s:123}
\vskip0.24cm
In this section, we just outline the proof of Theorem \ref{t:12r} since it is similar to that of Theorem \ref{t:12}.
\begin{prop}\label{p:12strongrom}
Let $1\leq r<+\infty$, $1<q_i\leq\infty$, $i=1,\cdots,n$, $1<p\leq\infty$. Then
\Bes
\|\C_\Om[\nabla A_1,\cdots,\nabla A_n, f]\|_{L^r(\R^d)}\lc C_\Om\Big(\prod_{i=1}^n\|\nabla A_i\|_{L^{q_i}(\R^d)}\Big)\|f\|_{L^p(\R^d)}.
\Ees
\end{prop}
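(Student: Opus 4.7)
The plan is to follow the reduction used in the proof of Proposition \ref{p:12strongr} and then handle the rough symbol $\Om$ via a dyadic level-set decomposition. First, using the mean value formula
$$\fr{A_i(x)-A_i(y)}{|x-y|}=\int_0^1\Big\langle\fr{x-y}{|x-y|},\nabla A_i(sx+(1-s)y)\Big\rangle ds,$$
I would expand $\C_\Om[\nabla A_1,\cdots,\nabla A_n,f](x)$ as a finite linear combination of operators
$$T_{\Om,\alp}[a_1,\cdots,a_n,f](x)=\pv\int_{\R^d}\fr{\Om(x-y)}{|x-y|^d}\Big(\fr{x-y}{|x-y|}\Big)^{\alp}\Big(\prod_{i=1}^n m_{x,y}a_i\Big)f(y)\,dy,$$
indexed by multi-indices $\alp\in\Z_+^d$ with $|\alp|=n$, where $a_i=\pari^{\alp_i}A_i$, $\alp_1+\cdots+\alp_n=\alp$ and $m_{x,y}a=\int_0^1 a(sx+(1-s)y)\,ds$. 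The parity hypothesis \eqref{e:12Omho} together with $|\alp|=n$ forces $\int_{\S^{d-1}}\Om(\tet)\tet^{\alp}\,d\si(\tet)=0$, so each $k_\alp(x):=(\Om(x)/|x|^d)(x/|x|)^{\alp}$ is a rough standard Calder\'on--Zygmund convolution kernel satisfying the zero-mean condition over annuli.

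Second, to accommodate the roughness of $\Om$, I would decompose $\Om=\sum_{k\geq 0}\Om_k$ along dyadic level sets of $|\Om|$, with $\Om_k$ essentially equal to $\Om\cdot\chi_{\{2^k\le|\Om|<2^{k+1}\}}$ for $k\geq 1$ and $\Om_0$ absorbing the piece $|\Om|<2$. The level sets $\{2^k\le|\Om|<2^{k+1}\}$ are invariant under $\tet\mapsto -\tet$, so each $\Om_k$ inherits the parity from \eqref{e:12Omho} and each $k_{\alp,k}(x):=(\Om_k(x)/|x|^d)(x/|x|)^{\alp}$ is again admissible. The $L\log^+L$ hypothesis translates into the summability
$$\sum_{k\geq 0}(k+1)\|\Om_k\|_{L^1(\S^{d-1})}\lc\|\Om\|_{L\log^+L(\S^{d-1})},$$
which is the arithmetic source of the final bound.

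Third, for each fixed $k$, I would invoke the Seeger--Smart--Street multilinear Christ--Journ\'e estimate from \cite{SSS15} (the engine behind Proposition \ref{p:12strongr}) applied to $T_{\Om_k,\alp}$. The goal is to produce an $L^{q_1}\times\cdots\times L^{q_n}\times L^p\to L^r$ bound of size at most $C(k+1)\|\Om_k\|_{L^1(\S^{d-1})}$, so that summing in $k$ and using the display above closes the argument. The main obstacle lies precisely here: the bound from \cite{SSS15} is stated for standard (smooth) Calder\'on--Zygmund kernels, so to keep the $k$-dependence linear one must combine the Christ--Journ\'e multilinear reduction with an angular/radial almost-orthogonality scheme, in the spirit of Fefferman's rotation approximation and the linear rough-kernel theory, along the lines already carried out in \cite{DL15a} for the first-order commutator. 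Once this logarithmic-type dependence on $k$ is secured, summation over $k$ and summation over the finitely many multi-indices $\alp$ yield Proposition \ref{p:12strongrom}.
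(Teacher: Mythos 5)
Your plan diverges from the paper's at the very first step, and the divergence is where the gap opens. The paper proves Proposition~\ref{p:12strongrom} by the method of rotation: after the change of variables $x-y=r\tet$ and using the parity of $\Om$ to symmetrize to $|r|>\eps$, the operator is written as an integral over $\tet\in\S^{d-1}$ of one-dimensional commutators $\C^1_{\eps,\tet}$ acting along the line through $x$ in direction $\tet$. Minkowski's inequality, a change of variables $x=s\tet+z$ with $z$ in the hyperplane $L(\tet)$, the one-dimensional case of Proposition~\ref{p:12strongr} (the $d=1$ instance of the Seeger--Smart--Street bound), and H\"older then give the estimate with constant $\|\Om\|_{L^1(\S^{d-1})}$. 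In particular, this proposition needs only $\Om\in L^1(\S^{d-1})$; the $L\log^+L$ hypothesis is reserved for the weak $(1,1)$ endpoint Proposition~\ref{p:12inftyweakrom}, which the paper handles separately via the linear result in~\cite{DL15a}.

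Your plan instead tries to apply the multilinear Christ--Journ\'e estimate of~\cite{SSS15} directly in dimension $d$, after a dyadic level-set decomposition $\Om=\sum_k\Om_k$. The parity computation you give for the vanishing of $\int_{\S^{d-1}}\Om(\tet)\tet^\alp\,d\si(\tet)$ is correct. But the crucial step, namely obtaining an $L^{q_1}\times\cdots\times L^{q_n}\times L^p\to L^r$ bound of size $C(k+1)\|\Om_k\|_{L^1(\S^{d-1})}$ for the rough piece $T_{\Om_k,\alp}$, is asserted rather than proved, and it is precisely the hard part: the kernels $k_{\alp,k}(x)=(\Om_k(x)/|x|^d)(x/|x|)^\alp$ have no pointwise H\"ormander regularity of the type~\eqref{e:12kr}, so the theorem of~\cite{SSS15} does not apply to them. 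You acknowledge this and gesture towards an almost-orthogonality/Fefferman-rotation hybrid in the style of~\cite{DL15a}, but~\cite{DL15a} is a linear weak $(1,1)$ criterion, not a multilinear strong-type machine, and there is no indication in~\cite{SSS15} that its proof survives replacing a smooth kernel by a merely bounded (and then only integrable) one with the requisite linear dependence on the dyadic index $k$. Without that lemma, the sum over $k$ never starts, and the proof does not close. Even if it did close, it would yield the strong-type bound only under $\Om\in L\log^+L$, which is strictly stronger than the $L^1$ the paper actually needs here.

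The lesson worth taking from the paper's argument is that the roughness of $\Om$ and the multilinear structure can be decoupled: freeze a direction $\tet$, pay $|\Om(\tet)|$ once, and run the entire multilinear machinery in one dimension where the kernel $1/r$ is perfectly smooth. This is exactly what the rotation method buys and what the direct higher-dimensional approach loses.
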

\begin{proof}
We use the method of rotation to prove our main result. The method is standard, so we will be brief. Applying the condition \eqref{e:12Omho} and making a change of variable $x-y=r\tet$, we get that
\Bes
\begin{split}
\C_{\Om,\eps}[\nabla A_1,\cdots,&\nabla A_n, f](x)=\int_{|x-y|>\eps}\fr{\Om(x-y)}{|x-y|^d}\Big(\prod_{i=1}^n\fr{A_{i}(x)-A_i(y)}{|x-y|}\Big)\cdot f(y)dy\\
&=\int_{\S^{d-1}}\Om(\tet)\Big[\int_{r>\eps}\fr{1}{r}\Big(\prod_{i=1}^n\fr{A_{i}(x)-A_i(x-r\tet)}{r}\Big)\cdot f(x-r\tet)dr\Big]d\si(\tet)\\
&=\fr{1}{2}\int_{\S^{d-1}}\Om(\tet)\Big[\int_{|r|>\eps}\fr{1}{r}\Big(\prod_{i=1}^n\fr{A_{i}(x)-A_i(x-r\tet)}{r}\Big)\cdot f(x-r\tet)dr\Big]d\si(\tet).
\end{split}
\Ees
For convenience, we set the integral in the square bracket as $\C^{1}_{\eps,\tet}[\nabla A_1,\cdots, \nabla A_n, f](x)$. Now applying the Minkowski inequality, making a change of variable $x=s\tet+z$ with $s\in\R$ and $z\in L(\tet)$ ($L(\tet)$ is the hyperplane which is perpendicular to $\tet$), utilizing Proposition \ref{p:12strongr} with dimension one and the H\"older inequality, we finally get that
\Bes
\begin{split}
\|\C_\Om&[\nabla A_1,\cdots,\nabla A_n, f]\|_{L^r(\R^d)}\\
&\lc\int_{\S^{d-1}}|\Om(\tet)|\Big[\int_{L(\tet)}\int_{\R}|\C_{\eps,\tet}^1[\nabla A_1,\cdots, \nabla A_n, f](s\tet+z)|^rdsdz\Big]^{\fr{1}{r}}d\si(\tet)\\
&\lc\int_{\S^{d-1}}|\Om(\tet)|\Big[\int_{L(\tet)}\Big(\prod_{i=1}^n\|\nabla A_i(\cdot\tet+z)\|^r_{L^{q_i}(\R)}\Big)\|f(\cdot\tet+z)\|^r_{L^p(\R)}dz\Big]^{\fr{1}{r}}d\si(\tet)\\
&\lc \|\Om\|_{L^1(\S^{d-1})}\Big(\prod_{i=1}^n\|\nabla A_i\|_{L^{q_i}(\R^d)}\Big)\|f\|_{L^p(\R^d)},
\end{split}
\Ees
which ends the proof.
\end{proof}
\begin{remark}\label{r:12cancelation}
Proposition \ref{p:12strongrom} may also hold if we consider that $\Om$ is homogenous of order zero and satisfies
\begin{equation}\label{e:12moment condc}
\Om(-\tet)=(-1)^{n}\Om(\tet);\ \ \int_{\S^{d-1}}\Om(\tet)\tet^\alpha d\tet=0,\quad \text{for all}\ \ \alpha\in \Z_+^d\ \ \text{with}\ \ |\alp|=n;
\end{equation}
and $\Om\in L\log^+L(\S^{d-1})$. In such a case, then method of rotation can not be applies directly. One may need to insert $I=\sum_{j=1}^dR_j^2$ into the kernel of Calder\'on commutator where $R_j$ is the Riesz transform. Since the commutator is a non convolution operator, there are some tail terms that should be dealt carefully.
We don't pursue these matters in the present paper. For those reader who are interested in this case, we refer to see the very detailed discussion by B. Bajsanski and R. Coifman \cite{BC67}, which may also work here.
\end{remark}
\begin{prop}\label{p:12inftyweakrom}
Let $r=1$, $q_1=\cdots=q_n=\infty$, $p=1$. Suppose $\Om\in L\log^+L(\S^{d-1})$. Then
\Bes
\|\C_\Om[\nabla A_1,\cdots,\nabla A_n, f]\|_{L^{1,\infty}(\R^d)}\lc C_\Om\Big(\prod_{i=1}^n\|\nabla A_i\|_{L^{\infty}(\R^d)}\Big)\|f\|_{L^1(\R^d)}.
\Ees
\end{prop}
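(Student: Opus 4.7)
The plan is to fix all the Lipschitz functions $A_1,\cdots,A_n$ (this is legitimate since $q_1=\cdots=q_n=\infty$) and treat $\C_\Om[\nabla A_1,\cdots,\nabla A_n,\cdot]$ as a linear operator $T$ in $f$ with kernel
\[
K_\Om(x,y)=\frac{\Om(x-y)}{|x-y|^d}\prod_{i=1}^n\frac{A_i(x)-A_i(y)}{|x-y|}.
\]
This kernel satisfies the size bound $|K_\Om(x,y)|\lc |\Om(x-y)||x-y|^{-d}\prod_{i=1}^n\|\nabla A_i\|_{L^\infty(\R^d)}$. By Proposition \ref{p:12strongrom} applied with $q_1=\cdots=q_n=\infty$ and $p=2$, the operator $T$ is bounded on $L^2(\R^d)$ with operator norm $\lc C_\Om\prod_{i=1}^n\|\nabla A_i\|_{L^\infty(\R^d)}$. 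After a scaling that reduces to $\|\nabla A_i\|_{L^\infty(\R^d)}=1$ and $\|f\|_{L^1(\R^d)}=1$, the task is to prove $|\{x:|Tf(x)|>\lam\}|\lc C_\Om/\lam$.

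I would apply the standard Calder\'on-Zygmund decomposition to $f$ at level $\lam$, writing $f=g+\sum_Q b_Q$, where the bad parts $b_Q$ are supported on disjoint dyadic cubes $Q$ with $\int b_Q=0$, $\|b_Q\|_{L^1(\R^d)}\lc \lam|Q|$ and $\sum_Q|Q|\lc 1/\lam$. The good part is handled by Chebyshev together with the $L^2$-bound: $|\{|Tg|>\lam/2\}|\lc \lam^{-2}\|Tg\|_{L^2(\R^d)}^2\lc C_\Om^2 \lam^{-2}\|g\|_{L^2(\R^d)}^2\lc C_\Om^2/\lam$. Let $\Om^*=\bigcup_Q 2Q$, so $|\Om^*|\lc 1/\lam$, and it remains to control $\{x\notin\Om^*:|Tb(x)|>\lam/2\}$. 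If $\Om$ satisfied a Dini-type regularity one could invoke the usual H\"ormander condition $\int_{x\notin 2Q}|K_\Om(x,y)-K_\Om(x,y_Q)|\,dx\lc 1$ uniformly in $y\in Q$, and the argument would close as in the proof of Proposition \ref{p:12inftyweak}. Since $\Om$ is merely $L\log^+L(\S^{d-1})$, this kernel regularity fails outright.

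The main obstacle is therefore the bad part, which is exactly the difficulty that occurs already for the single rough singular integral $f\mapsto\int\Om(x-y)|x-y|^{-d}f(y)\,dy$. The plan is to exploit the mean-value representation
\[
\frac{A_i(x)-A_i(y)}{|x-y|}=\int_0^1\Big\langle\frac{x-y}{|x-y|},\nabla A_i(sx+(1-s)y)\Big\rangle ds
\]
to rewrite $K_\Om(x,y)$ as a finite linear combination, indexed by multi-indices $\alp\in\Z_+^d$ with $|\alp|=n$, of kernels of the form $\Om(x-y)(x-y)^\alp|x-y|^{-d-n}$ times a bounded multiplier built from $\nabla A_1,\cdots,\nabla A_n$ (of $L^\infty$ norm at most $1$). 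Each factor $\Om(\tet)\tet^\alp/|\tet|^n$ is itself homogeneous of degree zero and lies in $L\log^+L(\S^{d-1})$, so the rough-kernel machinery of Christ, Rubio de Francia and Seeger applies to the resulting singular integrals. The remaining task is then to absorb the bounded Lipschitz-built multiplier into Seeger's decomposition of $\Om$ into dyadic smoother pieces; since the multipliers contribute only bounded factors and do not break the crucial Littlewood-Paley/microlocal orthogonality used to sum over scales, the argument can be pushed through. This is the same strategy carried out by Ding and the author in \cite{DL15a} for $n=1$, and the multilinear generalization requires only that one keep track of the extra bounded factors throughout Seeger's scheme.

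In summary, the proof reduces to (i) an $L^2$ bound which is immediate from Proposition \ref{p:12strongrom}, and (ii) a weak $(1,1)$ bound for a family of rough singular integrals with bounded multipliers, for which the main technical input is Seeger's decomposition of an $L\log^+L$ kernel into smoother pieces; the Lipschitz structure of the $A_i$ contributes only harmless bounded factors.
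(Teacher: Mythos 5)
Your overall framework is right — fix the Lipschitz functions $A_i$, treat $\C_\Om[\nabla A_1,\cdots,\nabla A_n,\cdot]$ as a linear operator in $f$, get the $L^2$ bound from Proposition~\ref{p:12strongrom}, and then go for a weak~$(1,1)$ estimate using Seeger-type technology for $L\log^+L$ kernels, ultimately pointing to \cite{DL15a}. That is indeed the intended approach. However, your execution takes a detour that the paper avoids and that, as written, leaves a genuine gap.

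The paper's proof is much shorter: once the $A_i$ are fixed, the factor
\[
\frac{1}{|x-y|^d}\prod_{i=1}^n\frac{A_i(x)-A_i(y)}{|x-y|}
\]
is itself already a standard Calder\'on--Zygmund kernel with constant $\prod_i\|\nabla A_i\|_{L^\infty}$, exactly as verified in the proof of Proposition~\ref{p:12inftyweak}. The full kernel is then $\Om(x-y)$ times this standard CZ kernel, and the result follows \emph{immediately} from the criterion \cite[Theorem~1.1]{DL15a}, which is stated precisely for a rough angular factor $\Om\in L\log^+L(\S^{d-1})$ multiplying a standard CZ kernel, with $L^2$ boundedness as a hypothesis. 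There is no need to distinguish $n=1$ from $n\geq2$, no mean-value expansion, and no fresh Littlewood--Paley analysis. Your assertion that the ``multilinear generalization requires only that one keep track of the extra bounded factors throughout Seeger's scheme'' misses that the criterion already absorbs the Lipschitz factor as a standard kernel.

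The detour you propose also creates a real obstruction rather than removing one. Expanding each $\frac{A_i(x)-A_i(y)}{|x-y|}$ via the mean-value formula does not produce a pointwise bounded multiplier that can be factored out; it produces Christ--Journ\'e averages $m_{x,y}(\partial_j A_i)=\int_0^1\partial_j A_i(sx+(1-s)y)\,ds$, which depend jointly on $(x,y)$ and do not commute with the frequency decomposition underlying the Christ--Rubio de Francia--Seeger argument. Your claim that they ``do not break the crucial Littlewood-Paley/microlocal orthogonality'' is exactly the nontrivial point — it is the technical content of \cite{DL15a} (and, in a different regime, of \cite{SSS15}) — and cannot be waved through. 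The irony is that the Christ--Journ\'e averages are harder to handle than the original product of difference quotients, which already satisfies the required H\"older regularity in both variables. So: right strategy, right reference, but the mean-value expansion step is both unnecessary and, without further argument, a gap.
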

\begin{proof}
When $q_1=\cdots=q_n=\infty$, $A_i$ is a Lipschitz function for $i=1,\cdots,n$. Fix all $A_i$. We may regard $\C_\Om[\nabla A_1,\cdots,\nabla A_n, f](x)$ as a linear function of $f$. Then the kernel
$$\fr{1}{|x-y|^d}\Big(\prod_{i=1}^n\fr{A_i(x)-A_i(y)}{|x-y|}\Big)$$
is a standard Calder\'on-Zygmund kernel (see \eg \cite[Page 211, Definition 4.1.2]{Gra250}). Then by the recent result of Ding and the author \cite[Theorem 1.1]{DL15a}, one immediately finish the proof of Proposition \ref{p:12inftyweakrom}.
\end{proof}

After establishing Proposition \ref{p:12strongrom} and Proposition \ref{p:12inftyweakrom}, one can get the rest of the proof of Theorem \ref{t:12r} by using the similar way in the proof of Theorem \ref{t:12}.
One can check the proof step-by-step in which the applications of Proposition \ref{p:12strongr} and Proposition \ref{p:12inftyweak} in Propositions \ref{p:12qleqinfty}, \ref{p:12qleqdall}, \ref{e:12qleqgeqinfty}, \ref{p:12qgeqleqd}, \ref{p:12qgeqleqdinfty} are just replaced by Proposition \ref{p:12strongrom} and Proposition \ref{p:12inftyweakrom}. There is only one thing that we should be careful.
When giving a explicit estimate of $K(x-y)$ in Propositions \ref{p:12qleqinfty}, \ref{p:12qleqdall}, \ref{e:12qleqgeqinfty}, \ref{p:12qgeqleqd}, \ref{p:12qgeqleqdinfty}, we just use the boundedness condition \eqref{e:12kb}: $|K(x)|\lc|x|^{-d}$,  and then apply Lemma \ref{l:12disq1infty} in a special case $|\Om|\equiv1$ to get the required bound. While in the rest of the proof of Theorem \ref{t:12r}, the above arguments are replaced by $|K(x)|\leq\fr{|\Om(x)|}{|x|^d}$ and apply Lemma \ref{l:12disq1infty} with a rough kernel $\Om$.
The verification of the details of this proof is omitted.
Finally, it is easy to see that those examples in Proposition \ref{p:12exa} also work here.

\vskip0.24cm

\subsection*{Acknowledgements}
The author would like to thank Andreas Seeger for suggesting to consider the Lorentz space $L^{d,1}(\R^d)$ endpoint estimate of the Mary Weiss maximal operator $\M$,
thank Xiaohua Yao for pointing out the $n$-th order commutator in Corollary \ref{c:12c} falls into the scope of Theorem \ref{t:12},
and finally thank the referees for their very careful reading and valuable suggestions.
\vskip1cm

\bibliographystyle{amsplain}

\end{document}